
\documentclass[reqno,12pt,final]{amsart}
\usepackage{eurosym}
\usepackage{amsmath,amsthm,amsfonts,amssymb}
\usepackage[english]{babel}
\usepackage{graphicx}
\usepackage{enumerate}

\setcounter{MaxMatrixCols}{10}

\setlength{\textwidth}{16truecm} \setlength{\textheight}{25truecm}
\setlength{\oddsidemargin}{0pt} \setlength{\oddsidemargin}{0cm}
\setlength{\evensidemargin}{0cm} \setlength{\topmargin}{-45pt}

\newtheorem{theorem}{Theorem}
\newtheorem{lemma}{Lemma}
\newtheorem{remark}{Remark}
\newtheorem{corollary}{Corollary}
\newtheorem{proposition}{Proposition}
\allowdisplaybreaks
\typeout{TCILATEX Macros for Scientific Word 3.0 <19 May 1997>.}
\typeout{NOTICE:  This macro file is NOT proprietary and may be 
freely copied and distributed.}
\makeatletter
%
\newcount\@hour\newcount\@minute\chardef\@x10\chardef\@xv60
\def\tcitime{
\def\@time{%
  \@minute\time\@hour\@minute\divide\@hour\@xv
  \ifnum\@hour<\@x 0\fi\the\@hour:%
  \multiply\@hour\@xv\advance\@minute-\@hour
  \ifnum\@minute<\@x 0\fi\the\@minute
  }}%

\@ifundefined{hyperref}{}{}

\@ifundefined{qExtProgCall}{\def\qExtProgCall#1#2#3#4#5#6{\relax}}{}
%
%
%
%
\def\QCTOpt[#1]#2{%
  \def\QCTOptB{#1}
  \def\QCTOptA{#2}
}
\def\QCTNOpt#1{%
  \def\QCTOptA{#1}
  \let\QCTOptB\empty
}
\def\Qct{%
  \@ifnextchar[{%
    \QCTOpt}{\QCTNOpt}
}
\def\QCBOpt[#1]#2{%
  \def\QCBOptB{#1}
  \def\QCBOptA{#2}
}
\def\QCBNOpt#1{%
  \def\QCBOptA{#1}
  \let\QCBOptB\empty
}
\def\Qcb{%
  \@ifnextchar[{%
    \QCBOpt}{\QCBNOpt}
}
\def\PrepCapArgs{%
  \ifx\QCBOptA\empty
    \ifx\QCTOptA\empty
      {}%
    \else
      \ifx\QCTOptB\empty
        {\QCTOptA}%
      \else
        [\QCTOptB]{\QCTOptA}%
      \fi
    \fi
  \else
    \ifx\QCBOptA\empty
      {}%
    \else
      \ifx\QCBOptB\empty
        {\QCBOptA}%
      \else
        [\QCBOptB]{\QCBOptA}%
      \fi
    \fi
  \fi
}
\newcount\GRAPHICSTYPE
\GRAPHICSTYPE=\z@
\def\GRAPHICSPS#1{%
 \ifcase\GRAPHICSTYPE
   \special{ps: #1}%
 \or
   \special{language "PS", include "#1"}%
 \fi
}%
%
%
%
\def\graffile#1#2#3#4{%
    \bgroup
    \leavevmode
    \@ifundefined{bbl@deactivate}{\def~{\string~}}{\activesoff}
    \raise -#4 \BOXTHEFRAME{%
        \hbox to #2{\raise #3\hbox to #2{\null #1\hfil}}}%
    \egroup
}%
%
\def\draftbox#1#2#3#4{%
 \leavevmode\raise -#4 \hbox{%
  \frame{\rlap{\protect\tiny #1}\hbox to #2%
   {\vrule height#3 width\z@ depth\z@\hfil}%
  }%
 }%
}%
\newcount\draft
\draft=\z@

\newif\ifwasdraft
\wasdraftfalse

\def\GRAPHIC#1#2#3#4#5{%
 \ifnum\draft=\@ne\draftbox{#2}{#3}{#4}{#5}%
  \else\graffile{#1}{#3}{#4}{#5}%
  \fi
 }%
\def\addtoLaTeXparams#1{%
    \edef\LaTeXparams{\LaTeXparams #1}}%
%

\newif\ifBoxFrame \BoxFramefalse
\newif\ifOverFrame \OverFramefalse
\newif\ifUnderFrame \UnderFramefalse

\def\BOXTHEFRAME#1{%
   \hbox{%
      \ifBoxFrame
         \frame{#1}%
      \else
         {#1}%
      \fi
   }%
}

\def\doFRAMEparams#1{\BoxFramefalse\OverFramefalse\UnderFramefalse\readFRAMEparams#1\end}%
\def\readFRAMEparams#1{%
 \ifx#1\end%
  \let\next=\relax
  \else
  \ifx#1i\dispkind=\z@\fi
  \ifx#1d\dispkind=\@ne\fi
  \ifx#1f\dispkind=\tw@\fi
  \ifx#1t\addtoLaTeXparams{t}\fi
  \ifx#1b\addtoLaTeXparams{b}\fi
  \ifx#1p\addtoLaTeXparams{p}\fi
  \ifx#1h\addtoLaTeXparams{h}\fi
  \ifx#1X\BoxFrametrue\fi
  \ifx#1O\OverFrametrue\fi
  \ifx#1U\UnderFrametrue\fi
  \ifx#1w
    \ifnum\draft=1\wasdrafttrue\else\wasdraftfalse\fi
    \draft=\@ne
  \fi
  \let\next=\readFRAMEparams
  \fi
 \next
 }%
%

\def\IFRAME#1#2#3#4#5#6{%
      \bgroup
      \let\QCTOptA\empty
      \let\QCTOptB\empty
      \let\QCBOptA\empty
      \let\QCBOptB\empty
      #6%
      \parindent=0pt%
      \leftskip=0pt
      \rightskip=0pt
      \setbox0 = \hbox{\QCBOptA}%
      \@tempdima = #1\relax
      \ifOverFrame
          \typeout{This is not implemented yet}%
          \show\HELP
      \else
         \ifdim\wd0>\@tempdima
            \advance\@tempdima by \@tempdima
            \ifdim\wd0 >\@tempdima
               \textwidth=\@tempdima
               \setbox1 =\vbox{%
                  \noindent\hbox to \@tempdima{\hfill\GRAPHIC{#5}{#4}{#1}{#2}{#3}\hfill}\\%
                  \noindent\hbox to \@tempdima{\parbox[b]{\@tempdima}{\QCBOptA}}%
               }%
               \wd1=\@tempdima
            \else
               \textwidth=\wd0
               \setbox1 =\vbox{%
                 \noindent\hbox to \wd0{\hfill\GRAPHIC{#5}{#4}{#1}{#2}{#3}\hfill}\\%
                 \noindent\hbox{\QCBOptA}%
               }%
               \wd1=\wd0
            \fi
         \else
            \ifdim\wd0>0pt
              \hsize=\@tempdima
              \setbox1 =\vbox{%
                \unskip\GRAPHIC{#5}{#4}{#1}{#2}{0pt}%
                \break
                \unskip\hbox to \@tempdima{\hfill \QCBOptA\hfill}%
              }%
              \wd1=\@tempdima
           \else
              \hsize=\@tempdima
              \setbox1 =\vbox{%
                \unskip\GRAPHIC{#5}{#4}{#1}{#2}{0pt}%
              }%
              \wd1=\@tempdima
           \fi
         \fi
         \@tempdimb=\ht1
         \advance\@tempdimb by \dp1
         \advance\@tempdimb by -#2%
         \advance\@tempdimb by #3%
         \leavevmode
         \raise -\@tempdimb \hbox{\box1}%
      \fi
      \egroup%
}%
%
\def\DFRAME#1#2#3#4#5{%
 \begin{center}
     \let\QCTOptA\empty
     \let\QCTOptB\empty
     \let\QCBOptA\empty
     \let\QCBOptB\empty
     \ifOverFrame 
        #5\QCTOptA\par
     \fi
     \GRAPHIC{#4}{#3}{#1}{#2}{\z@}
     \ifUnderFrame 
        \nobreak\par\nobreak#5\QCBOptA
     \fi
 \end{center}%
 }%
%
\def\FFRAME#1#2#3#4#5#6#7{%
 \begin{figure}[#1]%
  \let\QCTOptA\empty
  \let\QCTOptB\empty
  \let\QCBOptA\empty
  \let\QCBOptB\empty
  \ifOverFrame
    #4
    \ifx\QCTOptA\empty
    \else
      \ifx\QCTOptB\empty
        \caption{\QCTOptA}%
      \else
        \caption[\QCTOptB]{\QCTOptA}%
      \fi
    \fi
    \ifUnderFrame\else
      \label{#5}%
    \fi
  \else
    \UnderFrametrue%
  \fi
  \begin{center}\GRAPHIC{#7}{#6}{#2}{#3}{\z@}\end{center}%
  \ifUnderFrame
    #4
    \ifx\QCBOptA\empty
      \caption{}%
    \else
      \ifx\QCBOptB\empty
        \caption{\QCBOptA}%
      \else
        \caption[\QCBOptB]{\QCBOptA}%
      \fi
    \fi
    \label{#5}%
  \fi
  \end{figure}%
 }%
%
%
%
%
%
\newcount\dispkind%

\def\makeactives{
  \catcode`\"=\active
  \catcode`\;=\active
  \catcode`\:=\active
  \catcode`\'=\active
  \catcode`\~=\active
}
\bgroup
   \makeactives
   \gdef\activesoff{%
      \def"{\string"}
      \def;{\string;}
      \def:{\string:}
      \def'{\string'}
      \def~{\string~}
    }
\egroup

\def\FRAME#1#2#3#4#5#6#7#8{%
 \bgroup
 \ifnum\draft=\@ne
   \wasdrafttrue
 \else
   \wasdraftfalse%
 \fi
 \def\LaTeXparams{}%
 \dispkind=\z@
 \def\LaTeXparams{}%
 \doFRAMEparams{#1}%
 \ifnum\dispkind=\z@\IFRAME{#2}{#3}{#4}{#7}{#8}{#5}\else
  \ifnum\dispkind=\@ne\DFRAME{#2}{#3}{#7}{#8}{#5}\else
   \ifnum\dispkind=\tw@
    \edef\@tempa{\noexpand\FFRAME{\LaTeXparams}}%
    \@tempa{#2}{#3}{#5}{#6}{#7}{#8}%
    \fi
   \fi
  \fi
  \ifwasdraft\draft=1\else\draft=0\fi{}%
  \egroup
 }%
%

\def\TEXUX#1{"texux"}

%
%
%
%
%
%
%
%
%
%

%
\long\def\QQQ#1#2{%
     \long\expandafter\def\csname#1\endcsname{#2}}%
\@ifundefined{QTP}{\def\QTP#1{}}{}
\@ifundefined{QEXCLUDE}{\def\QEXCLUDE#1{}}{}
\@ifundefined{Qlb}{}{}
\@ifundefined{Qlt}{}{}
\long\def\QQA#1#2{}%
\def\QTR#1#2{{\csname#1\endcsname #2}}
\def\EXPAND#1[#2]#3{}%
\def\NOEXPAND#1[#2]#3{}%
\def\LaTeXparent#1{}%
\def\ChildStyles#1{}%
\def\ChildDefaults#1{}%
\def\QTagDef#1#2#3{}%

\@ifundefined{correctchoice}{}{}
\@ifundefined{HTML}{\def\HTML#1{\relax}}{}
\@ifundefined{TCIIcon}{\def\TCIIcon#1#2#3#4{\relax}}{}
\if@compatibility
  \typeout{Not defining UNICODE or CustomNote commands for LaTeX 2.09.}
\else
  \providecommand{\UNICODE}[2][]{}
  
\fi

%
\@ifundefined{StyleEditBeginDoc}{}{}
%
\def\QQfnmark#1{\footnotemark}

%
%
\@ifundefined{TCIMAKEINDEX}{}{\makeindex}%
%
\@ifundefined{abstract}{%
 \def\abstract{%
  \if@twocolumn
   \section*{Abstract (Not appropriate in this style!)}%
   \else \small 
   \begin{center}{\bf Abstract\vspace{-.5em}\vspace{\z@}}\end{center}%
   \quotation 
   \fi
  }%
 }{%
 }%
\@ifundefined{endabstract}{\def\endabstract
  {\if@twocolumn\else\endquotation\fi}}{}%
\@ifundefined{maketitle}{\def\maketitle#1{}}{}%
\@ifundefined{affiliation}{\def\affiliation#1{}}{}%
\@ifundefined{proof}{}{}%
\@ifundefined{endproof}{}{}%
\@ifundefined{newfield}{\def\newfield#1#2{}}{}%
\@ifundefined{chapter}{\def\chapter#1{\par(Chapter head:)#1\par }%
 \newcount\c@chapter}{}%
\@ifundefined{part}{\def\part#1{\par(Part head:)#1\par }}{}%
\@ifundefined{section}{\def\section#1{\par(Section head:)#1\par }}{}%
\@ifundefined{subsection}{\def\subsection#1%
 {\par(Subsection head:)#1\par }}{}%
\@ifundefined{subsubsection}{\def\subsubsection#1%
 {\par(Subsubsection head:)#1\par }}{}%
\@ifundefined{paragraph}{\def\paragraph#1%
 {\par(Subsubsubsection head:)#1\par }}{}%
\@ifundefined{subparagraph}{\def\subparagraph#1%
 {\par(Subsubsubsubsection head:)#1\par }}{}%
\@ifundefined{therefore}{}{}%
\@ifundefined{backepsilon}{}{}%
\@ifundefined{yen}{}{}%
\@ifundefined{registered}{%
   \def\registered{\relax\ifmmode{}\r@gistered
                    \else$\m@th\r@gistered$\fi}%
 \def\r@gistered{^{\ooalign
  {\hfil\raise.07ex\hbox{$\scriptstyle\rm\text{R}$}\hfil\crcr
  \mathhexbox20D}}}}{}%
\@ifundefined{Eth}{}{}%
\@ifundefined{eth}{}{}%
\@ifundefined{Thorn}{}{}%
\@ifundefined{thorn}{}{}%
%
\@ifundefined{degree}{}{}%
%
\newdimen\theight
\def\Column{%
 \vadjust{\setbox\z@=\hbox{\scriptsize\quad\quad tcol}%
  \theight=\ht\z@\advance\theight by \dp\z@\advance\theight by \lineskip
  \kern -\theight \vbox to \theight{%
   \rightline{\rlap{\box\z@}}%
   \vss
   }%
  }%
 }%
\def\qed{%
 \ifhmode\unskip\nobreak\fi\ifmmode\ifinner\else\hskip5\p@\fi\fi
 \hbox{\hskip5\p@\vrule width4\p@ height6\p@ depth1.5\p@\hskip\p@}%
 }%
\def\miss{\hbox{\vrule height2\p@ width 2\p@ depth\z@}}%
%
%
\def\tcol#1{{\baselineskip=6\p@ \vcenter{#1}} \Column}  %
%
%
\@ifundefined{note}{}{}%

\def\newfmtname{LaTeX2e}

\ifx\fmtname\newfmtname
  \DeclareOldFontCommand{\rm}{\normalfont\rmfamily}{\mathrm}
  \DeclareOldFontCommand{\sf}{\normalfont\sffamily}{\mathsf}
  \DeclareOldFontCommand{\tt}{\normalfont\ttfamily}{\mathtt}
  \DeclareOldFontCommand{\bf}{\normalfont\bfseries}{\mathbf}
  \DeclareOldFontCommand{\it}{\normalfont\itshape}{\mathit}
  \DeclareOldFontCommand{\sl}{\normalfont\slshape}{\@nomath\sl}
  \DeclareOldFontCommand{\sc}{\normalfont\scshape}{\@nomath\sc}
\fi

%

\def\alpha{{\Greekmath 010B}}%
\def\beta{{\Greekmath 010C}}%
\def\gamma{{\Greekmath 010D}}%
\def\delta{{\Greekmath 010E}}%
\def\epsilon{{\Greekmath 010F}}%
\def\zeta{{\Greekmath 0110}}%
\def\eta{{\Greekmath 0111}}%
\def\theta{{\Greekmath 0112}}%
\def\iota{{\Greekmath 0113}}%
\def\kappa{{\Greekmath 0114}}%
\def\lambda{{\Greekmath 0115}}%
\def\mu{{\Greekmath 0116}}%
\def\nu{{\Greekmath 0117}}%
\def\xi{{\Greekmath 0118}}%
\def\pi{{\Greekmath 0119}}%
\def\rho{{\Greekmath 011A}}%
\def\sigma{{\Greekmath 011B}}%
\def\tau{{\Greekmath 011C}}%
\def\upsilon{{\Greekmath 011D}}%
\def\phi{{\Greekmath 011E}}%
\def\chi{{\Greekmath 011F}}%
\def\psi{{\Greekmath 0120}}%
\def\omega{{\Greekmath 0121}}%
\def\varepsilon{{\Greekmath 0122}}%
\def\vartheta{{\Greekmath 0123}}%
\def\varpi{{\Greekmath 0124}}%
\def\varrho{{\Greekmath 0125}}%
\def\varsigma{{\Greekmath 0126}}%
\def\varphi{{\Greekmath 0127}}%

\def\nabla{{\Greekmath 0272}}
\def\FindBoldGroup{%
   {\setbox0=\hbox{$\mathbf{x\global\edef\theboldgroup{\the\mathgroup}}$}}%
}

\def\Greekmath#1#2#3#4{%
    \if@compatibility
        \ifnum\mathgroup=\symbold
           \mathchoice{\mbox{\boldmath$\displaystyle\mathchar"#1#2#3#4$}}%
                      {\mbox{\boldmath$\textstyle\mathchar"#1#2#3#4$}}%
                      {\mbox{\boldmath$\scriptstyle\mathchar"#1#2#3#4$}}%
                      {\mbox{\boldmath$\scriptscriptstyle\mathchar"#1#2#3#4$}}%
        \else
           \mathchar"#1#2#3#4%
        \fi 
    \else 
        \FindBoldGroup
        \ifnum\mathgroup=\theboldgroup 
           \mathchoice{\mbox{\boldmath$\displaystyle\mathchar"#1#2#3#4$}}%
                      {\mbox{\boldmath$\textstyle\mathchar"#1#2#3#4$}}%
                      {\mbox{\boldmath$\scriptstyle\mathchar"#1#2#3#4$}}%
                      {\mbox{\boldmath$\scriptscriptstyle\mathchar"#1#2#3#4$}}%
        \else
           \mathchar"#1#2#3#4%
        \fi     	    
	  \fi}

\newif\ifGreekBold  \GreekBoldfalse
\let\SAVEPBF=\pbf
\def\pbf{\GreekBoldtrue\SAVEPBF}%

\@ifundefined{theorem}{\newtheorem{theorem}{Theorem}}{}
\@ifundefined{lemma}{\newtheorem{lemma}[theorem]{Lemma}}{}
\@ifundefined{corollary}{}{}
\@ifundefined{conjecture}{}{}
\@ifundefined{proposition}{}{}
\@ifundefined{axiom}{}{}
\@ifundefined{remark}{\newtheorem{remark}{Remark}}{}
\@ifundefined{example}{}{}
\@ifundefined{exercise}{}{}
\@ifundefined{definition}{}{}

\@ifundefined{mathletters}{%
  \newcounter{equationnumber}  
  \def\mathletters{%
     \addtocounter{equation}{1}
     \edef\@currentlabel{\theequation}%
     \setcounter{equationnumber}{\c@equation}
     \setcounter{equation}{0}%
     \edef\theequation{\@currentlabel\noexpand\alph{equation}}%
  }
  
}{}

\@ifundefined{BibTeX}{%
    \def\BibTeX{{\rm B\kern-.05em{\sc i\kern-.025em b}\kern-.08em
                 T\kern-.1667em\lower.7ex\hbox{E}\kern-.125emX}}}{}%
\@ifundefined{AmS}%
    {\def\AmS{{\protect\usefont{OMS}{cmsy}{m}{n}%
                A\kern-.1667em\lower.5ex\hbox{M}\kern-.125emS}}}{}%
\@ifundefined{AmSTeX}{}{}%
%

\def\@@eqncr{\let\@tempa\relax
    \ifcase\@eqcnt \def\@tempa{& & &}\or \def\@tempa{& &}%
      \else \def\@tempa{&}\fi
     \@tempa
     \if@eqnsw
        \iftag@
           \@taggnum
        \else
           \@eqnnum\stepcounter{equation}%
        \fi
     \fi
     \global\tag@false
     \global\@eqnswtrue
     \global\@eqcnt\z@\cr}

\def\TCItag{\@ifnextchar*{\@TCItagstar}{\@TCItag}}
\def\@TCItag#1{%
    \global\tag@true
    \global\def\@taggnum{(#1)}}
\def\@TCItagstar*#1{%
    \global\tag@true
    \global\def\@taggnum{#1}}
%
%
%
%
%
%
%
%
%
%
%
%
%
%
%
%
%
%
%
%
%
%
%
%
%
%
%
%
%
%
%
%
%
%
%
%
%
%
%
%
%
%
%
%
%
%
%
%
%
%
%
%
%
%
%
%
%
%
%
%
%
%
%

%
%
\ifx\ds@amstex\relax
   \message{amstex already loaded}\makeatother 
\else
   \@ifpackageloaded{amsmath}%
      {\message{amsmath already loaded}\makeatother }
      {}
   \@ifpackageloaded{amstex}%
      {\message{amstex already loaded}\makeatother }
      {}
   \@ifpackageloaded{amsgen}%
      {\message{amsgen already loaded}\makeatother }
      {}
\fi
%
%
%
%
\let\DOTSI\relax
\def\RIfM@{\relax\ifmmode}%
\def\FN@{\futurelet\next}%
\newcount\intno@
\def\iint{\DOTSI\intno@\tw@\FN@\ints@}%
\def\iiint{\DOTSI\intno@\thr@@\FN@\ints@}%
\def\iiiint{\DOTSI\intno@4 \FN@\ints@}%
\def\idotsint{\DOTSI\intno@\z@\FN@\ints@}%
\def\ints@{\findlimits@\ints@@}%
\newif\iflimtoken@
\newif\iflimits@
\def\findlimits@{\limtoken@true\ifx\next\limits\limits@true
 \else\ifx\next\nolimits\limits@false\else
 \limtoken@false\ifx\ilimits@\nolimits\limits@false\else
 \ifinner\limits@false\else\limits@true\fi\fi\fi\fi}%
\def\multint@{\int\ifnum\intno@=\z@\intdots@                          
 \else\intkern@\fi                                                    
 \ifnum\intno@>\tw@\int\intkern@\fi                                   
 \ifnum\intno@>\thr@@\int\intkern@\fi                                 
 \int}
\def\multintlimits@{\intop\ifnum\intno@=\z@\intdots@\else\intkern@\fi
 \ifnum\intno@>\tw@\intop\intkern@\fi
 \ifnum\intno@>\thr@@\intop\intkern@\fi\intop}%
\def\intic@{%
    \mathchoice{\hskip.5em}{\hskip.4em}{\hskip.4em}{\hskip.4em}}%
\def\negintic@{\mathchoice
 {\hskip-.5em}{\hskip-.4em}{\hskip-.4em}{\hskip-.4em}}%
\def\ints@@{\iflimtoken@                                              
 \def\ints@@@{\iflimits@\negintic@
   \mathop{\intic@\multintlimits@}\limits                             
  \else\multint@\nolimits\fi                                          
  \eat@}
 \else                                                                
 \def\ints@@@{\iflimits@\negintic@
  \mathop{\intic@\multintlimits@}\limits\else
  \multint@\nolimits\fi}\fi\ints@@@}%
\def\intkern@{\mathchoice{\!\!\!}{\!\!}{\!\!}{\!\!}}%
\def\plaincdots@{\mathinner{\cdotp\cdotp\cdotp}}%
\def\intdots@{\mathchoice{\plaincdots@}%
 {{\cdotp}\mkern1.5mu{\cdotp}\mkern1.5mu{\cdotp}}%
 {{\cdotp}\mkern1mu{\cdotp}\mkern1mu{\cdotp}}%
 {{\cdotp}\mkern1mu{\cdotp}\mkern1mu{\cdotp}}}%
%
%
%
\def\RIfM@{\relax\protect\ifmmode}
\def\text{\RIfM@\expandafter\text@\else\expandafter\mbox\fi}
\let\nfss@text\text
\def\text@#1{\mathchoice
   {\textdef@\displaystyle\f@size{#1}}%
   {\textdef@\textstyle\tf@size{\firstchoice@false #1}}%
   {\textdef@\textstyle\sf@size{\firstchoice@false #1}}%
   {\textdef@\textstyle \ssf@size{\firstchoice@false #1}}%
   \glb@settings}

\def\textdef@#1#2#3{\hbox{{%
                    \everymath{#1}%
                    \let\f@size#2\selectfont
                    #3}}}
\newif\iffirstchoice@
\firstchoice@true
%
%
\def\Let@{\relax\iffalse{\fi\let\\=\cr\iffalse}\fi}%
\def\vspace@{\def\vspace##1{\crcr\noalign{\vskip##1\relax}}}%
\def\multilimits@{\bgroup\vspace@\Let@
 \baselineskip\fontdimen10 \scriptfont\tw@
 \advance\baselineskip\fontdimen12 \scriptfont\tw@
 \lineskip\thr@@\fontdimen8 \scriptfont\thr@@
 \lineskiplimit\lineskip
 \vbox\bgroup\ialign\bgroup\hfil$\m@th\scriptstyle{##}$\hfil\crcr}%
\def\Sb{_\multilimits@}%
\def\endSb{\crcr\egroup\egroup\egroup}%
\def\Sp{^\multilimits@}%

%
%
%
\newdimen\ex@
\ex@.2326ex
\def\rightarrowfill@#1{$#1\m@th\mathord-\mkern-6mu\cleaders
 \hbox{$#1\mkern-2mu\mathord-\mkern-2mu$}\hfill
 \mkern-6mu\mathord\rightarrow$}%
\def\leftarrowfill@#1{$#1\m@th\mathord\leftarrow\mkern-6mu\cleaders
 \hbox{$#1\mkern-2mu\mathord-\mkern-2mu$}\hfill\mkern-6mu\mathord-$}%
\def\leftrightarrowfill@#1{$#1\m@th\mathord\leftarrow
\mkern-6mu\cleaders
 \hbox{$#1\mkern-2mu\mathord-\mkern-2mu$}\hfill
 \mkern-6mu\mathord\rightarrow$}%
\def\overrightarrow{\mathpalette\overrightarrow@}%
\def\overrightarrow@#1#2{\vbox{\ialign{##\crcr\rightarrowfill@#1\crcr
 \noalign{\kern-\ex@\nointerlineskip}$\m@th\hfil#1#2\hfil$\crcr}}}%

\def\overleftarrow{\mathpalette\overleftarrow@}%
\def\overleftarrow@#1#2{\vbox{\ialign{##\crcr\leftarrowfill@#1\crcr
 \noalign{\kern-\ex@\nointerlineskip}$\m@th\hfil#1#2\hfil$\crcr}}}%
\def\overleftrightarrow{\mathpalette\overleftrightarrow@}%
\def\overleftrightarrow@#1#2{\vbox{\ialign{##\crcr
   \leftrightarrowfill@#1\crcr
 \noalign{\kern-\ex@\nointerlineskip}$\m@th\hfil#1#2\hfil$\crcr}}}%
\def\underrightarrow{\mathpalette\underrightarrow@}%
\def\underrightarrow@#1#2{\vtop{\ialign{##\crcr$\m@th\hfil#1#2\hfil
  $\crcr\noalign{\nointerlineskip}\rightarrowfill@#1\crcr}}}%

\def\underleftarrow{\mathpalette\underleftarrow@}%
\def\underleftarrow@#1#2{\vtop{\ialign{##\crcr$\m@th\hfil#1#2\hfil
  $\crcr\noalign{\nointerlineskip}\leftarrowfill@#1\crcr}}}%
\def\underleftrightarrow{\mathpalette\underleftrightarrow@}%
\def\underleftrightarrow@#1#2{\vtop{\ialign{##\crcr$\m@th
  \hfil#1#2\hfil$\crcr
 \noalign{\nointerlineskip}\leftrightarrowfill@#1\crcr}}}%

\def\qopnamewl@#1{\mathop{\operator@font#1}\nlimits@}
\let\nlimits@\displaylimits
\def\setboxz@h{\setbox\z@\hbox}

\def\varlim@#1#2{\mathop{\vtop{\ialign{##\crcr
 \hfil$#1\m@th\operator@font lim$\hfil\crcr
 \noalign{\nointerlineskip}#2#1\crcr
 \noalign{\nointerlineskip\kern-\ex@}\crcr}}}}

 \def\rightarrowfill@#1{\m@th\setboxz@h{$#1-$}\ht\z@\z@
  $#1\copy\z@\mkern-6mu\cleaders
  \hbox{$#1\mkern-2mu\box\z@\mkern-2mu$}\hfill
  \mkern-6mu\mathord\rightarrow$}
\def\leftarrowfill@#1{\m@th\setboxz@h{$#1-$}\ht\z@\z@
  $#1\mathord\leftarrow\mkern-6mu\cleaders
  \hbox{$#1\mkern-2mu\copy\z@\mkern-2mu$}\hfill
  \mkern-6mu\box\z@$}

\def\projlim{\qopnamewl@{proj\,lim}}
\def\injlim{\qopnamewl@{inj\,lim}}
\def\varinjlim{\mathpalette\varlim@\rightarrowfill@}
\def\varprojlim{\mathpalette\varlim@\leftarrowfill@}
\def\varliminf{\mathpalette\varliminf@{}}
\def\varliminf@#1{\mathop{\underline{\vrule\@depth.2\ex@\@width\z@
   \hbox{$#1\m@th\operator@font lim$}}}}
\def\varlimsup{\mathpalette\varlimsup@{}}
\def\varlimsup@#1{\mathop{\overline
  {\hbox{$#1\m@th\operator@font lim$}}}}

%
%
%
%
%
%
\begingroup \catcode `|=0 \catcode `[= 1
\catcode`]=2 \catcode `\{=12 \catcode `\}=12
\catcode`\\=12 
|gdef|@alignverbatim#1\end{align}[#1|end[align]]
|gdef|@salignverbatim#1\end{align*}[#1|end[align*]]

|gdef|@alignatverbatim#1\end{alignat}[#1|end[alignat]]
|gdef|@salignatverbatim#1\end{alignat*}[#1|end[alignat*]]

|gdef|@xalignatverbatim#1\end{xalignat}[#1|end[xalignat]]
|gdef|@sxalignatverbatim#1\end{xalignat*}[#1|end[xalignat*]]

|gdef|@gatherverbatim#1\end{gather}[#1|end[gather]]
|gdef|@sgatherverbatim#1\end{gather*}[#1|end[gather*]]

|gdef|@gatherverbatim#1\end{gather}[#1|end[gather]]
|gdef|@sgatherverbatim#1\end{gather*}[#1|end[gather*]]

|gdef|@multilineverbatim#1\end{multiline}[#1|end[multiline]]
|gdef|@smultilineverbatim#1\end{multiline*}[#1|end[multiline*]]

|gdef|@arraxverbatim#1\end{arrax}[#1|end[arrax]]
|gdef|@sarraxverbatim#1\end{arrax*}[#1|end[arrax*]]

|gdef|@tabulaxverbatim#1\end{tabulax}[#1|end[tabulax]]
|gdef|@stabulaxverbatim#1\end{tabulax*}[#1|end[tabulax*]]

|endgroup

\def\align{\@verbatim \frenchspacing\@vobeyspaces \@alignverbatim
You are using the "align" environment in a style in which it is not defined.}

\@namedef{align*}{\@verbatim\@salignverbatim
You are using the "align*" environment in a style in which it is not defined.}
\expandafter\let\csname endalign*\endcsname =\endtrivlist

\def\alignat{\@verbatim \frenchspacing\@vobeyspaces \@alignatverbatim
You are using the "alignat" environment in a style in which it is not defined.}

\@namedef{alignat*}{\@verbatim\@salignatverbatim
You are using the "alignat*" environment in a style in which it is not defined.}
\expandafter\let\csname endalignat*\endcsname =\endtrivlist

\def\xalignat{\@verbatim \frenchspacing\@vobeyspaces \@xalignatverbatim
You are using the "xalignat" environment in a style in which it is not defined.}

\@namedef{xalignat*}{\@verbatim\@sxalignatverbatim
You are using the "xalignat*" environment in a style in which it is not defined.}
\expandafter\let\csname endxalignat*\endcsname =\endtrivlist

\def\gather{\@verbatim \frenchspacing\@vobeyspaces \@gatherverbatim
You are using the "gather" environment in a style in which it is not defined.}

\@namedef{gather*}{\@verbatim\@sgatherverbatim
You are using the "gather*" environment in a style in which it is not defined.}
\expandafter\let\csname endgather*\endcsname =\endtrivlist

\def\multiline{\@verbatim \frenchspacing\@vobeyspaces \@multilineverbatim
You are using the "multiline" environment in a style in which it is not defined.}

\@namedef{multiline*}{\@verbatim\@smultilineverbatim
You are using the "multiline*" environment in a style in which it is not defined.}
\expandafter\let\csname endmultiline*\endcsname =\endtrivlist

\def\arrax{\@verbatim \frenchspacing\@vobeyspaces \@arraxverbatim
You are using a type of "array" construct that is only allowed in AmS-LaTeX.}

\def\tabulax{\@verbatim \frenchspacing\@vobeyspaces \@tabulaxverbatim
You are using a type of "tabular" construct that is only allowed in AmS-LaTeX.}

\@namedef{arrax*}{\@verbatim\@sarraxverbatim
You are using a type of "array*" construct that is only allowed in AmS-LaTeX.}
\expandafter\let\csname endarrax*\endcsname =\endtrivlist

\@namedef{tabulax*}{\@verbatim\@stabulaxverbatim
You are using a type of "tabular*" construct that is only allowed in AmS-LaTeX.}
\expandafter\let\csname endtabulax*\endcsname =\endtrivlist


 \def\endequation{%
     \ifmmode\ifinner 
      \iftag@
        \addtocounter{equation}{-1} 
        $\hfil
           \displaywidth\linewidth\@taggnum\egroup \endtrivlist
        \global\tag@false
        \global\@ignoretrue   
      \else
        $\hfil
           \displaywidth\linewidth\@eqnnum\egroup \endtrivlist
        \global\tag@false
        \global\@ignoretrue 
      \fi
     \else   
      \iftag@
        \addtocounter{equation}{-1} 
        \eqno \hbox{\@taggnum}
        \global\tag@false%
        $$\global\@ignoretrue
      \else
        \eqno \hbox{\@eqnnum}
        $$\global\@ignoretrue
      \fi
     \fi\fi
 } 

 \newif\iftag@ \tag@false
 
 \def\TCItag{\@ifnextchar*{\@TCItagstar}{\@TCItag}}
 \def\@TCItag#1{%
     \global\tag@true
     \global\def\@taggnum{(#1)}}
 \def\@TCItagstar*#1{%
     \global\tag@true
     \global\def\@taggnum{#1}}

  \@ifundefined{tag}{
     \def\tag{\@ifnextchar*{\@tagstar}{\@tag}}
     \def\@tag#1{%
         \global\tag@true
         \global\def\@taggnum{(#1)}}
     \def\@tagstar*#1{%
         \global\tag@true
         \global\def\@taggnum{#1}}
  }{}

\makeatother

\begin{document}
\title[New cost terms through the homogenization]{ \textbf{New cost terms
through the homogenization of an optimal control problem under dynamic
boundary conditions on the microscopic particles }}
\author{J.I. D\'{\i}az$^{1}$}
\address{$^{1}$Instituto de Mathematica Interdisciplinar, Universidad
Complutense de Madrid, Spain.\\
\texttt{jidiaz@ucm.es}}
\author{T.A. Shaposhnikova$^{2}$}
\address{$^{2}$Lomonosov Moscow State University, Russia.\\
\texttt{shaposh.tan@mail.ru, avpodolskiy@yandex.ru}}
\author{A.V. Podolskiy$^{2}$}
\maketitle

\textbf{Abstract }Given an optimal control problem on a heterogeneous\textbf{%
\ }body with\textbf{\ }a periodical structure of particles depending on a
small parameter $\varepsilon $\textbf{, }we study the asymptotic behavior,
as $\varepsilon \rightarrow 0,$ of the optimal control functional and the
optimal state when the initial problem is of parabolic type, and when on the
particles' boundary, we assume a dynamic condition and the actuation of some
controls for some subset of the particles. We show, in the so-called
\textquotedblleft critical case\textquotedblright \ (concerning a certain
relation between the structure's period, the diameter of the balls, and the
growth coefficient of the particles boundary condition), the appearance of
some new non-local in time "strange terms", not only in the limit parabolic
equation but also in the limit cost functional. Microscopic localized
controls generate peculiar terms in both the limit equation and the cost
functional that do not appear in the case of controls applied to the entire
set of particles or when the boundary condition on the particles is of Robin
type.

\noindent \textbf{\textit{Keywords}} {Homogenization, Critical case, optimal
control, Strange term, Dynamic boundary condition, homogenized cost
functional.}\newline
\textbf{\textit{Subject Classification}} 35B27, 35K20, 49K20, 93C20.

\section{Introduction}

The problem we will consider arises in many different fields. For instance,
it is well-known that many problems in Chemical Engineering lead to the
optimization of some cost functionals (\cite{Upreti book}, \cite%
{Nolasco-Survey}) and the same happens in the Porous Media Theory in which
the word \textquotedblleft particle\textquotedblright \ must be replaced by
\textquotedblleft perforation\textquotedblright \ (see, e.g. \cite{Hurlov}, 
\cite{Hornung-Yaeger}, \cite{ConcaDliTi}, \cite{OlSh95}, \cite{GoSanchezP}, 
\cite{Iliev-Mikelik}, \cite{Timof}, \cite{Anguiano} and the many other
references quoted in the monographs \cite{DiGoShBook}). Simplified models in
Climatology can be also modeled in terms very close to the ones we will
study in this paper (\cite{DiPoShRACSAM}).

The main goal of this paper is to illustrate how the homogenization of some
optimal control problems may give rise to new non-local in time "strange
terms", not only in the limit parabolic equation but also in the limit cost
functional, assuming a dynamic boundary condition and the actuation of some
controls on some subset of the particles. We will do that for the so-called
\textquotedblleft critical case\textquotedblright ,\ that is characterized
by certain relation between the structure's period, the diameter of the
balls, and the growth coefficient in the particles' boundary condition. In
this way, microscopic localized controls generate peculiar terms in both the
limit equation and the cost function that do not appear, for instance, in
the case of the Robin type boundary condition on the particles. 

\bigskip We give a detailed presentation of the heterogeneous domain $\Omega
_{\varepsilon }$ in the next Section, but for the moment, we outline that,
since in the most of the\textrm{\ }cases it is impossible to act over the
entire spatial domain $\Omega _{\varepsilon }$, the control is applied only
on the boundary of the particles contained in a small portion of the domain (%
$\omega $ such that $\overline{\omega }\subset \Omega $). Thus, the set of
boundaries of the internal particles is constituted in the form $%
S_{\varepsilon }=S_{\varepsilon }^{1}\bigcup S_{\varepsilon }^{2}$, where $%
S_{\varepsilon }^{2}$ is the set of boundaries of the controlling particles $%
G_{\varepsilon }^{2}$ and $S_{\varepsilon }^{1}$ is the set of boundaries of
the particles $G_{\varepsilon }^{1}$ to which no control is implemented. The
state of the control problem is given through 
\begin{equation}
\left \{ 
\begin{array}{lr}
\partial _{t}u_{\varepsilon }(v)-\Delta u_{\varepsilon }(v)=f, & (x,t)\in
Q_{\varepsilon }^{T}, \\ 
\varepsilon ^{-\gamma }\partial _{t}u_{\varepsilon }(v)+\partial _{\nu
}u_{\varepsilon }(v)=\varepsilon ^{-\gamma }\chi _{S_{\varepsilon }^{2,T}}v,
& (x,t)\in S_{\varepsilon }^{T}, \\ 
u_{\varepsilon }(v)(x,0)=0, & x\in \Omega _{\varepsilon }\bigcup
S_{\varepsilon }, \\ 
u_{\varepsilon }(v)(x,t)=0, & (x,t)\in \Gamma ^{T},%
\end{array}%
\right.   \label{init state prob}
\end{equation}%
where $f\in L^{2}(Q^{T})$ and\textrm{\ }$v\in L^{2}(S_{\varepsilon }^{2,T})$
is the control.\textrm{\ }Here, we are using the notation (considering $%
0<T<\infty $)%
\begin{equation}
\begin{array}{llll}
\Omega _{\varepsilon }=\Omega \setminus \overline{G_{\varepsilon }}, & 
S_{\varepsilon }=\partial G_{\varepsilon }, & \partial \Omega _{\varepsilon
}=S_{\varepsilon }\cup \partial \Omega , &  \\ 
Q_{\varepsilon }^{T}=\Omega _{\varepsilon }\times (0,T), & \Gamma
^{T}=\partial \Omega \times (0,T), & S_{\varepsilon }^{T}=S_{\varepsilon
}\times (0,T), & Q^{T}=\Omega \times (0,T),%
\end{array}
\label{def: basic sets}
\end{equation}%
which will be described in detail in the next section. We note that $%
G_{\varepsilon }$ is the set of small particles ( $\varepsilon $%
-periodically distributed and homothetic to a unit ball $G_{0}$) in an open
bounded regular set $\Omega $ of $\mathbb{R}^{n}$, $n\geq 3$. By $\chi
_{S_{\varepsilon }^{2,T}}$, we denote the characteristic function of the set 
$S_{\varepsilon }^{2,T}=S_{\varepsilon }^{2}\times (0,T)$ that lies entirely
in the set $\omega _{\varepsilon }^{T}$ defined below 
\begin{equation*}
\omega _{\varepsilon }=\omega \cap \Omega _{\varepsilon },\quad \omega
_{\varepsilon }^{T}=\omega _{\varepsilon }\times (0,T),\quad \omega
^{T}=\omega \times (0,T).
\end{equation*}%
\ The parameter $\gamma >0$ plays a crucial role since in this paper we
consider the so-called \textquotedblleft critical case\textquotedblright \
governed by the size of particles that are translations of a small particle $%
a_{\varepsilon }G_{0},$ where $G_{0}$ is the unit ball with radious $%
a_{\varepsilon }=C_{0}\varepsilon ^{\gamma }$, $\gamma =\frac{n}{n-2},$ and $%
C_{0}$ is some positive constant.

\begin{figure}[!t]
\textrm{\center
\includegraphics[width=0.5%
\linewidth]{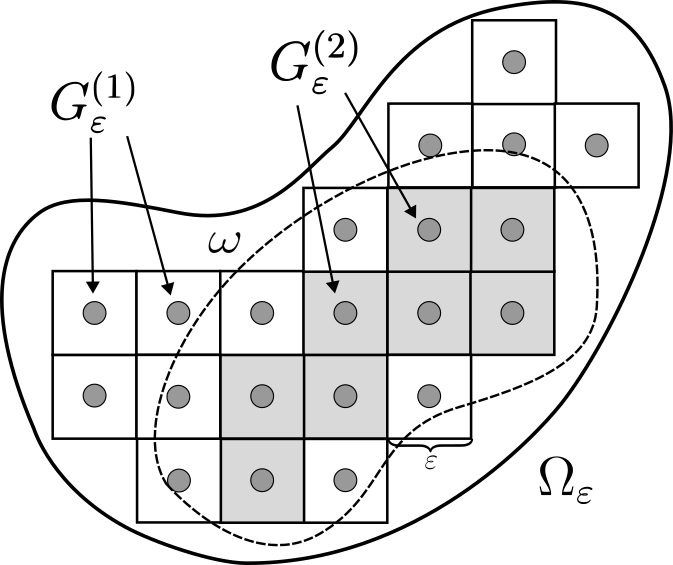} }
\caption{Perforated domain. The control is implemented only on $S_{\protect%
\varepsilon }^{2}$, the boundary of some of the internal balls: the ones
collected under the $G^{2}_\protect \varepsilon$.}
\label{fig: perforated domain}
\end{figure}


Notice that since the problem (\ref{init state prob}) is linear, by some
obvious change of variable, we can also consider the case of a non-zero
initial datum. To finalize the statement of the optimal control problem, we
introduce the cost functional $J_{\varepsilon
}:L^{2}(0,T;L^{2}(S_{\varepsilon }^{2}))\rightarrow \mathbb{R}$ 
\begin{equation}
J_{\varepsilon }(v)=\frac{1}{2}\Vert \nabla {u_{\varepsilon }(v)}\Vert
_{L^{2}(Q_{\varepsilon }^{T})}^{2}+\frac{1}{2}\int \limits_{\Omega
_{\varepsilon }}u_{\varepsilon }^{2}(v)(x,T)dx \\
+\frac{{\varepsilon }^{-\gamma }}{2}\int \limits_{S_{\varepsilon
}}u_{\varepsilon }^{2}(v)(x,T){ds}+\varepsilon ^{-\gamma }\frac{N}{2}\Vert {v%
}\Vert _{L^{2}(S_{\varepsilon }^{2,T})}^{2},  \label{cost functional}
\end{equation}%
where $N>0$. Then, the optimal control $v_{\varepsilon }$ is given by 
\begin{equation}
J_{\varepsilon }(v_{\varepsilon })=\inf \limits_{v\in
L^{2}(0,T;L^{2}(S_{\varepsilon }^{2}))}J_{\varepsilon }(v).
\label{optim control def}
\end{equation}%
In what follow, we will abuse the notation and simply write $u_{\varepsilon
} $ instead of $u_{\varepsilon }(v_{\varepsilon })$. By applying different
results in the literature (see, e.g., \cite{LionsOpt}, \cite%
{Fursikov2000OptimalCO}, \cite{Trolstz}, \cite{Glow-Lions}), it is
well-known that there exists a unique optimal control $v_{\varepsilon }\in
L^{2}(0,T;L^{2}(S_{\varepsilon }^{2}))$.

We point out that the consideration of a non-zero target function $u_{T}\in
L^{2}(\Omega _{\varepsilon })$, a given profile observed at the final time $T
$, can be reduced to the above case of $u_{T}\equiv 0$ by a suitable change
of variables, at least for a dense set of $u_{T}$ in $L^{2}(\Omega )$ (see
Remark \ref{Rem target nonzero} below).


The main goal of this paper is to apply a homogenization process to the
above optimal control problem when $\varepsilon \rightarrow 0$. As in many
other formulations, the kind of the limit problem strongly depends on the
size of the particles' radii $C_{0}\varepsilon ^{\alpha }$, $C_{0}>0$ (see,
e.g. \cite{Timof}, \cite{Anguiano} and the exposition made in \cite%
{DiGoShBook}). Here, we consider the critical case in which $\alpha =\gamma =%
{n}/{(n-2).}$ For different elliptic and parabolic problems, it is
well-known that this critical choice leads to the emergence of a new local
\textquotedblleft strange\textquotedblright \ term (the naming is due to~%
\cite{Cioranescu1997AST}) in the effective partial differential equation
(see~\cite{Hurlov}, \cite{Cioranescu1997AST}, \cite{Kaizu1}, \cite%
{ZuShDiffEq}, and the monograph \cite{DiGoShBook}).

It is well-known that the introduction of a dynamic boundary condition on
the particle boundary causes the aforementioned \textquotedblleft
strange\textquotedblright \ term \ to become a \textquotedblleft
non-local\textquotedblright \ operator, derived by solving a suitable
ordinary differential equation (we refer to \cite{ShDyn19}, \cite%
{DiShZarbitrary}, for the case of an elliptic Poisson equation for the
state). Also, it was shown that in the framework of optimal control problems
there appear some new terms in the limit cost functional (in contrast with
previous results in the literature for related formulations, (see, e.g., ~%
\cite{SJPZ02}, \cite{ZuShDynDok19}, \cite{PoShControl20}, \cite%
{Shaposhnikova2022HomogenizationOT}, \cite{DShPo-first control}, \cite%
{DiPoSh control arbt size}, \cite{DiPoShBound22} and especially \cite%
{DiPoShAttouch} for the case of distributed controls appearing in the state
equation). One of the major new features we will demonstrate in this article
is that when the controls act on the boundary of some particles, some new
terms appear in the cost functional, the non-local terms in time are of a
different nature and some new non-local in time operators must be introduced.

To state the homogenization results, we need to introduce several auxiliary
problems. On the uncontrolled particles, we use non-local operator $%
M(\varphi )$, arising in previous studies (see \cite{DiPoShAttouch}), that
is defined as a solution to 
\begin{equation}
\left \{ 
\begin{array}{lr}
\partial _{t}M(\varphi )+\mathcal{B}_{n}M(\varphi )=\varphi , & t\in (0,T),
\\ 
M(\varphi )(0)=0, & 
\end{array}%
\right. 
\end{equation}%
where $\mathcal{B}_{n}=(n-2)C_{0}^{-1}$ and $\varphi \in L^{2}(0,T)$ is a
given function, and its adjoint operator $M^{\ast },$ 
\begin{equation}
\left \{ 
\begin{array}{lr}
-\partial _{t}M^{\ast }(\varphi )+\mathcal{B}_{n}M^{\ast }(\varphi )=\varphi
, & t\in (0,T), \\ 
M^{\ast }(\varphi )(T)=0. & 
\end{array}%
\right. 
\end{equation}

A similar non-local operator, $G(\varphi )$, and its adjoint operator $%
G^{\ast },$  must be defined on the controlled particles 
\begin{equation}
\left \{ 
\begin{array}{lr}
\partial _{t}G(\varphi )+(\mathcal{B}_{n}+N^{-1})G(\varphi )=\varphi , & 
t\in (0,T), \\ 
G(\varphi )(0)=0,\quad  & 
\end{array}%
\right.   \label{G prob}
\end{equation}
\begin{equation}
\left \{ 
\begin{array}{lr}
-\partial _{t}G^{\ast }(\varphi )+(\mathcal{B}_{n}+N^{-1})G^{\ast }(\varphi
)=\varphi , & t\in (0,T), \\ 
G^{\ast }(\varphi )(T)=0. & 
\end{array}%
\right.   \label{adj G prob}
\end{equation}%
Besides that, we will need to define some new operators $H$ and $H^{\ast },$
coupled with $G^{\ast }$ and $G,$ respectively, by the problems 
\begin{equation}
\left \{ 
\begin{array}{lr}
-\partial _{t}H^{\ast }(\varphi )+(\mathcal{B}_{n}+N^{-1})H^{\ast }(\varphi
)-N^{-1}(\mathcal{B}_{n}+N^{-1})G(H^{\ast }(\varphi ))=\varphi , & t\in
(0,T), \\ 
H^{\ast }(\varphi )(T)=0, & 
\end{array}%
\right.   \label{H*eq}
\end{equation}%
and 
\begin{equation}
\left \{ 
\begin{array}{lr}
\partial _{t}H(\varphi )+(\mathcal{B}_{n}+N^{-1})H(\varphi )-N^{-1}(\mathcal{%
B}_{n}+N^{-1})G^{\ast }(H(\varphi ))=\varphi , & t\in (0,T), \\ 
H(\varphi )(0)=0. & 
\end{array}%
\right.   \label{eqH}
\end{equation}%
%

Notice that the operators $G,$ $M,$ $G^{\ast }$ and $M^{\ast }$ can be
explicitly written. For instance%
\begin{equation*}
G(\varphi )(t)=\int \limits_{0}^{t}e^{-(\mathcal{B}_{n}+N^{-1})(t-s)}\varphi
(s)ds,
\end{equation*}%
which show the non-local in time nature. Some useful properties of these
operators will be shown later (see Section 4).

Although, the detailed statements of our results will be presented later, we
summarize now that\ we will prove the convergence of the optimal controls $%
v_{\varepsilon }$ $\chi _{S_{\varepsilon }^{2,T}}\rightarrow v_{0}\chi
_{\omega ^{T}}$ strongly in $L^{2}(\omega ^{T})$, the convergence of the
corresponding states (extended to $\Omega $) $\tilde{u}_{\varepsilon
}\rightharpoonup u_{0}$ weakly in $L^{2}(0,T;H_{0}^{1}(\Omega ,\partial
\Omega ))$ and $\partial _{t}\tilde{u}_{\varepsilon }\rightharpoonup
\partial _{t}u_{0}$ weakly in $L^{2}(Q^{T})$, and in some sense, that will
be indicated later, the microscopic optimal control $v_{\varepsilon }$
converges to the macroscopic optimal control $v_{0}\in
H^{1}(0,T;L^{2}(\omega )),$ with the limit state problem given by 
\begin{equation}
\left \{ 
\begin{array}{ll}
\partial _{t}u_{0}(v)-\Delta u_{0}(v)+\mathcal{A}_{n}(u_{0}(v)-\mathcal{B}%
_{n}H(u_{0}(v)))\chi _{\omega ^{T}} &  \\ 
+\mathcal{A}_{n}(u_{0}(v)-\mathcal{B}_{n}M(u_{0}(v)))\chi _{(\Omega
\setminus \overline{\omega })\times (0,T)}=f+\mathcal{A}_{n}\mathcal{B}%
_{n}v\chi _{\omega ^{T}}, & (x,t)\in Q^{T}, \\ 
u_{0}(v)(x,0)=0, & x\in \Omega , \\ 
u_{0}(v)(x,t)=0, & (x,t)\in \Gamma ^{T},%
\end{array}%
\right. 
\end{equation}%
where $v\in H^{1}(0,T;L^{2}(\omega ))$ with $v(x,0)=0$ and the limit cost
functional given by 
\begin{eqnarray}
J_{0}(v) &=&\Vert \nabla u_{0}(v)\Vert _{L^{2}(Q^{T})}^{2}+\Vert
u_{0}(v)(x,T)\Vert _{L^{2}(\Omega )}^{2}+N^{-1}\mathcal{A}_{n}\mathcal{B}%
_{n}\int \limits_{\omega ^{T}}(\partial _{t}G^{\ast }(H(u_{0}(v))))^{2}{dxdt}
\notag \\
&&+\mathcal{A}_{n}\mathcal{B}_{n}\int \limits_{\Omega \setminus \overline{%
\omega }}|M(u_{0}(v))(x,T)|^{2}{dx}+\mathcal{A}_{n}\int \limits_{(\Omega
\setminus \overline{\omega })\times (0,T)}|u_{0}(v)-\mathcal{B}%
_{n}M(u_{0}(v))|^{2}{dxdt}  \notag \\
&&+\mathcal{A}_{n}\mathcal{B}_{n}\int \limits_{\omega }|H(u_{0}(v))(x,T)|^{2}{%
dx}+\mathcal{A}_{n}\int \limits_{\omega ^{T}}|u_{0}(v)-\mathcal{B}%
_{n}H(u_{0}(v))|^{2}{dxdt}  \notag \\
&&+N\mathcal{A}_{n}\mathcal{B}_{n}\int \limits_{\omega ^{T}}(\partial
_{t}v)^{2}{dxdt}+\mathcal{A}_{n}\mathcal{B}_{n}(\mathcal{B}%
_{n}+N^{-1})\int \limits_{\omega }v^{2}(x,T){dx}+  \notag \\
&&+N\mathcal{A}_{n}\mathcal{B}_{n}^{2}(\mathcal{B}_{n}+N^{-1})\int \limits_{%
\omega ^{T}}v^{2}{dxdt}.  \label{limit functional}
\end{eqnarray}%
of the optimal control problem 
\begin{equation}
J_{0}(v_{0})=\min \limits_{v\in U_{ad}}J_{0}(v),
\end{equation}%
where the set of admissible functions is now 
\begin{equation*}
U_{ad}=\{ \psi \in H^{1}(0,T;L^{2}(\omega ))|\psi (x,0)=0\}.
\end{equation*}

It can be seen that the first two terms and the last term of $J_{0}$ clearly
correspond to the three terms present in $J_{\varepsilon }$, but the rest of
the terms of $J_{0}$ are, in some way, unexpected. The terms of $J_{0}$
which are related to the final evaluation at time $T$ \ are new, and two of
them are actually non-local in time since they involve the operators $M$ and 
$H$, respectively. The terms of $J_{0}$ which contain the operator $M$ are
integrals extended on the complementary of $\omega $, and they are a
consequence of the microscopic control $v_{\varepsilon }$ being applied only
at the boundary of some particles, $S_{\varepsilon }^{2},$ and not at all of
them. The unexpected terms of $J_{0}$ appear as a consequence of several
implicit relations that are justified in the proof of the Theorem \ref{thm:
cost func limit} below. The last set of the terms that affect time
derivatives of a function of $u_{0}$ and the control $v$ are very surprising
since nothing suggests their appearance when observing the expression for $%
J_{\varepsilon }.$

In order to get the proof of these convergence results, we will use the
extension of the Pontryagin's method to the case of boundary controls (see,
e.g., \cite{LionsOpt}). In Section 2, we give the details of the formulation
of the direct problem as well the coupled system arising in terms of the
adjoint optimal state $p_{\varepsilon }$: we will show that the optimal
control is given by $v_{\varepsilon }=-N^{-1}p_{\varepsilon }\chi
_{S_{\varepsilon }^{2,T}}.$ The a priori estimates allow passing to the
limit in the couple $(u_{\varepsilon },p_{\varepsilon })$ (and thus in the
controls $v_{\varepsilon }$) are obtained in Section 3. Some detailed
statements of the main theorems of this paper are collected in Section 5,
but before that, we present in Section 4 some properties of the auxiliary
non-local in time operators $G$, $H$ and $M$ defined above. The proof
characterizing the limit couple $(u_{0},p_{0})$ from the microscopic couple $%
(u_{\varepsilon },p_{\varepsilon })$ is given in Section 6. Finally, the
identification of the limit cost functional $J_{0}(v)$ from the microscopic
cost functional $J_{\varepsilon }(v)$ is obtained in Section 7.

\section{Problem statement and the adjoint problem}

Let $\Omega $ be a bounded domain in $\mathbb{R}^{n}$, $n\geq 3$, with a
smooth boundary $\partial \Omega $. We denote the unit cube $(-1/2,1/2)^{n}$
centered at the coordinates origin as $Y$. Let $G_{0}$ be a ball of radii $%
C_{0}$ such that $\overline{G}_{0}\subset Y$. Next, given a set $B$ of $%
\mathbb{R}^{n},$ by $\delta B$, $\delta >0$, we denote the set $\{x\in 
\mathbb{R}^{n}|\delta ^{-1}x\in B\}$. For $\varepsilon >0$, we define $%
\widetilde{\Omega }_{\varepsilon }=\{x\in \Omega |\rho (x,\partial \Omega
)>2\varepsilon \}$, where $\rho $ is the Euclidean distance. Let $%
a_{\varepsilon }=C_{0}\varepsilon ^{\alpha }$, where $C_{0}$ is a positive
constant and $\alpha =\frac{n}{n-2}$. We define sets $G_{\varepsilon
}^{j}=a_{\varepsilon }G_{0}+j$, where $j\in \mathbb{Z}^{n}$, $\mathbb{Z}^{n}$
is the set of vectors in $\mathbb{R}^{n}$ with integer coordinates. Now, we
introduce the set of indices $\Upsilon _{\varepsilon }=\{j\in \mathbb{Z}%
^{n}:(a_{\varepsilon }G_{0}+{\varepsilon }j)\bigcap \widetilde{\Omega
_{\varepsilon }}\neq \emptyset \}$, note that the cardinal of $\Upsilon
_{\varepsilon }$ satisfies that $|\Upsilon _{\varepsilon }|\cong d{%
\varepsilon }^{-n}$ for some $d=const>0$. Finally, we define the set 
\begin{equation*}
G_{\varepsilon }=\bigcup_{j\in \Upsilon _{\varepsilon }}G_{\varepsilon }^{j}.
\end{equation*}%
Now, if we define $Y_{\varepsilon }^{j}={\varepsilon }Y+{\varepsilon }j$, $%
P_{\varepsilon }^{j}={\varepsilon }j$, where $Y=(-1/2,1/2)^{n}$, then it is
easy to see that $\overline{G_{\varepsilon }^{j}}\subset Y_{\varepsilon
}^{j} $ and the center of the ball $G_{\varepsilon }^{j}=a_{\varepsilon
}G_{0}+{\varepsilon }j$ coincides with the center of the cube $%
Y_{\varepsilon }^{j}$.

In the formulation of the optimal control problem, we will consider only
some controllable region $\omega$, $\overline{\omega}\subset \Omega$, in the
whole domain $\Omega$. Thus, we split indices of $\Upsilon_\varepsilon$ into
two subsets $\Upsilon^{2}_{\varepsilon}=\{j\in \Upsilon_{\varepsilon}:%
\overline{Y^{j}_{\varepsilon}}\subset \omega \}$ and $\Upsilon^{1}_%
\varepsilon = \Upsilon_\varepsilon \setminus \Upsilon^{2}_\varepsilon$.
Based on these sets, we will use the following notations 
\begin{gather*}
G_{\varepsilon}^{1}=\bigcup_{j\in \Upsilon^{1}_{\varepsilon}}{%
G^{j}_{\varepsilon}},\, G_{\varepsilon}^{2}=\bigcup_{j\in
\Upsilon^{2}_{\varepsilon}}{G^{j}_{\varepsilon}}, \\
S^{1}_\varepsilon = \partial G^{1}_\varepsilon,\,S^{2}_\varepsilon =
\partial G^{2}_\varepsilon.
\end{gather*}

Further, we introduce the sets 
\begin{equation*}
\Omega_{\varepsilon}=\Omega \setminus \overline{G_{\varepsilon}},\quad
\partial \Omega_{\varepsilon}=\partial \Omega \cup S_{\varepsilon},\quad
S_{\varepsilon}=S_{\varepsilon}^{1}\cup S_{\varepsilon}^{2},
\end{equation*}
and, for $0<T<\infty$, we define 
\begin{equation*}
Q^{T}_{\varepsilon}=\Omega_{\varepsilon}\times (0,T),\, \, \omega^{T}=\omega
\times (0,T),
\end{equation*}
\begin{equation*}
S_{\varepsilon}^{T}=S_{\varepsilon}\times
(0,T),\,S_{\varepsilon}^{i,T}=S_{\varepsilon}^{i}\times (0,T),\,i=1,2.
\end{equation*}

Now, we are in a position to formulate optimal control problem. Let $v\in
L^{2}(0,T;S_{\varepsilon }^{2})$. By $u_{\varepsilon }(v)$, we denote an
element of $L^{2}(0,T;H^{1}(\Omega _{\varepsilon },\partial \Omega ))$ with
the time derivative satisfying $\partial _{t}u_{\varepsilon }(v)\in
L^{2}(0,T;L^{2}(\Omega _{\varepsilon }))\bigcap
L^{2}(0,T;L^{2}(S_{\varepsilon }))$ and $u_{\varepsilon }(x,0)=0$ for $x\in
\Omega _{\varepsilon }\bigcup S_{\varepsilon }$, that is a solution to the
parabolic problem with the internal dynamic boundary condition. By $%
H^{1}(\Omega _{\varepsilon },\partial \Omega )$, we denote the closure with
respect to the norm $H^{1}(\Omega _{\varepsilon })$ of the set of infinitely
differentiable in $\overline{\Omega }_{\varepsilon }$ functions vanishing
near the boundary $\partial \Omega $. As a solution of 
\eqref{init state
prob}, we will consider a function $u_{\varepsilon }(v)$ with the
above-mentioned properties that satisfies the following integral identity 
\begin{equation}
\int \limits_{Q_{\varepsilon }^{T}}\partial _{t}u_{\varepsilon }\varphi {dxdt%
}+\int \limits_{Q_{\varepsilon }^{T}}\nabla u_{\varepsilon }\nabla \varphi {%
dxdt}+\varepsilon ^{-\gamma }\int \limits_{S_{\varepsilon }^{T}}\partial
_{t}u_{\varepsilon }\varphi {dsdt}=\int \limits_{Q_{\varepsilon
}^{T}}f\varphi{dxdt}+\varepsilon ^{-\gamma }\int \limits_{S_{\varepsilon
}^{2,T}}v\varphi {dsdt}  \label{int ident u}
\end{equation}%
for an arbitrary function $\varphi \in L^{2}(0,T;H^{1}(\Omega _{\varepsilon
},\partial \Omega ))$. We consider now the optimal control problem stated in
the Introduction (see \eqref{optim control def}).

\begin{remark}
\label{Rem target nonzero}Our approach can be easily extended to the case of
a non-zero target $u_{T}\in L^{2}(\Omega )$, at least for a dense set of $%
u_{T}$ in $L^{2}(\Omega )$, i.e. the cost functional will be%
\begin{eqnarray}
J_{\varepsilon }(v) &=&\frac{1}{2}\Vert \nabla {u_{\varepsilon }(v)}\Vert
_{L^{2}(Q_{\varepsilon }^{T})}^{2}+\frac{1}{2}\int \limits_{\Omega
_{\varepsilon }}\left( u_{\varepsilon }(v)(x,T)-u_{T}\right) ^{2}dx  \notag
\\
&&+\frac{{\varepsilon }^{-\gamma }}{2}\int \limits_{S_{\varepsilon
}}u_{\varepsilon }^{2}(v)(x,T){ds}+\varepsilon ^{-\gamma }\frac{N}{2}\Vert {v%
}\Vert _{L^{2}(S_{\varepsilon }^{2,T})}^{2}.  \label{nonzero cost}
\end{eqnarray}

Indeed, let us assume that $u_{T}\in L^{2}(\Omega )$ is such that there
exists a converging as $\varepsilon \to0$ sequence of functions $%
V_{\varepsilon }\in L^{2}(0,T;L^{2}(\Omega _{\varepsilon }))$, i.e. 
\begin{equation}
V_{\varepsilon }\rightharpoonup V_{0}\mbox{ weakly in }L^{2}(Q^{T}),\text{
for some }V_{0}\in L^{2}(Q^{T}),  \label{hypo Weak  conver Ve}
\end{equation}
such that for the unique solution $U_{\varepsilon }$ of the auxiliary
problem 
\begin{equation}
\left \{ 
\begin{array}{lr}
\partial _{t}U_{\varepsilon }-\Delta U_{\varepsilon }=V_{\varepsilon }, & 
(x,t)\in Q_{\varepsilon }^{T}, \\ 
\varepsilon ^{-\gamma }\partial _{t}U_{\varepsilon }+\partial _{\nu
}U_{\varepsilon }=0, & (x,t)\in S_{\varepsilon }^{T}, \\ 
U_{\varepsilon }(x,0)=0, & x\in \Omega _{\varepsilon }\bigcup S_{\varepsilon
}, \\ 
U_{\varepsilon }(x,t)=0, & (x,t)\in \Gamma ^{T},%
\end{array}%
\right.
\end{equation}%
we have%
\begin{equation}
\widetilde{U}_{\varepsilon }\rightharpoonup U_{0}\mbox{ weakly in }%
L^{2}(0,T;H_{0}^{1}(\Omega )), \\
\partial _{t}\widetilde{U}_{\varepsilon }\rightharpoonup \partial _{t}U_{0}%
\mbox{ weakly in }L^{2}(Q^{T}),
\end{equation}%
and 
\begin{equation*}
U_{0}(x,T)=u_{T}(x)\, \text{ a.e. }\, x\in \Omega .
\end{equation*}

Then by defining the change of variables 
\begin{equation*}
w{_{\varepsilon }(v)=u_{\varepsilon }(v)-}U_{\varepsilon },
\end{equation*}%
where now ${u_{\varepsilon }(v)}$ is the optimal control associated to the
cost functional (\ref{nonzero cost}), we find that $w{_{\varepsilon }(v)}$
is the optimal control associate to the previous cost functional (\ref{cost
functional}) with $u_{T}\equiv 0.$ Finally, by the arguments of Remark 7.1
of \cite{DiPoShAttouch} (or Theorem 4 of \  \cite{DiPoShRACSAM}), it is easy
to prove that the set of final data $u_{T}\in L^{2}(\Omega )$ satisfying the
above mentioned conditions is a dense set of\ $L^{2}(\Omega ).$ Then, the
perturbed equation satisfied by $w{_{\varepsilon }(v)}$, i.e. 
\begin{equation*}
\partial _{t}w{_{\varepsilon }(v)}-\Delta w{_{\varepsilon }(v)}%
=f-V_{\varepsilon },
\end{equation*}%
does not add any difficulty, once we know that (\ref{hypo Weak conver Ve})
holds.
\end{remark}

\bigskip

To obtain the characterization of the optimal control, we consider the
adjoint problem 
\begin{equation}
\left \{ 
\begin{array}{lr}
-\partial _{t}p_{\varepsilon }-\Delta p_{\varepsilon }=-\Delta
u_{\varepsilon }, & (x,t)\in Q_{\varepsilon }^{T}, \\ 
\partial _{\nu }p_{\varepsilon }-\varepsilon ^{-\gamma }\partial
_{t}p_{\varepsilon }=\partial _{\nu }u_{\varepsilon }, & (x,t)\in
S_{\varepsilon }^{T}, \\ 
p_{\varepsilon }(x,T)=u_{\varepsilon }(x,T), & x\in \Omega _{\varepsilon
}\bigcup S_{\varepsilon }, \\ 
p_{\varepsilon }(x,t)=0, & (x,t)\in \Gamma ^{T}.%
\end{array}%
\right.  \label{init adjoint prob}
\end{equation}

We say that a function $p_{\varepsilon }\in L^{2}(0,T;H^{1}(\Omega
_{\varepsilon },\partial \Omega )),$ with $\partial _{t}p_{\varepsilon }\in
L^{2}(0,T;L^{2}(\Omega _{\varepsilon }))\cap L^{2}(0,T;L^{2}(S_{\varepsilon
})),$ is a weak solution to the problem \eqref{init adjoint prob} if $%
p_{\varepsilon }(x,T)=u_{\varepsilon }(x,T)$ for a.e. $x\in \Omega
_{\varepsilon }$ and a.e. $x\in S_{\varepsilon }$ and if it satisfies the
integral identity 
\begin{equation}
-\int \limits_{Q_{\varepsilon }^{T}}\partial _{t}p_{\varepsilon }\varphi {%
dxdt}+\int \limits_{Q_{\varepsilon }^{T}}\nabla p_{\varepsilon }\nabla
\varphi {dxdt}-\varepsilon ^{-\gamma }\int \limits_{S_{\varepsilon
}^{T}}\partial _{t}p_{\varepsilon }\varphi {dsdt}=\int
\limits_{Q_{\varepsilon }^{T}}\nabla u_{\varepsilon }\nabla \varphi {dxdt},
\label{int ident init adjoint prob}
\end{equation}%
for any test function $\varphi \in L^{2}(0,T;H^{1}(\Omega _{\varepsilon
},\partial \Omega ))$. For a given $u_{\varepsilon }$ (with the regularity
of the weak solutions of (\ref{init state prob})) it is well-known that
there exists a unique solution to the problem \eqref{init adjoint prob}
(see, e.g., \cite{BejeDVrabie} and its references).

The following theorem gives a characterization of the optimal control $%
v_{\varepsilon }$ in terms of the adjoint state $p_{\varepsilon }$.

\begin{theorem}
\label{thm: init opt cont} Let the pair of functions $(u_\varepsilon(v_%
\varepsilon), v_\varepsilon)$ be an optimal solution of the problem %
\eqref{optim control def}, then $v_\varepsilon = -N^{-1}p_\varepsilon
\chi_{S^{2, T}_\varepsilon}$, where $p_\varepsilon$ is the solution to %
\eqref{init adjoint prob}. The converse is also true.
\end{theorem}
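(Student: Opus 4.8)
The plan is to exploit that $J_\varepsilon$ is a convex quadratic functional on the Hilbert space $L^2(0,T;L^2(S_\varepsilon^2))$ with no constraints on the admissible set, so that the optimal control is characterized by the single stationarity condition $\frac{d}{d\tau}J_\varepsilon(v_\varepsilon+\tau h)\big|_{\tau=0}=0$ for every direction $h$. Since the control-to-state map $v\mapsto u_\varepsilon(v)$ is affine, its directional derivative $z=z(h)$ is the solution of the linearized state problem, i.e. of the weak identity \eqref{int ident u} with $f$ replaced by $0$ and $v$ replaced by $h$.

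First I would compute the G\^{a}teaux derivative of \eqref{cost functional} explicitly, obtaining
\begin{equation*}
J_\varepsilon'(v_\varepsilon)[h]=\int_{Q_\varepsilon^T}\nabla u_\varepsilon\cdot\nabla z\,dxdt+\int_{\Omega_\varepsilon}u_\varepsilon(x,T)z(x,T)\,dx+\varepsilon^{-\gamma}\int_{S_\varepsilon}u_\varepsilon(x,T)z(x,T)\,ds+\varepsilon^{-\gamma}N\int_{S_\varepsilon^{2,T}}v_\varepsilon\,h\,dsdt.
\end{equation*}
The heart of the argument is to collapse the first three terms, all quadratic in $u_\varepsilon$, into one term linear in $h$ via the adjoint state. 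To do this I would test the adjoint identity \eqref{int ident init adjoint prob} with $\varphi=z$ and the linearized state identity with $\varphi=p_\varepsilon$, then integrate by parts in time in $\int_{Q_\varepsilon^T}\partial_t z\,p_\varepsilon$ and $\varepsilon^{-\gamma}\int_{S_\varepsilon^T}\partial_t z\,p_\varepsilon$. The initial endpoint terms vanish since $z(\cdot,0)=0$, while the terminal endpoint terms yield $\int_{\Omega_\varepsilon}z(x,T)p_\varepsilon(x,T)\,dx$ and $\varepsilon^{-\gamma}\int_{S_\varepsilon}z(x,T)p_\varepsilon(x,T)\,ds$; invoking the terminal condition $p_\varepsilon(\cdot,T)=u_\varepsilon(\cdot,T)$ from \eqref{init adjoint prob} and subtracting the two identities, all gradient and interior time-derivative contributions cancel, leaving
\begin{equation*}
\int_{Q_\varepsilon^T}\nabla u_\varepsilon\cdot\nabla z\,dxdt+\int_{\Omega_\varepsilon}u_\varepsilon(x,T)z(x,T)\,dx+\varepsilon^{-\gamma}\int_{S_\varepsilon}u_\varepsilon(x,T)z(x,T)\,ds=\varepsilon^{-\gamma}\int_{S_\varepsilon^{2,T}}p_\varepsilon\,h\,dsdt.
\end{equation*}

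Substituting this into the expression for $J_\varepsilon'(v_\varepsilon)[h]$ gives $J_\varepsilon'(v_\varepsilon)[h]=\varepsilon^{-\gamma}\int_{S_\varepsilon^{2,T}}(p_\varepsilon+N v_\varepsilon)\,h\,dsdt$. Requiring this to vanish for all $h\in L^2(0,T;L^2(S_\varepsilon^2))$ forces $p_\varepsilon+N v_\varepsilon=0$ a.e. on $S_\varepsilon^{2,T}$, which is exactly $v_\varepsilon=-N^{-1}p_\varepsilon\chi_{S_\varepsilon^{2,T}}$. For the converse I would use convexity: $J_\varepsilon$ is a nonnegative quadratic functional, so any $v_\varepsilon$ satisfying this stationarity relation is automatically the unique global minimizer; hence solving the coupled system for $(u_\varepsilon,p_\varepsilon)$ together with $v_\varepsilon=-N^{-1}p_\varepsilon\chi_{S_\varepsilon^{2,T}}$ produces the optimal pair.

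I expect the main obstacle to be the time integration by parts, where the limited regularity ($\partial_t u_\varepsilon,\partial_t p_\varepsilon\in L^2(0,T;L^2(\Omega_\varepsilon))\cap L^2(0,T;L^2(S_\varepsilon))$ and nothing more) must be checked to license evaluating $z$ and $p_\varepsilon$ at the endpoints $t=0$ and $t=T$, and to ensure the endpoint terms are well defined simultaneously on $\Omega_\varepsilon$ and on $S_\varepsilon$. A density argument approximating $z$ and $p_\varepsilon$ by functions smooth in time and passing to the limit would make the formal integration by parts rigorous; once that is secured, the algebraic cancellation between the two weak identities is routine.
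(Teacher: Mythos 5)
Your proposal is correct and follows essentially the same route as the paper: compute the directional derivative of $J_\varepsilon$ via the linearized state, convert the three quadratic terms in $u_\varepsilon$ into $\varepsilon^{-\gamma}\int_{S_\varepsilon^{2,T}}p_\varepsilon h\,dsdt$ by pairing the adjoint identity with the linearized state identity, and conclude by stationarity. The only differences are that you spell out the duality/integration-by-parts step that the paper compresses into the single phrase ``we use the definition of $p_\varepsilon$,'' and you supply the convexity argument for the converse, which the paper asserts but does not prove; both additions are sound.
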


\begin{proof}
Let $v$ be an arbitrary function in $L^{2}(S^{2 ,T} _{\varepsilon })$ and $%
\lambda >0$. By $u_{\varepsilon }^{\lambda }$, we denote the solution of %
\eqref{optim control def} with the control $v_{\varepsilon }+\lambda v$,
i.e. $u_{\varepsilon }^{\lambda }=u_{\varepsilon }(v_{\varepsilon }+\lambda
v)$. We use $u_{\varepsilon }=u_{\varepsilon }(v_{\varepsilon })$ to
simplify the notation. Then we have 
\begin{gather*}
J_{\varepsilon }(v_{\varepsilon }+\lambda v)-J_{\varepsilon }(v_{\varepsilon
})=\frac{1}{2}\Vert \nabla u_{\varepsilon }^{\lambda }\Vert
_{L^{2}(Q_{\varepsilon }^{T})}^{2}+\frac{1}{2}\Vert u_{\varepsilon
}^{\lambda }(x,T)\Vert _{L^{2}(\Omega _{\varepsilon })}^{2} \\
+\frac{\varepsilon ^{-\gamma }}{2}\Vert u_{\varepsilon }^{\lambda
}(x,T)\Vert _{L^{2}(S_{\varepsilon })}^{2}+\varepsilon ^{-\gamma }\frac{N}{2}%
\Vert v_{\varepsilon }+\lambda v\Vert _{L^{2}(S_{\varepsilon }^{2,T})}^{2} \\
-\frac{1}{2}\Vert \nabla u_{\varepsilon }\Vert _{L^{2}(Q_{\varepsilon
}^{T})}^{2}-\frac{1}{2}\Vert u_{\varepsilon }(x,T)\Vert _{L^{2}(\Omega
_{\varepsilon })}^{2} \\
-\frac{\varepsilon ^{-\gamma }}{2}\Vert u_{\varepsilon }(x,T)\Vert
_{L^{2}(S_{\varepsilon })}^{2}-\varepsilon ^{-\gamma }\frac{N}{2}\Vert
v_{\varepsilon }\Vert _{L^{2}(S_{\varepsilon }^{2,T})}^{2} \\
=\frac{1}{2}\int \limits_{Q_{\varepsilon }^{T}}\nabla (u_{\varepsilon
}^{\lambda }-u_{\varepsilon })\nabla (u_{\varepsilon }^{\lambda
}+u_{\varepsilon }){dxdt}+\frac{1}{2}\int \limits_{\Omega _{\varepsilon
}}(u_{\varepsilon }^{\lambda }-u_{\varepsilon })(x,T)(u_{\varepsilon
}^{\lambda }+u_{\varepsilon })(x,T){dx} \\
+\frac{\varepsilon ^{-\gamma }}{2}\int \limits_{S_{\varepsilon
}}(u_{\varepsilon }^{\lambda }-u_{\varepsilon })(x,T)(u_{\varepsilon
}^{\lambda }+u_{\varepsilon })(x,T){ds}+\varepsilon ^{-\gamma }\frac{N}{2}%
\int \limits_{S _{\varepsilon }^{2, T}}(2\lambda v_{\varepsilon }v+\lambda
^{2}v^{2}){dsdt.}
\end{gather*}%
We define the function $\theta _{\varepsilon }=(u_{\varepsilon }^{\lambda
}-u_{\varepsilon })/\lambda $. It is easy to see that $\theta _{\varepsilon
} $ is the unique solution to the problem 
\begin{equation*}
\left \{ 
\begin{array}{lr}
\partial _{t}\theta _{\varepsilon }-\Delta \theta _{\varepsilon }=0, & 
(x,t)\in Q_{\varepsilon }^{T}, \\ 
\varepsilon ^{-\gamma }\partial _{t}\theta _{\varepsilon }+\partial _{\nu
}\theta _{\varepsilon }=\chi _{S_{\varepsilon }^{2,T}}\varepsilon ^{-\gamma
}v, & (x,t)\in S_{\varepsilon }^{T}, \\ 
\theta _{\varepsilon }(x,0)=0, & x\in \Omega _{\varepsilon }\bigcup
S_{\varepsilon }, \\ 
\theta _{\varepsilon }(x,t)=0, & (x,t)\in \Gamma ^{T}.%
\end{array}%
\right.
\end{equation*}%
Using the definition of $\theta _{\varepsilon }$, we have 
\begin{gather*}
J_{\varepsilon }^{\prime }(v_{\varepsilon })v=\lim \limits_{\lambda
\rightarrow 0}(J_{\varepsilon }(v_{\varepsilon }+\lambda v)-J_{\varepsilon
}(v_{\varepsilon }))/\lambda = \\
=\int \limits_{Q_{\varepsilon }^{T}}\nabla \theta _{\varepsilon }\nabla
u_{\varepsilon }{dxdt}+\int \limits_{\Omega _{\varepsilon }}\theta
_{\varepsilon }(x,T)u_{\varepsilon }(x,T){dx}+ \\
+\varepsilon ^{-\gamma }\int \limits_{S_{\varepsilon }}\theta _{\varepsilon
}(x,T)u_{\varepsilon }(x,T){ds}+\varepsilon ^{-\gamma }N\int
\limits_{S_{\varepsilon }^{2,T}}v_{\varepsilon }v{dsdt}.
\end{gather*}%
Now, we use the definition of $p_{\varepsilon }$ and derive from the last
expression the identity 
\begin{equation*}
J_{\varepsilon }^{\prime }(v_{\varepsilon })v=\varepsilon ^{-\gamma }\int
\limits_{S_{\varepsilon }^{2,T}}p_{\varepsilon }v{dxdt}+\varepsilon
^{-\gamma }N\int \limits_{S_{\varepsilon }^{2,T}}v_{\varepsilon }v{dsdt}.
\end{equation*}%
As $v_{\varepsilon }$ is the optimal control, we should have $J_{\varepsilon
}^{\prime }(v_{\varepsilon })\cdot v=0$ for all $v\in
L^{2}(0,T;L^{2}(S_{\varepsilon }^{2}))$. Hence, $v_{\varepsilon
}=-N^{-1}p_{\varepsilon }$ for a.e. $(x,t)\in S_{\varepsilon }^{2,T}$. This
concludes the proof.
\end{proof}

In consequence, by Theorem~\ref{thm: init opt cont}, the optimal control
problem is characterized through the coupled system 
\begin{equation}
\left \{ 
\begin{array}{lr}
\partial _{t}u_{\varepsilon }-\Delta u_{\varepsilon }=f, & (x,t)\in
Q_{\varepsilon }^{T}, \\ 
-\partial _{t}p_{\varepsilon }-\Delta p_{\varepsilon }=-\Delta
u_{\varepsilon }, & (x,t)\in Q_{\varepsilon }^{T}, \\ 
\partial _{\nu }u_{\varepsilon }+\varepsilon ^{-\gamma }\partial
_{t}u_{\varepsilon }=-\varepsilon ^{-\gamma }N^{-1}\chi _{S_{\varepsilon
}^{2, T}}p_{\varepsilon }, & (x,t)\in S_{\varepsilon }^{T}, \\ 
\partial _{\nu }p_{\varepsilon }-\varepsilon ^{-\gamma }\partial
_{t}p_{\varepsilon }=\partial _{\nu }u_{\varepsilon }, & (x,t)\in
S_{\varepsilon }^{T}, \\ 
u_{\varepsilon }(x,0)=0, & x\in \Omega _{\varepsilon }\bigcup S_{\varepsilon
}, \\ 
p_{\varepsilon }(x,T)=u_{\varepsilon }(x,T), & x\in \Omega _{\varepsilon
}\bigcup S_{\varepsilon }, \\ 
u_{\varepsilon }(x,t)=p_{\varepsilon }(x,t)=0, & (x,t)\in \Gamma ^{T}.%
\end{array}%
\right.  \label{init coupled prob}
\end{equation}

\section{A priori estimates}

In this section, we get several a priori estimates of the state and adjoint
state. Taking $p_{\varepsilon }$ as a test function in the integral identity
for $u_{\varepsilon }$, we get 
\begin{equation}
\int \limits_{Q_{\varepsilon }^{T}}\partial _{t}u_{\varepsilon
}p_{\varepsilon }{dxdt}+\varepsilon ^{-\gamma }\int \limits_{S_{\varepsilon
}^{T}}\partial _{t}u_{\varepsilon }p_{\varepsilon }{dsdt}+\int \limits_{Q_{%
\varepsilon }^{T}}\nabla u_{\varepsilon }\nabla p_{\varepsilon }{dxdt} \\
=\int \limits_{Q_{\varepsilon }^{T}}fp_{\varepsilon }{dxdt}-N^{-1}\varepsilon
^{-\gamma }\int \limits_{S_{\varepsilon }^{2,T}}p_{\varepsilon }^{2}{dxdt}.
\label{apr estim: int ident u p}
\end{equation}%
Now, taking $u_{\varepsilon }$ as a test function in the integral identity
for $p_{\varepsilon }$, we get 
\begin{equation}
-\int \limits_{Q_{\varepsilon }^{T}}\partial _{t}p_{\varepsilon
}u_{\varepsilon }{dxdt}-\varepsilon ^{-\gamma }\int \limits_{S_{\varepsilon
}^{T}}\partial _{t}p_{\varepsilon }u_{\varepsilon }{dsdt}+\int \limits_{Q_{%
\varepsilon }^{T}}\nabla p_{\varepsilon }\nabla u_{\varepsilon }{dxdt}%
=\int \limits_{Q_{\varepsilon }^{T}}|\nabla u_{\varepsilon }|^{2}{dxdt}.
\label{apr estim: int ident p u}
\end{equation}%
Next, we subtract \eqref{apr estim: int ident u p} from 
\eqref{apr estim:
int ident p u} and obtain the expression 
\begin{equation*}
-\int \limits_{Q_{\varepsilon }^{T}}\partial _{t}(u_{\varepsilon
}p_{\varepsilon }){dxdt}-\varepsilon ^{-\gamma }\int \limits_{S_{\varepsilon
}^{T}}\partial _{t}(u_{\varepsilon }p_{\varepsilon }){dsdt}%
-N^{-1}\varepsilon ^{-\gamma }\int \limits_{S_{\varepsilon
}^{2,T}}p_{\varepsilon }^{2}{dxdt} \\
=\int \limits_{Q_{\varepsilon }^{T}}|\nabla u_{\varepsilon }|^{2}{dxdt}%
-\int \limits_{Q_{\varepsilon }^{T}}fp_{\varepsilon }{dxdt}.
\end{equation*}%
From here, we get 
\begin{gather}
\Vert \nabla u_{\varepsilon }\Vert _{L^{2}(Q_{\varepsilon }^{T})}^{2}+\Vert
u_{\varepsilon }(x,T)\Vert _{L^{2}(\Omega _{\varepsilon })}^{2}+\varepsilon
^{-\gamma }\Vert u_{\varepsilon }(x,T)\Vert _{L^{2}(S_{\varepsilon })}^{2} 
\notag \\
+N^{-1}\varepsilon ^{-\gamma }\int \limits_{S_{\varepsilon
}^{2,T}}p_{\varepsilon }^{2}{dsdt}\leq \int \limits_{Q_{\varepsilon
}^{T}}|f||p_{\varepsilon }|{dxdt}.  \label{apr estim: u estim 1}
\end{gather}%
%
%
%
%
Then, we take $p_{\varepsilon }$ as a test function in the integral identity %
\eqref{int ident init adjoint prob}, and get 
\begin{equation*}
-\frac{1}{2}\Vert u_{\varepsilon }(x,T)\Vert _{L^{2}(\Omega _{\varepsilon
})}^{2}-\frac{\varepsilon ^{-\gamma }}{2}\Vert u_{\varepsilon }(x,T)\Vert
_{L^{2}(S_{\varepsilon })}^{2}+\Vert \nabla p_{\varepsilon }\Vert
_{L^{2}(Q_{\varepsilon }^{T})}^{2}\leq \int \limits_{Q_{\varepsilon
}^{T}}\nabla u_{\varepsilon }\nabla p_{\varepsilon }{dxdt}.
\end{equation*}%
From here and \eqref{apr estim: u estim 1}, we conclude 
\begin{gather}
\Vert \nabla p_{\varepsilon }\Vert _{L^{2}(Q_{\varepsilon }^{T})}^{2}\leq
C(\Vert \nabla u_{\varepsilon }\Vert _{L^{2}(Q_{\varepsilon
}^{T})}^{2}+\Vert u_{\varepsilon }(x,T)\Vert _{L^{2}(\Omega _{\varepsilon
})}^{2}+\varepsilon ^{-\gamma }\Vert u_{\varepsilon }(x,T)\Vert
_{L^{2}(S_{\varepsilon })}^{2})  \notag \\
\leq C\int \limits_{Q_{\varepsilon }^{T}}|f||p_{\varepsilon }|{dxdt}.
\label{prelim p estim}
\end{gather}%
Here and below, constant $C$ is independent from $\varepsilon $. As $%
p_{\varepsilon }$ is in $H^{1}(\Omega _{\varepsilon },\partial \Omega )$, we
can apply Poincar\'{e}-Friedrichs's inequality 
\begin{equation*}
\Vert p_{\varepsilon }(.,t)\Vert _{L^{2}(\Omega _{\varepsilon })}\leq K\Vert
\nabla p_{\varepsilon }(.,t)\Vert _{L^{2}(\Omega _{\varepsilon })}.
\end{equation*}%
Using this inequality in the previous estimate \eqref{prelim p estim}
, we get 
\begin{equation*}
\Vert p_{\varepsilon }\Vert _{L^{2}(Q_{\varepsilon }^{T})}^{2}\leq C\Vert
f\Vert _{L^{2}(Q^{T})}^{2}.
\end{equation*}%
Now, we substitute this estimate into \eqref{apr estim: u estim 1}, and
derive the following estimate of $u_{\varepsilon }$ 
\begin{equation*}
\Vert \nabla u_{\varepsilon }\Vert _{L^{2}(Q_{\varepsilon }^{T})}^{2}+\Vert
u_{\varepsilon }(x,T)\Vert _{L^{2}(\Omega _{\varepsilon })}^{2}+\varepsilon
^{-\gamma }\Vert u_{\varepsilon }(x,T)\Vert _{L^{2}(S_{\varepsilon
})}^{2}+N^{-1}\varepsilon ^{-\gamma }\Vert p_{\varepsilon }\Vert
_{L^{2}(S_{\varepsilon }^{2,T})}^{2}\leq C\Vert f\Vert _{L^{2}(Q^{T})}^{2}.
\end{equation*}%
From here, by \eqref{prelim p estim}, we get the estimation of the gradient
of $p_{\varepsilon }$ 
\begin{equation*}
\Vert \nabla p_{\varepsilon }\Vert _{L^{2}(Q_{\varepsilon }^{T})}^{2}\leq
C\Vert f\Vert _{L^{2}(Q^{T})}^{2}.
\end{equation*}%
Now we derive some estimates on the time derivatives of $u_{\varepsilon }$
and $p_{\varepsilon }$. We use Galerkin's approach and construct $%
u_{\varepsilon }^{m}$ and $p_{\varepsilon }^{m}$, where $m=1,2,\dots $, that
are approximations to $u_{\varepsilon }$ and $p_{\varepsilon }$. Note that,
for such approximations, we have the same estimates derived above on $%
u_{\varepsilon }$ and $p_{\varepsilon }$. We take now $\partial
_{t}u_{\varepsilon }^{m}$ as a test function in the equations for $%
u_{\varepsilon }^{m}$, and integrating from $0$ to an arbitrary $\tau \in
\lbrack 0,T]$, we get 
\begin{gather*}
\Vert \partial _{t}u_{\varepsilon }^{m}\Vert _{L^{2}(Q_{\varepsilon
}^{T})}^{2}+\varepsilon ^{-\gamma }\Vert \partial _{t}u_{\varepsilon
}^{m}\Vert _{L^{2}(S_{\varepsilon }^{T})}^{2}+\max \limits_{t\in \lbrack
0,T]}\Vert \nabla u_{\varepsilon }^{m}\Vert _{L^{2}(\Omega _{\varepsilon
})}^{2} \\
\leq K(\int \limits_{Q_{\varepsilon }^{T}}|f||\partial _{t}u_{\varepsilon
}^{m}|{dxdt}+\varepsilon ^{-\gamma }\int \limits_{S_{\varepsilon
}^{2,T}}|p_{\varepsilon }^{m}||\partial _{t}u_{\varepsilon }^{m}|{dsdt}) \\
\leq \frac{1}{2}\Vert \partial _{t}u_{\varepsilon }^{m}\Vert
_{L^{2}(Q_{\varepsilon }^{T})}^{2}+\frac{\varepsilon ^{-\gamma }}{2}\Vert
\partial _{t}u_{\varepsilon }^{m}\Vert _{L^{2}(S_{\varepsilon
}^{T})}^{2}+K(\varepsilon ^{-\gamma }\Vert p_{\varepsilon }^{m}\Vert
_{L^{2}(S_{\varepsilon }^{2,T})}^{2}+\Vert f\Vert _{L^{2}(Q^{T})}^{2}),
\end{gather*}%
where constant $K$ is independent of $\varepsilon $ and $m$. From here, we
immediately derive 
\begin{equation*}
\Vert \partial _{t}u_{\varepsilon }^{m}\Vert _{L^{2}(Q_{\varepsilon
}^{T})}^{2}+\varepsilon ^{-\gamma }\Vert \partial _{t}u_{\varepsilon
}^{m}\Vert _{L^{2}(S_{\varepsilon }^{T})}^{2}+\max \limits_{t\in \lbrack
0,T]}\Vert \nabla u_{\varepsilon }^{m}\Vert _{L^{2}(\Omega _{\varepsilon
})}^{2}\leq K\Vert f\Vert _{L^{2}(Q^{T})}^{2}.
\end{equation*}%
Then, passing to the limit, as $m\rightarrow \infty ,$ in this estimate we
have 
\begin{equation*}
\Vert \partial _{t}u_{\varepsilon }\Vert _{L^{2}(Q_{\varepsilon
}^{T})}^{2}+\varepsilon ^{-\gamma }\Vert \partial _{t}u_{\varepsilon }\Vert
_{L^{2}(S_{\varepsilon }^{T})}^{2}+\max \limits_{t\in \lbrack 0,T]}\Vert
\nabla u_{\varepsilon }\Vert _{L^{2}(\Omega _{\varepsilon })}^{2}\leq K\Vert
f\Vert _{L^{2}(Q^{T})}^{2}.
\end{equation*}%
Moreover, if we use $\partial _{t}p_{\varepsilon }^{m}$ as a test function
in the equation for $p_{\varepsilon }^{m}$, we get, for a.e. $t$ 
\begin{gather*}
-\Vert \partial _{t}p_{\varepsilon }^{m}\Vert _{L^{2}(\Omega _{\varepsilon
})}^{2}-\varepsilon ^{-\gamma }\Vert \partial _{t}p_{\varepsilon }^{m}\Vert
_{L^{2}(S_{\varepsilon })}^{2}+(\nabla p_{\varepsilon }^{m},\partial
_{t}\nabla p_{\varepsilon }^{m})_{L^{2}(\Omega _{\varepsilon })} \\
=-(\partial _{t}u_{\varepsilon }^{m},\partial _{t}p_{\varepsilon
}^{m})_{L^{2}(\Omega _{\varepsilon })}-\varepsilon ^{-\gamma }(\partial
_{t}u_{\varepsilon }^{m},\partial _{t}p_{\varepsilon
}^{m})_{L^{2}(S_{\varepsilon })} \\
+(f,\partial _{t}p_{\varepsilon }^{m})_{L^{2}(\Omega _{\varepsilon
})}-N^{-1}\varepsilon ^{-\gamma }(p_{\varepsilon }^{m},\partial
_{t}p_{\varepsilon }^{m})_{L^{2}(S_{\varepsilon }^{2})}.
\end{gather*}%
Integrating this equality with respect to $t$ from $0$ to $T$, we obtain 
\begin{gather*}
-\Vert \partial _{t}p_{\varepsilon }^{m}\Vert _{L^{2}(Q_{\varepsilon
}^{T})}^{2}+\frac{1}{2}\Vert \nabla p_{\varepsilon }^{m}(\cdot ,T)\Vert
_{L^{2}(\Omega _{\varepsilon })}^{2}-\frac{1}{2}\Vert \nabla p_{\varepsilon
}^{m}(\cdot ,0)\Vert _{L^{2}(\Omega _{\varepsilon })}^{2}-\varepsilon
^{-\gamma }\Vert \partial _{t}p_{\varepsilon }^{m}\Vert
_{L^{2}(S_{\varepsilon }^{T})}^{2} \\
=-\int \limits_{Q_{\varepsilon }^{T}}\partial _{t}u_{\varepsilon
}^{m}\partial _{t}p_{\varepsilon }^{m}{dxdt}-\varepsilon ^{-\gamma
}\int \limits_{S_{\varepsilon }^{T}}\partial _{t}u_{\varepsilon }^{m}\partial
_{t}p_{\varepsilon }^{m}{dsdt} \\
+\int \limits_{Q_{\varepsilon }^{T}}f\partial _{t}p_{\varepsilon }^{m}{dxdt}%
-N^{-1}\varepsilon ^{-\gamma }\int \limits_{S_{\varepsilon
}^{2,T}}p_{\varepsilon }^{m}\partial _{t}p_{\varepsilon }^{m}{dsdt} \\
=-\int \limits_{Q_{\varepsilon }^{T}}\partial _{t}u_{\varepsilon
}^{m}\partial _{t}p_{\varepsilon }^{m}{dxdt}-\varepsilon ^{-\gamma
}\int \limits_{S_{\varepsilon }^{T}}\partial _{t}u_{\varepsilon }^{m}\partial
_{t}p_{\varepsilon }^{m}{dsdt} \\
+\int \limits_{Q_{\varepsilon }^{T}}f\partial _{t}p_{\varepsilon }^{m}{dxdt}%
+N^{-1}\frac{\varepsilon ^{-\gamma }}{2}\Vert p_{\varepsilon }^{m}(x,0)\Vert
_{L^{2}(S_{\varepsilon }^{2})}^{2}-N^{-1}\frac{\varepsilon ^{-\gamma }}{2}%
\Vert u_{\varepsilon }^{m}(x,T)\Vert _{L^{2}(S_{\varepsilon }^{2})}^{2}.
\end{gather*}%
From here, we derive 
\begin{gather*}
\Vert \partial _{t}p_{\varepsilon }^{m}\Vert _{L^{2}(Q_{\varepsilon
}^{T})}^{2}+\varepsilon ^{-\gamma }\Vert \partial _{t}p_{\varepsilon
}^{m}\Vert _{L^{2}(S_{\varepsilon }^{T})}^{2}+\frac{1}{2}\Vert \nabla
p_{\varepsilon }^{m}(\cdot ,0)\Vert _{L^{2}(\Omega _{\varepsilon })}^{2}+%
\frac{\varepsilon ^{-\gamma }}{2N}\Vert p_{\varepsilon }^{m}(x,0)\Vert
_{L^{2}(\omega _{\varepsilon })}^{2} \\
\leq \frac{1}{2}\Vert \nabla u_{\varepsilon }^{m}(\cdot ,T)\Vert
_{L^{2}(\Omega _{\varepsilon })}^{2}+\Vert \partial _{t}u_{\varepsilon
}^{m}\Vert _{L^{2}(Q_{\varepsilon }^{T})}\Vert \partial _{t}p_{\varepsilon
}^{m}\Vert _{L^{2}(Q_{\varepsilon }^{T})} \\
+\varepsilon ^{-\gamma }\Vert \partial _{t}u_{\varepsilon }^{m}\Vert
_{L^{2}(S_{\varepsilon }^{T})}\Vert \partial _{t}p_{\varepsilon }^{m}\Vert
_{L^{2}(S_{\varepsilon }^{T})}+\Vert f\Vert _{L^{2}(Q_{\varepsilon
}^{T})}\Vert \partial _{t}p_{\varepsilon }^{m}\Vert _{L^{2}(Q_{\varepsilon
}^{T})}.
\end{gather*}%
Finally, using estimates obtained for $u_{\varepsilon }^{m}$, we conclude 
\begin{equation}
\Vert \partial _{t}p_{\varepsilon }^{m}\Vert _{L^{2}(Q_{\varepsilon
}^{T})}^{2}+\varepsilon ^{-\gamma }\Vert \partial _{t}p_{\varepsilon
}^{m}\Vert _{L^{2}(S_{\varepsilon }^{T})}^{2}\leq K\Vert f\Vert
_{L^{2}(Q^{T})}^{2}.
\end{equation}%
Passing to the limit, as $m\rightarrow \infty $, we get the estimation of $%
\partial _{t}p_{\varepsilon }$ 
\begin{equation}
\Vert \partial _{t}p_{\varepsilon }\Vert _{L^{2}(Q_{\varepsilon
}^{T})}^{2}+\varepsilon ^{-\gamma }\Vert \partial _{t}p_{\varepsilon }\Vert
_{L^{2}(S_{\varepsilon }^{T})}^{2}\leq K\Vert f\Vert _{L^{2}(Q^{T})}^{2}.
\end{equation}%
Having proved some a priori estimates of $u_{\varepsilon }$ and $%
v_{\varepsilon }$, we proceed with the extension of these solution to the
whole cylinder $Q^{T}$. We know (see, e.g. \cite{DiGoShBook} and its
references) that there exists an extension operator $P_{\varepsilon
}:H^{1}(Q_{\varepsilon }^{T})\rightarrow H^{1}(Q^{T})$ such that the
following estimate holds 
\begin{equation*}
\Vert P_{\varepsilon }(u)\Vert _{H^{1}(Q^{T})}\leq \Vert u\Vert
_{H^{1}(Q_{\varepsilon }^{T})}.
\end{equation*}%
Let $\tilde{u}_{\varepsilon }$, $\tilde{p}_{\varepsilon }$ be the extensions
of the functions $u_{\varepsilon }$, $p_{\varepsilon }$. Then we get the
following estimates 
\begin{gather}
\Vert \partial _{t}\tilde{u}_{\varepsilon }\Vert _{L^{2}(Q^{T})}^{2}+\Vert
\nabla \tilde{u}_{\varepsilon }\Vert _{L^{2}(Q^{T})}^{2}\leq K(\Vert
\partial _{t}u_{\varepsilon }\Vert _{L^{2}(Q_{\varepsilon }^{T})}^{2}+\Vert
\nabla u_{\varepsilon }\Vert _{L^{2}(Q_{\varepsilon }^{T})}^{2}),
\label{ext u estim} \\
\Vert \partial _{t}\tilde{p}_{\varepsilon }\Vert _{L^{2}(Q^{T})}^{2}+\Vert
\nabla \tilde{p}_{\varepsilon }\Vert _{L^{2}(Q^{T})}^{2}\leq K(\Vert
\partial _{t}p_{\varepsilon }\Vert _{L^{2}(Q_{\varepsilon }^{T})}^{2}+\Vert
\nabla p_{\varepsilon }\Vert _{L^{2}(Q_{\varepsilon }^{T})}^{2}).
\label{ext p estim}
\end{gather}%
The obtained estimates imply that there exist some subsequences (still
denoted as the original) and some limit functions, $u_{0}$ and $p_{0},$ such
that 
\begin{equation}
\begin{gathered} \tilde{u}_{\varepsilon }\rightharpoonup u_{0}\mbox{ weakly in }L^{2}(0,T;H_{0}^{1}(\Omega )),\quad \partial _{t}\tilde{u}_{\varepsilon }\rightharpoonup \partial _{t}u_{0}\mbox{ weakly
in }L^{2}(Q^{T}), \\ \tilde{p}_{\varepsilon }\rightharpoonup p_{0}\mbox{ weakly in }L^{2}(0,T;H_{0}^{1}(\Omega )),\quad \partial _{t}\tilde{p}_{\varepsilon }\rightharpoonup \partial _{t}p_{0}\mbox{ weakly
in }L^{2}(Q^{T}). \end{gathered}  \label{limit func def}
\end{equation}%
Moreover, the embedding theorem implies also that $\tilde{u}_{\varepsilon
}\rightarrow u_{0}$ and $\tilde{p}_{\varepsilon }\rightarrow p_{0}$ in $%
L^{2}(Q^{T})$.

\bigskip

In the rest of the paper we will give the characterization of these limit
functions and derive a formulation of the homogenized optimal control
problem.

\section{Auxiliary non-local in time operators $G,$ $H$ and $M$}

As already been noted in the Introduction, to state the homogenization
results we need to introduce some auxiliary problems. The first non-local
operator $M(\varphi )$ already was used in our previous study related to the
case of distributed controls (\cite{DiPoShAttouch}), 
\begin{equation}
\left \{ 
\begin{array}{lr}
\partial _{t}M(\varphi )+\mathcal{B}_{n}M(\varphi )=\varphi , & t\in (0,T),
\\ 
M(\varphi )(0)=0, & 
\end{array}%
\right.   \label{tilde M prob}
\end{equation}%
where $\mathcal{B}_{n}=(n-2)C_{0}^{-1}$, and $\varphi \in L^{2}(0,T)$ is
given. This operator is related to the region that is the complementary to
the one were the controls are localized. We also consider the adjoint
operator $M^{\ast }(\varphi )$ given by%
\begin{equation}
\left \{ 
\begin{array}{lr}
-\partial _{t}M^{\ast }(\varphi )+\mathcal{B}_{n}M^{\ast }(\varphi )=\varphi
, & t\in (0,T), \\ 
M^{\ast }(\varphi )(T)=0. & 
\end{array}%
\right.   \label{adj tilde M prob}
\end{equation}

For the domain that contains particles to which the controls are applied, we
introduce operator $G(\varphi )$ that satisfies the similar problem to %
\eqref{tilde M prob}, but with a different coefficient 
\begin{equation}
\left \{ 
\begin{array}{lr}
\partial _{t}G(\varphi )+(\mathcal{B}_{n}+N^{-1})G(\varphi )=\varphi , & 
t\in (0,T), \\ 
G(\varphi )(0)=0.\quad  & 
\end{array}%
\right.   \label{G prob 2}
\end{equation}%
This operator $G(\varphi )$ can be explicitly written as%
\begin{equation*}
G(\varphi )(t)=\int \limits_{0}^{t}e^{-(\mathcal{B}_{n}+N^{-1})(t-s)}\varphi
(s)ds,
\end{equation*}%
which show the non-local in time nature. We define its adjoint operator $%
G^{\ast }$ as the solution of the problem adjoint to \eqref{G prob 2} 
\begin{equation}
\left \{ 
\begin{array}{lr}
-\partial _{t}G^{\ast }(\varphi )+(\mathcal{B}_{n}+N^{-1})G^{\ast }(\varphi
)=\varphi , & t\in (0,T), \\ 
G^{\ast }(\varphi )(x,T)=0. & 
\end{array}%
\right.   \label{adj G prob 2}
\end{equation}%
Nevertheless, it turns out that for the case of boundary controls, as we are
assuming in problem \eqref{init state prob}, we will need to define some new
operators $H$, and $H^{\ast }$, coupled with $G^{\ast }$ and $G,$
respectively, in the following way 
\begin{equation}
\left \{ 
\begin{array}{lr}
-\partial _{t}H^{\ast }(\varphi )+(\mathcal{B}_{n}+N^{-1})H^{\ast }(\varphi
)-N^{-1}(\mathcal{B}_{n}+N^{-1})G(H^{\ast }(\varphi ))=\varphi , & t\in
(0,T), \\ 
H^{\ast }(\varphi )(T)=0, & 
\end{array}%
\right.   \label{adj H prob 2}
\end{equation}%
and 
\begin{equation}
\left \{ 
\begin{array}{lr}
\partial _{t}H(\varphi )+(\mathcal{B}_{n}+N^{-1})H(\varphi )-N^{-1}(\mathcal{%
B}_{n}+N^{-1})G^{\ast }(H(\varphi ))=\varphi , & t\in (0,T), \\ 
H(\varphi )(0)=0. & 
\end{array}%
\right.   \label{H prob 2}
\end{equation}%
Notice that both problems are now non-local in time, but since the operators 
$G$ and $G^{\ast }$ are globally Lipschitz continuous on $L^{2}(0,T)$, we
get the existence and uniqueness of the associate solutions once that $%
\varphi \in L^{2}(0,T)$ is given.


It is straightforward to show that the operator $G$ is the adjoint operator
to $G^{\ast }$, i.e. 
\begin{equation}
\int \limits_{0}^{T}G(\varphi )\psi {dt}=\int \limits_{0}^{T}\varphi G^{\ast
}(\psi ){dt},  \label{G adj rel}
\end{equation}%
for any arbitrary functions $\varphi ,\, \psi \in L^{2}(0,T)$. Indeed, we
have 
\begin{gather*}
\int \limits_{0}^{T}G(\varphi )\psi {dt}=\int \limits_{0}^{T}G(\varphi
)(-\partial _{t}G^{\ast }(\psi )+(\mathcal{B}_{n}+N^{-1})G^{\ast }(\psi )){dt%
} \\
=\int \limits_{0}^{T}(\mathcal{B}_{n}+N^{-1})G(\varphi )G^{\ast }(\psi ){dt}%
-G(\varphi )G^{\ast }(\varphi )\Big \vert_{0}^{T}+\int
\limits_{0}^{T}\partial _{t}G(\varphi )G^{\ast }(\psi ){dt} \\
=\int \limits_{0}^{T}(\partial _{t}G(\varphi )+(\mathcal{B}%
_{n}+N^{-1})G(\varphi ))G^{\ast }(\psi ){dt}=\int \limits_{0}^{T}\varphi
G^{\ast }(\psi ){dt}.
\end{gather*}%
Similarly to the above argument we know that $M$ is the adjoint operator to $%
M^{\ast }$%
\begin{equation*}
\int \limits_{0}^{T}M(\varphi )\psi {dt}=\int \limits_{0}^{T}\varphi M^{\ast
}(\psi ){dt}.
\end{equation*}

The case of operators $H$ and $H^{\ast }$ is less trivial. Nevertheless, we
also have that, for any arbitrary functions $\varphi ,\, \psi \in L^{2}(0,T),
$ 
\begin{equation}
\int \limits_{0}^{T}H(\varphi )\psi {dt}=\int \limits_{0}^{T}\varphi H^{\ast
}(\psi ){dt}.  \label{H adj rel}
\end{equation}%
Indeed, we make the following transformations 
\begin{gather*}
\int \limits_{0}^{T}H(\varphi )\psi {dt}=\int \limits_{0}^{T}H(\varphi
)(-\partial _{t}H^{\ast }(\psi )+(\mathcal{B}_{n}+N^{-1})H^{\ast }(\psi
)-N^{-1}(\mathcal{B}_{n}+N^{-1})G(H^{\ast }(\psi ))){dt} \\
=\int \limits_{0}^{T}\partial _{t}H(\varphi )H^{\ast }(\psi ){dt}+(\mathcal{B%
}_{n}+N^{-1})\int \limits_{0}^{T}H(\varphi )H^{\ast }(\psi ){dt} \\
-N^{-1}(\mathcal{B}_{n}+N^{-1})\int \limits_{0}^{T}G^{\ast }(H(\varphi
))H^{\ast }(\psi ){dt} \\
=\int \limits_{0}^{T}(\partial _{t}H(\varphi )+(\mathcal{B}%
_{n}+N^{-1})H(\varphi )-N^{-1}(\mathcal{B}_{n}+N^{-1})G^{\ast }(H(\varphi
)))H^{\ast }(\psi ){dt}=\int \limits_{0}^{T}\varphi H^{\ast }(\psi ){dt.}
\end{gather*}

In addition to the above properties it will be useful to get some other
relations among the above operators.

\begin{lemma}
\label{lem:aux relations} For the functions $G$, $G^{\ast }$, $H$ and $%
H^{\ast }$ introduced in \eqref{G prob 2}- \eqref{adj H prob 2}, we have the
following relations

i) $G^{\ast }(H(\varphi ))=H^{\ast }(G(\varphi ))$, and $G(H^{\ast }(\varphi
))=H(G^{\ast }(\varphi ))$,

ii) $H(\varphi )=G(\varphi )+N^{-1}(\mathcal{B}_{n}+N^{-1})G(H^{\ast
}(G(\varphi )))$, and we also have the adjoint version $H^{\ast }(\varphi
)=G^{\ast }(\varphi )+N^{-1}(\mathcal{B}_{n}+N^{-1})G^{\ast }(H(G^{\ast
}(\varphi )))$.
\end{lemma}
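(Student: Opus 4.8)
The plan is to recast the definitions of $H$ and $H^{\ast}$ as genuine operator (resolvent-type) identities on $L^{2}(0,T)$ and then reduce both claims to a single algebraic ``push-through'' identity for bounded linear operators. Throughout I write $\kappa = N^{-1}(\mathcal{B}_{n}+N^{-1})$ and regard $G$ and $G^{\ast}$ as the bounded solution operators inverting $\partial_{t}+(\mathcal{B}_{n}+N^{-1})$ (with zero datum at $t=0$) and $-\partial_{t}+(\mathcal{B}_{n}+N^{-1})$ (with zero datum at $t=T$), respectively, as recorded in \eqref{G prob 2}--\eqref{adj G prob 2}.

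First I would rewrite the coupled problems \eqref{adj H prob 2}--\eqref{H prob 2}. Applying $G$ to the equation defining $H$ and using $H(\varphi)(0)=0$ (so that $G$ acts as the exact left inverse of $\partial_{t}+(\mathcal{B}_{n}+N^{-1})$) turns the differential equation into the operator identity $(I-\kappa\, G\circ G^{\ast})H=G$; symmetrically, applying $G^{\ast}$ to the equation for $H^{\ast}$ gives $(I-\kappa\, G^{\ast}\circ G)H^{\ast}=G^{\ast}$. The existence and uniqueness already granted for the $H$- and $H^{\ast}$-problems is exactly the statement that $I-\kappa\, GG^{\ast}$ and $I-\kappa\, G^{\ast}G$ are invertible, so that $H=(I-\kappa\, GG^{\ast})^{-1}G$ and $H^{\ast}=(I-\kappa\, G^{\ast}G)^{-1}G^{\ast}$.

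The core step for (i) is the elementary identity $(I-\kappa\, G^{\ast}G)\,G^{\ast}=G^{\ast}\,(I-\kappa\, GG^{\ast})$, valid because both sides equal $G^{\ast}-\kappa\, G^{\ast}GG^{\ast}$. Multiplying on the left by $(I-\kappa\, G^{\ast}G)^{-1}$ and on the right by $(I-\kappa\, GG^{\ast})^{-1}$ yields $G^{\ast}(I-\kappa\, GG^{\ast})^{-1}=(I-\kappa\, G^{\ast}G)^{-1}G^{\ast}$; composing with $G$ on the right then gives $G^{\ast}\circ H=(I-\kappa\, G^{\ast}G)^{-1}G^{\ast}G=H^{\ast}\circ G$, i.e. $G^{\ast}(H(\varphi))=H^{\ast}(G(\varphi))$. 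The second relation $G(H^{\ast}(\varphi))=H(G^{\ast}(\varphi))$ comes from the same push-through with the roles of $G$ and $G^{\ast}$ interchanged. For (ii), I would simply rearrange $(I-\kappa\, GG^{\ast})H=G$ into $H(\varphi)=G(\varphi)+\kappa\, G(G^{\ast}(H(\varphi)))$ and substitute the already-proved (i); this produces $H(\varphi)=G(\varphi)+N^{-1}(\mathcal{B}_{n}+N^{-1})G(H^{\ast}(G(\varphi)))$, and the adjoint version follows identically from $(I-\kappa\, G^{\ast}G)H^{\ast}=G^{\ast}$ together with the second relation of (i).

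The main thing to get right is the legitimacy of passing from the integro-differential formulations to the resolvent identities and of the push-through manipulation: both require that $I-\kappa\, GG^{\ast}$ and $I-\kappa\, G^{\ast}G$ be invertible, which is precisely the well-posedness of \eqref{adj H prob 2}--\eqref{H prob 2} established earlier and not an independent difficulty. If one prefers to avoid invoking abstract invertibility, (i) can instead be proved by hand: setting $\eta:=G^{\ast}(H(\varphi))-H^{\ast}(G(\varphi))$ and using $H(\varphi)=G(\varphi)+\kappa\, G(G^{\ast}(H(\varphi)))$ to eliminate $H(\varphi)$, one checks that $\eta$ solves the homogeneous backward problem $-\partial_{t}\eta+(\mathcal{B}_{n}+N^{-1})\eta-\kappa\, G(\eta)=0$ with $\eta(T)=0$, whence $\eta\equiv 0$ by uniqueness for the $H^{\ast}$-problem; then (ii) follows as before. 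This concrete route trades the operator algebra for one explicit ODE computation, and that computation is the step I would double-check most carefully.
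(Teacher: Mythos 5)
Your proof is correct, but it takes a genuinely different route from the paper's for part (i). The paper proves $G^{\ast}(H(\varphi))=H^{\ast}(G(\varphi))$ by showing that $A=G^{\ast}(H(\varphi))$ and $\tilde{B}=H^{\ast}(G(\varphi))$ each solve the \emph{same} coercive second-order two-point boundary value problem $-\partial_{tt}^{2}y+\mathcal{B}_{n}(\mathcal{B}_{n}+N^{-1})y=\varphi$, $y(T)=0$, $-\partial_{t}y(0)+(\mathcal{B}_{n}+N^{-1})y(0)=0$, and concludes by an energy-based uniqueness argument for that BVP; you instead stay at the level of first-order operator algebra, recasting the definitions as $(I-\kappa GG^{\ast})H=G$ and $(I-\kappa G^{\ast}G)H^{\ast}=G^{\ast}$ with $\kappa=N^{-1}(\mathcal{B}_{n}+N^{-1})$ and using the push-through identity $(I-\kappa G^{\ast}G)G^{\ast}=G^{\ast}(I-\kappa GG^{\ast})$. (Part (ii) is derived from (i) in essentially the same one-line way in both treatments.) Your route is shorter and more transparent; the paper's pays off later, since the second-order BVP characterization of $G^{\ast}(H(\cdot))$ established in this proof is reused explicitly in the proofs of the cost-functional limit and of the controls-convergence theorem, so it is not wasted effort. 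One small imprecision to fix: well-posedness of the $H$- and $H^{\ast}$-problems is not literally "exactly" invertibility of $I-\kappa GG^{\ast}$ and $I-\kappa G^{\ast}G$ on $L^{2}(0,T)$ — it gives injectivity (any element of the kernel lies in the range of $G$, resp.\ $G^{\ast}$, hence solves the homogeneous $H$-, resp.\ $H^{\ast}$-problem and vanishes) and solvability only for right-hand sides in the range of $G$, resp.\ $G^{\ast}$; full surjectivity requires an extra word (e.g.\ the Fredholm alternative, since $GG^{\ast}$ is Hilbert--Schmidt, hence compact). But this is cosmetic rather than a gap: the push-through identity applied to $H(\varphi)$ shows $G^{\ast}(H(\varphi))$ and $H^{\ast}(G(\varphi))$ solve the same equation $(I-\kappa G^{\ast}G)y=G^{\ast}(G(\varphi))$, and injectivity alone identifies them — which is precisely what your concrete "by hand" variant with $\eta=G^{\ast}(H(\varphi))-H^{\ast}(G(\varphi))$ does, and that computation checks out.
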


\begin{proof}
We start with the proof of \textit{i)} (the relations given in \textit{ii)}
are the direct consequences of the given in i)). We consider two coupled
auxiliary linear systems. The first one is a system coupling the functions $%
A(t)=G^{\ast }(H(\varphi ))$ and $B(t)=H(\varphi )$. From \eqref{adj G prob
2} and \eqref{H prob 2}, we get 
\begin{equation}
\left \{ 
\begin{array}{ll}
-\partial _{t}A+(\mathcal{B}_{n}+N^{-1})A=B, & t\in (0,T), \\ 
\partial _{t}B+(\mathcal{B}_{n}+N^{-1})B-N^{-1}(\mathcal{B}%
_{n}+N^{-1})A=\varphi , & t\in (0,T), \\ 
A(T)=0,\quad B(0)=0. & 
\end{array}%
\right.  \label{lem:func sys 1}
\end{equation}%
Analogously, from \eqref{G prob 2} and \eqref{adj H prob 2}, we get a second
system coupling the functions, $\tilde{A}(t)=G(H^{\ast }(G(\varphi )))$ and $%
\tilde{B}(t)=H^{\ast }(G(\varphi ))$: 
\begin{equation}
\left \{ 
\begin{array}{ll}
\partial _{t}\tilde{A}+(\mathcal{B}_{n}+N^{-1})\tilde{A}=\tilde{B}, & t\in
(0,T), \\ 
-\partial _{t}\tilde{B}+(\mathcal{B}_{n}+N^{-1})\tilde{B}-N^{-1}(\mathcal{B}%
_{n}+N^{-1})\tilde{A}=G(\varphi ), & t\in (0,T), \\ 
\tilde{A}(0)=0,\quad \tilde{B}(T)=0. & 
\end{array}%
\right.  \label{lem:func sys 2}
\end{equation}

From the system~\eqref{lem:func sys 1}, we can derive a linear second order
ODE problem on the function $A$. To do this, we substitute the expression
for $B$ from the first equation of the system \eqref{lem:func sys 1} into
the second one. Thus, we get 
\begin{equation*}
-\partial _{tt}^{2}A+(\mathcal{B}_{n}+N^{-1})\partial _{t}A-(\mathcal{B}%
_{n}+N^{-1})\partial _{t}A+(\mathcal{B}_{n}+N^{-1})^{2}A-N^{-1}(\mathcal{B}%
_{n}+N^{-1})A=\varphi ,
\end{equation*}%
and, simplifying, we get 
\begin{equation}
-\partial _{tt}^{2}A+\mathcal{B}_{n}(\mathcal{B}_{n}+N^{-1})A=\varphi .
\label{second order ODE}
\end{equation}%
Also, substituting $B$ written in terms of $A$ into $B(0)=0$, we get a
condition on $A(0)$ (recall that we already have the condition at $t=T$ from
the definition of $A$). Hence, the two boundary conditions are 
\begin{equation}
A(T)=0,\quad -\partial _{t}A(0)+(\mathcal{B}_{n}+N^{-1})A(0)=0.
\label{bound cond ODE}
\end{equation}%
This is a linear coercive equation which have uniqueness of solutions. For
instance, if we consider the homogeneous case ($\varphi \equiv 0$) in the
equation \eqref{second order ODE} we get that obviously, the trivial
solution satisfies this problem. Moreover, by multiplying the equation by $A$
and integrating from $0$ to $T$, we get 
\begin{equation*}
\int \limits_{0}^{T}|\partial _{t}A|^{2}{dt}+\mathcal{B}_{n}(\mathcal{B}%
_{n}+N^{-1})\int \limits_{0}^{T}A^{2}{dt}+(\mathcal{B}_{n}+N^{-1})A^{2}(0)=0,
\end{equation*}%
and we immediately conclude that, necessarily, $A\equiv 0$. Thus, for any $%
\varphi \in L^{2}(0,T)$, the inhomogeneous problem has also a unique
solution.

Let us obtain now the ODE satisfied by the function $\tilde{B}(t)$. From the
second equation of \eqref{lem:func sys 2}, we write $\tilde{A}$ in terms of $%
\tilde{B}$ 
\begin{equation*}
\tilde{A}=-N(\mathcal{B}_{n}+N^{-1})^{-1}\partial _{t}\tilde{B}+N\tilde{B}-N(%
\mathcal{B}_{n}+N^{-1})^{-1}G(\varphi ).
\end{equation*}%
Substituting this expression into the first equation of 
\eqref{lem:func sys
2}, we derive 
\begin{gather*}
-N(\mathcal{B}_{n}+N^{-1})^{-1}\partial _{tt}^{2}\tilde{B}+N\partial _{t}%
\tilde{B}-N(\mathcal{B}_{n}+N^{-1})^{-1}\partial _{t}G(\varphi ) \\
-N\partial _{t}\tilde{B}+(\mathcal{B}_{n}+N^{-1})N\tilde{B}-NG(\varphi )=%
\tilde{B}.
\end{gather*}%
Combining similar terms and using the definition of $G(\varphi )$, we get 
\begin{equation*}
-\partial _{tt}^{2}\tilde{B}+\mathcal{B}_{n}(\mathcal{B}_{n}+N^{-1})\tilde{B}%
=\partial _{t}G(\varphi )+(\mathcal{B}_{n}+N^{-1})G(\varphi )=\varphi .
\end{equation*}%
From condition $\tilde{A}(0)=0$, we conclude that 
\begin{equation*}
-\partial _{t}\tilde{B}(0)+(\mathcal{B}_{n}+N^{-1})\tilde{B}(0)=0,\quad 
\tilde{B}(T)=0.
\end{equation*}%
Therefore, we get exactly the same linear problem as for the function $A$.
But, since the solution to this problem is unique, we get that $A=\tilde{B},$
or in other terms $G^{\ast }(H(\varphi ))=H^{\ast }(G(\varphi ))$. This
concludes the proof of the first relation in \textit{(i)}.

The part \textit{(ii)} can be proved using some similar arguments. Using the
definition of $H^{\ast }$, we have that 
\begin{equation*}
G(\varphi )+N^{-1}(\mathcal{B}_{n}+N^{-1})G(H^{\ast }(G(\varphi
)))=-\partial _{t}H^{\ast }(G(\varphi ))+(\mathcal{B}_{n}+N^{-1})H^{\ast
}(G(\varphi )).
\end{equation*}%
We use \textit{(i)} and substitute $H^{\ast }(G(\varphi ))$ with $G^{\ast
}(H(\varphi ))$ in the right-hand side, and using the definition of $G^{\ast
}$, we get 
\begin{equation*}
G(\varphi )+N^{-1}(\mathcal{B}_{n}+N^{-1})G(H^{\ast }(G(\varphi
)))=-\partial _{t}G^{\ast }(H(\varphi ))+(\mathcal{B}_{n}+N^{-1})G^{\ast
}(H(\varphi ))=H(\varphi ).
\end{equation*}%
The second relation in is proved in a similar way. This concludes the proof.
\end{proof}

\section{Statement of the homogenization theorems}

Now, we are in a position to state the main theorem that characterizes the
pair of functions $(u_0, p_0)$ given by \eqref{limit func def}.

\begin{theorem}
\label{homogenization theorem} Let $n\geq 3$, $a_{\varepsilon
}=C_{0}\varepsilon ^{\gamma }$, where $C_{0}>0$, $\gamma =n/(n-2)$. If the
pair $(u_{\varepsilon },p_{\varepsilon })$ is the solution to the problem 
\eqref{init
coupled prob}, then, the pair $(u_{0},p_{0})$ is a solution to the system 
\begin{equation}
\left \{ 
\begin{array}{lr}
\partial _{t}u_{0}-\Delta u_{0}+\mathcal{A}_{n}(u_{0}-\mathcal{B}%
_{n}H(u_{0}))\chi _{\omega ^{T}} &  \\ 
+\mathcal{A}_{n}(u_{0}-\mathcal{B}_{n}M(u_{0}))\chi _{(\Omega \setminus 
\overline{\omega })\times (0,T)}=f-N^{-1}\mathcal{A}_{n}\mathcal{B}%
_{n}H(G^{\ast }(p_{0}))\chi _{\omega ^{T}}, & (x,t)\in Q^{T}, \\ 
-\partial _{t}p_{0}-\Delta p_{0} &  \\ 
+\mathcal{A}_{n}(p_{0}-\mathcal{B}_{n}H^{\ast }(p_{0}))\chi _{\omega ^{T}}+%
\mathcal{A}_{n}(p_{0}-\mathcal{B}_{n}M^{\ast }(p_{0}))\chi _{(\Omega
\setminus \overline{\omega })\times (0,T)} &  \\ 
=-\Delta u_{0}+\mathcal{A}_{n}(u_{0}-\mathcal{B}_{n}(\mathcal{B}%
_{n}+N^{-1})G^{\ast }(H(u_{0}))\chi _{\omega ^{T}} &  \\ 
+\mathcal{A}_{n}(u_{0}-\mathcal{B}_{n}^{2}M^{\ast }(M(u_{0})))\chi _{(\Omega
\setminus \overline{\omega })\times (0,T)}, & (x,t)\in Q^{T} \\ 
u_{0}(x,0)=0, & x\in \Omega , \\ 
u_{0}(x,t)=0, & (x,t)\in \Gamma ^{T}, \\ 
p(x,T)=u_{0}(x,T), & x\in \Omega , \\ 
p(x,t)=0, & (x,t)\in \Gamma ^{T}.%
\end{array}%
\right.  \label{homogenized system}
\end{equation}%
Moreover, if we introduce the limit state problem 
\begin{equation}
\left \{ 
\begin{array}{lr}
\partial _{t}u_{0}(v)-\Delta u_{0}(v)+\mathcal{A}_{n}(u_{0}(v)-\mathcal{B}%
_{n}H(u_{0}(v)))\chi _{\omega ^{T}} &  \\ 
+\mathcal{A}_{n}(u_{0}(v)-\mathcal{B}_{n}M(u_{0}(v)))\chi _{(\Omega
\setminus \overline{\omega })\times (0,T)}=f+\mathcal{A}_{n}\mathcal{B}%
_{n}v\chi _{\omega ^{T}}, & (x,t)\in Q^{T}, \\ 
u_{0}(v)(x,0)=0, & x\in \Omega , \\ 
u_{0}(v)(x,t)=0, & (x,t)\in \Gamma ^{T},%
\end{array}%
\right.  \label{limit state problem func}
\end{equation}%
where $v\in H^{1}(0,T;L^{2}(\omega ))$, with $v(x,0)=0,$ and if we define
the cost functional $J_{0}(v)$ given by (\ref{limit functional}),\ then,
from \eqref{limit state problem func} and \eqref{limit func def}, we have
that $u_{0}(v)$ is the state associated to the optimal control problem 
\begin{equation}
J_{0}(v_{0})=\min \limits_{v\in U_{ad}}J_{0}(v),  \label{limit opt cont prob}
\end{equation}%
where the set of admissible functions is given by 
\begin{equation*}
U_{ad}=\{ \psi \in H^{1}(0,T;L^{2}(\omega ))|\text{ }\psi (x,0)=0\}.
\end{equation*}
\end{theorem}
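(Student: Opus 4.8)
The plan is to pass to the limit in the two weak formulations \eqref{int ident u} and \eqref{int ident init adjoint prob}, after inserting the optimal feedback $v_\varepsilon=-N^{-1}p_\varepsilon\chi_{S^{2,T}_\varepsilon}$ from Theorem~\ref{thm: init opt cont}, and then to recognize the resulting limit as the optimality system of \eqref{limit opt cont prob}. First I would fix the corrector adapted to the critical scaling $a_\varepsilon=C_0\varepsilon^{n/(n-2)}$: for each $j\in\Upsilon_\varepsilon$ let $w_\varepsilon$ be the radial harmonic capacity potential in the annulus $B(P^j_\varepsilon,\varepsilon/4)\setminus\overline{G^j_\varepsilon}$, equal to $1$ on $S^j_\varepsilon=\partial G^j_\varepsilon$ and to $0$ on $\partial B(P^j_\varepsilon,\varepsilon/4)$, extended by $0$ elsewhere. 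One checks that $w_\varepsilon\to0$ strongly in $L^2(\Omega)$ while $\nabla w_\varepsilon$ concentrates on the particles and carries their capacity; summing over the $\cong d\varepsilon^{-n}$ particles, the density factor $\mathcal{A}_n$ emerges, consistent with the fact that $\mathrm{cap}(a_\varepsilon G_0)$ and $\varepsilon^{-\gamma}|S^j_\varepsilon|$ are both of order $\varepsilon^{n}$, with ratio exactly $\mathcal{B}_n=(n-2)C_0^{-1}$. Testing \eqref{int ident u} and \eqref{int ident init adjoint prob} against smooth $\phi$ and against the oscillating functions $\phi(1-w_\varepsilon)$ separates the bulk evolution $\partial_t u_0-\Delta u_0$ from the $\varepsilon^{-\gamma}$-weighted surface contributions, which are precisely the strange terms.

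The decisive point, where both the dynamic boundary condition and the localized control enter, is the evaluation of those surface integrals. I would integrate the dynamic conditions in \eqref{init coupled prob} over each sphere $S^j_\varepsilon$ to obtain ODEs for the spherical averages $\xi^j_\varepsilon(t)$ of $u_\varepsilon$ and $\pi^j_\varepsilon(t)$ of $p_\varepsilon$; after dividing by $\varepsilon^{-\gamma}|S^j_\varepsilon|$ and using that the limiting flux is $\mathcal{B}_n$ times the jump between the bulk value and the trace, these collapse in the limit to $\partial_t\xi+\mathcal{B}_n(\xi-u_0)=(\text{control})$ with $\xi(0)=0$. On the uncontrolled particles, i.e.\ over $(\Omega\setminus\overline{\omega})$, there is no control and the trace converges to $\mathcal{B}_n M(u_0)$, producing the strange term $\mathcal{A}_n(u_0-\mathcal{B}_n M(u_0))$; on the controlled particles the feedback $-N^{-1}p_\varepsilon$ couples $\xi$ to the backward trace of $p_\varepsilon$, so that the effective coefficient becomes $\mathcal{B}_n+N^{-1}$ and the forward–backward pair is resolved through $G,G^{\ast}$ and then $H,H^{\ast}$. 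At this stage I would invoke Lemma~\ref{lem:aux relations}, in particular $G^{\ast}(H(\varphi))=H^{\ast}(G(\varphi))$, to rewrite the coupled traces in the compact form $u_0-\mathcal{B}_n H(u_0)$ and $p_0-\mathcal{B}_n H^{\ast}(p_0)$, and to match the right-hand sides $-N^{-1}\mathcal{A}_n\mathcal{B}_n H(G^{\ast}(p_0))$ and $\mathcal{A}_n(u_0-\mathcal{B}_n(\mathcal{B}_n+N^{-1})G^{\ast}(H(u_0)))$ appearing in \eqref{homogenized system}. The weak convergences \eqref{limit func def} together with the strong $L^2(Q^T)$ convergence of $\tilde u_\varepsilon,\tilde p_\varepsilon$ justify the passage to the limit in all the volume products.

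Having derived \eqref{homogenized system}, the remaining step is the homogenization–optimization interchange. I would check that \eqref{homogenized system} is exactly the first–order optimality system of the reduced functional $J_0$ over $U_{ad}$: expressing $u_0(v)$ through \eqref{limit state problem func}, computing the Gâteaux derivative of $J_0$ in \eqref{limit functional} along admissible directions produces an adjoint equation which, under the feedback relating $v_0$ to $p_0$ through $G^{\ast}$ and $H$, reproduces the state and adjoint equations of \eqref{homogenized system}; the otherwise surprising final–time and $\partial_t v$ terms of $J_0$ are precisely what this adjoint computation returns. Since $J_0$ is strictly convex and coercive on $U_{ad}=\{\psi\in H^1(0,T;L^2(\omega)):\psi(x,0)=0\}$ — the terms $N\mathcal{A}_n\mathcal{B}_n\int_{\omega^T}(\partial_t v)^2$ and $N\mathcal{A}_n\mathcal{B}_n^2(\mathcal{B}_n+N^{-1})\int_{\omega^T}v^2$ control the full $H^1(0,T;L^2(\omega))$ norm — this optimality system has a unique solution, so the limit pair $(u_0,p_0)$ must be the optimal state–adjoint pair and $u_0=u_0(v_0)$.

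I expect the main obstacle to be the rigorous limit of the $\varepsilon^{-\gamma}$-weighted surface integrals over the controlled particles, where the two dynamic conditions are coupled by the feedback. One must show that the microscopic forward–backward system for $(\xi^j_\varepsilon,\pi^j_\varepsilon)$ converges, uniformly enough, to the non-local operators $H$ and $H^{\ast}$, and that the cross terms — notably $\varepsilon^{-\gamma}\int_{S^{2,T}_\varepsilon}p_\varepsilon\varphi\,ds\,dt$ — pass to the limit without spurious contributions. Controlling this coupling through the corrector estimates for $w_\varepsilon$ and the trace ODEs is the technical heart of the argument; by contrast, the interior equations and the purely volumetric terms converge directly from \eqref{limit func def}.
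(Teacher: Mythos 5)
Your proposal shares the paper's outer framework: the capacity correctors $w_{\varepsilon}^{j}$, $W_{i,\varepsilon}$, the splitting into controlled and uncontrolled particles, and the final identification of \eqref{homogenized system} as the optimality system of $J_{0}$ via the G\^{a}teaux derivative, strict convexity and uniqueness (this last part is essentially what the paper does in Section 7). Where you genuinely diverge is the mechanism for computing the $\varepsilon ^{-\gamma }$-weighted surface limits: you propose microscopic forward--backward ODEs for the spherical averages $(\xi _{\varepsilon }^{j},\pi _{\varepsilon }^{j})$, to be solved and matched with $H,H^{\ast },G,G^{\ast }$. It is worth recording that this heuristic does predict the correct answer: the limit system
\begin{gather*}
\partial _{t}\xi +\mathcal{B}_{n}(\xi -u_{0})=-N^{-1}\pi ,\qquad \xi (0)=0, \\
-\partial _{t}\pi +\mathcal{B}_{n}(\pi -p_{0})=\mathcal{B}_{n}(\xi -u_{0}),\qquad \pi (T)=\xi (T),
\end{gather*}
is solved by $\xi =\mathcal{B}_{n}H(u_{0})-N^{-1}\mathcal{B}_{n}H(G^{\ast }(p_{0}))$ and $\pi =\mathcal{B}_{n}G^{\ast }(p_{0})+\xi -(\mathcal{B}_{n}+N^{-1})G^{\ast }(\xi )$, which are exactly the trace limits the paper extracts (in weak form against test functions) in Steps A.1 and A.2 of its proof.

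However, there is a genuine gap, and it sits exactly where you place ``the technical heart''. Your derivation of the limit ODEs rests on the assertion that, particle by particle, the flux $\int_{S_{\varepsilon }^{j}}\partial _{\nu }u_{\varepsilon }\,ds$ behaves like $\mathrm{cap}(a_{\varepsilon }G_{0})$ times the jump between the bulk value and the boundary trace. The a priori estimates of Section 3 give only the weak convergences \eqref{limit func def} and uniform bounds on $\varepsilon ^{-\gamma }$-weighted surface norms; they provide no particle-wise flux asymptotics, and such asymptotics are not available pointwise in $j$ --- the capacity--jump relation is valid only in the averaged, weak form \eqref{w limit relation}. Moreover, the coupling doubles the difficulty: the adjoint dynamic condition in \eqref{init coupled prob} has $\partial _{\nu }u_{\varepsilon }$ as its right-hand side, so the unproven flux approximation is needed twice, once for $u_{\varepsilon }$ and once inside the equation for $\pi _{\varepsilon }^{j}$. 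The paper's proof is engineered precisely to avoid this: it never estimates a single particle's flux, but instead takes $W_{2,\varepsilon }H^{\ast }(\varphi )$ as test function in \eqref{int ident u} and $W_{2,\varepsilon }G(\varphi )$ in \eqref{int ident init adjoint prob}, integrates by parts in time (legitimate because $u_{\varepsilon }(\cdot ,0)=0$, $H^{\ast }(\varphi )(T)=0$, $G(\varphi )(0)=0$ and $p_{\varepsilon }(\cdot ,T)=u_{\varepsilon }(\cdot ,T)$), and substitutes the state identity into the adjoint one to obtain \eqref{p limit with u}; the defining equations of $G$ and $H^{\ast }$ then make the unknown surface terms recombine algebraically into $\varepsilon ^{-\gamma }\int_{S_{\varepsilon }^{2,T}}u_{\varepsilon }\varphi \,dsdt$ and $\varepsilon ^{-\gamma }\int_{S_{\varepsilon }^{2,T}}p_{\varepsilon }\varphi \,dsdt$, whose limits are read off from \eqref{w limit relation}. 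To make your route rigorous you would either have to reproduce this weak-formulation argument (at which point the spherical averages become superfluous), or establish a quantitative corrector expansion $u_{\varepsilon }\approx u_{0}+(\xi _{\varepsilon }^{j}-u_{0})w_{\varepsilon }^{j}$ near each particle with errors summable over the $O(\varepsilon ^{-n})$ particles --- a substantial piece of analysis that the proposal does not supply.
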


In Section 7 we will prove the convergence of the sequence of functionals $%
J_{\varepsilon }$ to the limit functional $J_{0}$ given by (\ref{limit
functional}). This will show that the system \eqref{homogenized
system} characterizes the optimal control problem \eqref{limit opt cont prob}%
, i.e. that $v_{0}=-N^{-1}H(G^{\ast }(p_{0}))\chi _{\omega ^{T}}$. Then we
will conclude

\begin{theorem}
\label{thm: cost func limit} Under the conditions of Theorem~\ref%
{homogenization theorem}, we have 
\begin{equation*}
\lim \limits_{\varepsilon \rightarrow 0}J_{\varepsilon }(v_{\varepsilon
})=J_{0}(v_{0}),
\end{equation*}%
where $v_{\varepsilon }$ is the optimal control of the 
\eqref{init state
prob}, \eqref{cost functional}, and $v_{0}$ is the optimal control of the
limit optimal control problem \eqref{limit opt cont prob}. 
\end{theorem}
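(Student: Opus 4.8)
The guiding idea is to avoid tracking the many terms of $J_\varepsilon$ and $J_0$ separately, and instead to reduce each optimal value to a single duality pairing with the datum $f$. At the microscopic level this duality is essentially the computation of Section~3: testing the state identity \eqref{int ident u} with $\varphi=p_\varepsilon$ and the adjoint identity \eqref{int ident init adjoint prob} with $\varphi=u_\varepsilon$, subtracting, and integrating the time derivatives using $u_\varepsilon(\cdot,0)=0$ and $p_\varepsilon(\cdot,T)=u_\varepsilon(\cdot,T)$, gives the \emph{equality}
\begin{equation*}
\|\nabla u_\varepsilon\|_{L^2(Q^T_\varepsilon)}^2+\|u_\varepsilon(\cdot,T)\|_{L^2(\Omega_\varepsilon)}^2+\varepsilon^{-\gamma}\|u_\varepsilon(\cdot,T)\|_{L^2(S_\varepsilon)}^2+N^{-1}\varepsilon^{-\gamma}\|p_\varepsilon\|_{L^2(S^{2,T}_\varepsilon)}^2=\int_{Q^T_\varepsilon}f\,p_\varepsilon\,dxdt.
\end{equation*}
By Theorem~\ref{thm: init opt cont}, $v_\varepsilon=-N^{-1}p_\varepsilon$ on $S^{2,T}_\varepsilon$, so $N\varepsilon^{-\gamma}\|v_\varepsilon\|^2_{L^2(S^{2,T}_\varepsilon)}=N^{-1}\varepsilon^{-\gamma}\|p_\varepsilon\|^2_{L^2(S^{2,T}_\varepsilon)}$ and the left-hand side is exactly $2J_\varepsilon(v_\varepsilon)$; hence $J_\varepsilon(v_\varepsilon)=\tfrac12\int_{Q^T_\varepsilon}f\,p_\varepsilon\,dxdt$. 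Passing to the limit is then immediate: splitting $\int_{Q^T_\varepsilon}f p_\varepsilon=\int_{Q^T}f\tilde p_\varepsilon-\int_{G_\varepsilon\times(0,T)}f\tilde p_\varepsilon$, the correction is controlled by $\|f\|_{L^2(G_\varepsilon\times(0,T))}\|\tilde p_\varepsilon\|_{L^2(Q^T)}\to0$ since $|G_\varepsilon|\to0$, while the first integral converges by the strong convergence $\tilde p_\varepsilon\to p_0$ in $L^2(Q^T)$ recorded after \eqref{limit func def}. Thus $\lim_{\varepsilon\to0}J_\varepsilon(v_\varepsilon)=\tfrac12\int_{Q^T}f\,p_0$.

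The substance of the proof is to establish the \emph{same} representation at the limit level, $J_0(v_0)=\tfrac12\int_{Q^T}f\,p_0$. I would first verify admissibility and the form of the limit control: the uniform bound on $\partial_t p_\varepsilon$ from Section~3 survives in the limit, and the characterization of the limit couple identifies $v_0=-N^{-1}H(G^*(p_0))\chi_{\omega^T}$, which lies in $H^1(0,T;L^2(\omega))$ and satisfies $v_0(\cdot,0)=0$ automatically because $H(\cdot)(0)=0$ in \eqref{H prob 2}; consequently $u_0=u_0(v_0)$ solves \eqref{limit state problem func}. I would then repeat the microscopic manipulation macroscopically: test \eqref{limit state problem func} (with $v=v_0$) against $p_0$, test the $p_0$-equation of \eqref{homogenized system} against $u_0$, subtract, and integrate by parts in time using $u_0(\cdot,0)=0$, $p_0(\cdot,T)=u_0(\cdot,T)$, and in space using the homogeneous lateral conditions. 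Each non-local term is transferred onto its partner through the adjointness identities \eqref{G adj rel}, \eqref{H adj rel} and the analogous relation for $M,M^*$, while the coupled operators $G^*(H(u_0))$ and $M^*(M(u_0))$ on the right of the adjoint equation are untangled through Lemma~\ref{lem:aux relations}, in particular $G^*(H(\varphi))=H^*(G(\varphi))$ and the decomposition of $H$ in part (ii).

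The main obstacle is exactly this last algebraic reconciliation. In the microscopic identity the boundary control folded cleanly into the single term $N^{-1}\varepsilon^{-\gamma}\|p_\varepsilon\|^2$; at the limit the control enters the state equation through $\mathcal A_n\mathcal B_n v_0\chi_{\omega^T}$ and, after inserting $v_0=-N^{-1}H(G^*(p_0))$ and applying the operator relations, it unfolds into several distinct non-local pieces. One must check that these, together with the bulk contributions $\|\nabla u_0\|^2$ and $\|u_0(\cdot,T)\|^2$, simultaneously reassemble the full expression \eqref{limit functional} for $J_0(v_0)$ and collapse to the target value $\tfrac12\int_{Q^T}f p_0$ — equivalently, that \eqref{homogenized system} is genuinely the optimality system of \eqref{limit opt cont prob}, so that $v_0=-N^{-1}H(G^*(p_0))\chi_{\omega^T}$ is its optimal control. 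This term-by-term matching of the final-time and time-derivative strange terms, mediated entirely by the non-local operators, is the delicate part; once it is closed, combining it with the microscopic limit yields $\lim_{\varepsilon\to0}J_\varepsilon(v_\varepsilon)=\tfrac12\int_{Q^T}f p_0=J_0(v_0)$, which is the assertion.
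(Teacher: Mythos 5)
Your proposal follows essentially the same route as the paper's proof: the microscopic duality identity $2J_{\varepsilon}(v_{\varepsilon})=\int_{Q_{\varepsilon}^{T}}fp_{\varepsilon}\,{dxdt}$ (obtained by testing \eqref{int ident u} with $p_{\varepsilon}$ and \eqref{int ident init adjoint prob} with $u_{\varepsilon}$ and using $u_{\varepsilon}(\cdot,0)=0$, $p_{\varepsilon}(\cdot,T)=u_{\varepsilon}(\cdot,T)$), the passage to the limit $\int_{Q^{T}}fp_{0}\,{dxdt}$, and then the macroscopic identification of this quantity with $2J_{0}(v_{0})$ via the integral identities \eqref{u_0 int ident}, \eqref{p_0 int ident}, the adjointness relations and Lemma~\ref{lem:aux relations}. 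The "delicate part" you defer is precisely what the paper carries out: it rewrites $\int(u_{0}-\mathcal{B}_{n}(\mathcal{B}_{n}+N^{-1})G^{\ast}(H(u_{0})))u_{0}$, $\int(u_{0}-\mathcal{B}_{n}^{2}M^{\ast}(M(u_{0})))u_{0}$ and $\int H(G^{\ast}(p_{0}))p_{0}$ as sums of squares by exploiting the second-order ODE \eqref{second order ODE} with boundary conditions \eqref{bound cond ODE} satisfied by $G^{\ast}(H(\cdot))$ and $H(G^{\ast}(\cdot))$, recovering term by term the expression \eqref{limit functional} evaluated at $v_{0}=-N^{-1}H(G^{\ast}(p_{0}))\chi_{\omega^{T}}$.
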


\bigskip Lastly, we will show that the system \eqref{homogenized system} is
related to the limit functional and the limit optimal control $v_{0}$.

\begin{theorem}
\label{Thm convergence controls}Let the pair of functions $%
(u_{0}(v_{0}),v_{0})$ be an optimal solution of the problem 
\eqref{limit opt
cont prob}, then $v_{0}=-N^{-1}H(G^{\ast }(p_{0}))\chi _{\omega ^{T}}$,
where $p_{0}$ is the solution to \eqref{p_0 limit prob}.
\end{theorem}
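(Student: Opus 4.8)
The plan is to mirror the variational argument used to prove Theorem~\ref{thm: init opt cont}, now carried out directly on the limit problem \eqref{limit opt cont prob}. First I would fix an arbitrary direction $v\in U_{ad}$ and a scalar $\lambda>0$, and exploit the linearity of the state equation \eqref{limit state problem func} in the control: the Gateaux derivative $\theta_0:=\lim_{\lambda\to 0}(u_0(v_0+\lambda v)-u_0(v_0))/\lambda$ solves the same equation with $f\equiv 0$ and source $\mathcal{A}_n\mathcal{B}_n v\chi_{\omega^T}$, together with $\theta_0(x,0)=0$ and $\theta_0=0$ on $\Gamma^T$. Differentiating $J_0$ from \eqref{limit functional} term by term then exhibits $J_0'(v_0)\cdot v$ as a sum of pairings that are linear in $\theta_0$ (coming from the seven terms built on $u_0(v)$) and linear in $v$ (coming from the last three terms, which involve $\partial_t v$, $v(\cdot,T)$ and $v$). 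Strict convexity of $J_0$ in $v$, forced by these last terms, guarantees that the resulting first-order condition $J_0'(v_0)\cdot v=0$ is both necessary and sufficient and that $v_0$ is unique.

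Next I would process the two groups separately. For the $\theta_0$-group the idea is the usual adjoint device: test the linearized equation for $\theta_0$ against the limit adjoint state $p_0$ of \eqref{p_0 limit prob} (equivalently, test the adjoint equation against $\theta_0$), so that the Laplacian and the zeroth-order non-local contributions collapse into a single boundary-type pairing on $\omega^T$. Moving the operators $G^{\ast}$, $H$ and $M$ off $\theta_0$ and onto $p_0$ (or onto $v$) is done with the adjointness identities \eqref{G adj rel}, \eqref{H adj rel}, the analogous relation for $M$, and the representation and commutation formulas of Lemma~\ref{lem:aux relations}; integrations by parts in time then produce the $t=T$ contributions that match the final-time terms of $J_0$. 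The outcome should be an expression of the form $\mathcal{A}_n\mathcal{B}_n\int_{\omega^T}\Phi(p_0)\,v\,dx\,dt$ for a suitable non-local-in-time functional $\Phi$ of $p_0$.

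For the $v$-group I would integrate the term $N\mathcal{A}_n\mathcal{B}_n\int_{\omega^T}\partial_t v_0\,\partial_t v\,dx\,dt$ by parts in $t$, using $v(x,0)=0$ to kill the lower endpoint; collecting the interior contributions produces exactly the operator $-\partial_{tt}+\mathcal{B}_n(\mathcal{B}_n+N^{-1})$ acting on $v_0$, while the $t=T$ contributions assemble into a Robin-type endpoint condition. This is precisely the second-order operator \eqref{second order ODE} and the boundary conditions \eqref{bound cond ODE} analyzed in Lemma~\ref{lem:aux relations}. Setting $J_0'(v_0)\cdot v=0$ for every $v\in U_{ad}$ and localizing the interior and endpoint terms therefore yields a two-point boundary value problem in time for $v_0$ whose right-hand side is governed by $p_0$ through $\Phi$.

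Finally I would solve this boundary value problem. By construction the operators $H$ and $G^{\ast}$ in \eqref{adj H prob 2}--\eqref{H prob 2} are exactly the solution operators for the coupled first-order systems \eqref{lem:func sys 1}--\eqref{lem:func sys 2} whose elimination produces \eqref{second order ODE}--\eqref{bound cond ODE}; invoking the uniqueness established there lets me identify the solution of the Euler--Lagrange problem as $v_0=-N^{-1}H(G^{\ast}(p_0))\chi_{\omega^T}$. The main obstacle is the bookkeeping in the two middle steps: because the cost penalizes $\partial_t v$, the optimality condition is not algebraic but a non-local two-point boundary value problem in time, and one must verify that the interior operator, \emph{both} endpoint conditions, and the $p_0$-dependent source generated by the $\theta_0$-pairing fit together precisely into the defining problems for $H\circ G^{\ast}$. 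Matching these three ingredients is exactly where the relations of Lemma~\ref{lem:aux relations} and the adjointness formulas \eqref{G adj rel}, \eqref{H adj rel} must be applied with care.
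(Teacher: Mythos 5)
Your proposal is correct and follows essentially the same route as the paper's own proof: the linearized state $\theta$, cancellation of all $\theta$-pairings by testing the limit adjoint problem \eqref{p_0 limit prob} against $\theta$ together with the symmetric bilinear identities (so that your unspecified $\Phi(p_0)$ turns out to be just $p_0$ itself), then integration by parts in time of the $v$-terms to obtain the Euler--Lagrange two-point problem $-\partial_{tt}^2v_0+\mathcal{B}_n(\mathcal{B}_n+N^{-1})v_0=-N^{-1}p_0$, whose unique solution is identified as $-N^{-1}H(G^{\ast}(p_0))\chi_{\omega^T}$. One detail to adjust: the endpoint conditions produced by the variation are $v_0(x,0)=0$ and $\partial_t v_0(x,T)+(\mathcal{B}_n+N^{-1})v_0(x,T)=0$, which is the time-reflection of \eqref{bound cond ODE} (that problem has the Robin condition at $t=0$, the Dirichlet condition at $t=T$, and is solved by $G^{\ast}(H(\cdot))$, not $H(G^{\ast}(\cdot))$), and it is precisely this orientation that forces the answer to be $H\circ G^{\ast}$ rather than $G^{\ast}\circ H$.
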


\section{Proof of the homogenization theorem}

\begin{proof}
of Theorem \ref{homogenization theorem}. The main idea is to adapt to our
setting the great lines of the so called \textit{alternating test functions }%
(initially due to Luc Tartar to some simple framework and then extended by
many different authors: see, e.g., the monograph \cite{DiGoShBook}). We
introduce auxiliary functions $w_{\varepsilon }^{j}$, $j\in \mathbb{Z}^{n}$,
that are solutions to the boundary-value problems 
\begin{equation}
\left \{ 
\begin{array}{lr}
\Delta {w}_{\varepsilon }^{j}=0, & x\in T_{{\varepsilon }/4}^{j}\setminus 
\overline{G_{\varepsilon }^{j}}, \\ 
w_{\varepsilon }^{j}=1, & x\in \partial {G_{\varepsilon }^{j}}, \\ 
w_{\varepsilon }^{j}=0, & x\in \partial {T_{{\varepsilon }/4}^{j}},%
\end{array}%
\right.  \label{def: auxiliary func w}
\end{equation}%
where $T_{{\varepsilon }/4}^{j}$ denotes the ball centered in $%
P_{\varepsilon }^{j}$ of ${{\varepsilon }/4}$ radii. Based on these
functions, we construct auxiliary functions in the whole domain $\Omega $
(where $i=1,2$) 
\begin{equation}
W_{i,\varepsilon }=\left \{ 
\begin{array}{lr}
w_{\varepsilon }^{j}(x), & x\in T_{\varepsilon /4}^{j}\setminus \overline{%
G_{\varepsilon }^{j}},\,j\in \Upsilon _{\varepsilon }^{i}, \\ 
1, & x\in \overline{G_{\varepsilon }^{j}},\,j\in \Upsilon _{\varepsilon
}^{i}, \\ 
0, & x\in \Omega \setminus \overline{\bigcup \limits_{j\in \Upsilon
_{\varepsilon }^{i}}{T_{{\varepsilon }/4}^{j}}}.%
\end{array}%
\right.  \label{def: W aux func}
\end{equation}%
They are related to controllable and uncontrollable sets of particles. Note
that $W_{i,\varepsilon }\in H_{0}^{1}(\Omega )$ and $W_{i,\varepsilon
}\rightharpoonup 0$ weakly in $H_{0}^{1}(\Omega )$ as $\varepsilon
\rightarrow 0$. Due to the embedding theorems, for some subsequence for
which we preserve the notation of the original, we have $W_{i,\varepsilon
}\rightarrow 0$ strongly in $L^{2}(\Omega )$ as $\varepsilon \rightarrow 0$.

We will structure this long proof in a series of different steps.

\textit{Step A.1.} We take $W_{2,\varepsilon }H^{\ast }(\varphi )$, where $%
\varphi =\psi (x)\eta (t)$ with $\psi (x)\in C_{0}^{\infty }(\Omega )$, $%
\eta (t)\in C^{1}([0,T])$, as a test function in the integral identity %
\eqref{int ident u} and get 
\begin{gather}
\int \limits_{Q_{\varepsilon }^{T}}\partial _{t}u_{\varepsilon
}W_{2,\varepsilon }H^{\ast }(\varphi ){dxdt}+\varepsilon ^{-\gamma }\int
\limits_{S_{\varepsilon }^{2,T}}\partial _{t}u_{\varepsilon }H^{\ast
}(\varphi ){dsdt}+\int \limits_{Q_{\varepsilon }^{T}}\nabla u_{\varepsilon
}\nabla (W_{2,\varepsilon }H^{\ast }(\varphi )){dxdt}  \notag \\
=\int \limits_{Q_{\varepsilon }^{T}}fW_{2,\varepsilon }H^{\ast }(\varphi ){%
dxdt}-N^{-1}\int \limits_{S_{\varepsilon }^{2,T}}p_{\varepsilon }H^{\ast
}(\varphi ){dsdt}.  \label{thm:W2 u first}
\end{gather}%
Using the convergences \eqref{limit func def} and the properties of $%
W_{2,\varepsilon }$, we have 
\begin{gather*}
\lim \limits_{\varepsilon \rightarrow 0}\int \limits_{Q_{\varepsilon
}^{T}}\partial _{t}u_{\varepsilon }W_{2,\varepsilon }H^{\ast }(\varphi ){dxdt%
}=0,\quad \lim \limits_{\varepsilon \rightarrow 0}\int
\limits_{Q_{\varepsilon }^{T}}fW_{2,\varepsilon }H^{\ast }(\varphi ){dxdt}=0,
\\
\lim \limits_{\varepsilon \rightarrow 0}\int \limits_{Q_{\varepsilon
}^{T}}\nabla u_{\varepsilon }\nabla (W_{2,\varepsilon }H^{\ast }(\varphi )){%
dxdt}=\lim \limits_{\varepsilon \rightarrow 0}\int \limits_{Q_{\varepsilon
}^{T}}\nabla (u_{\varepsilon }H^{\ast }(\varphi ))\nabla W_{2,\varepsilon }{%
dxdt}.
\end{gather*}%
Thus, from \eqref{thm:W2 u first}, we derive 
\begin{equation*}
\varepsilon ^{-\gamma }\int \limits_{S_{\varepsilon }^{2,T}}\partial
_{t}u_{\varepsilon }H^{\ast }(\varphi ){dsdt}=-\int \limits_{Q_{\varepsilon
}^{T}}\nabla W_{2,\varepsilon }\nabla (u_{\varepsilon }H^{\ast }(\varphi )){%
dxdt}-N^{-1}\int \limits_{S_{\varepsilon }^{2,T}}p_{\varepsilon }H^{\ast
}(\varphi ){dsdt}+\zeta _{\varepsilon },
\end{equation*}%
where, here and below, $\zeta _{\varepsilon }\rightarrow 0$ as $\varepsilon
\rightarrow 0$ (we will abuse the notation and will always use $\zeta
_{\varepsilon }$ for the terms converging to zero).

Next, we use the following fundamental relation (calling in Theorem 4.5 of 
\cite{DiGoShBook} as \textquotedblleft from surface to volume averaging
convergence principle\textquotedblright \ and also applied, under different
formulations, by many different authors: see, e.g., \cite{OlSh95} and \cite%
{ZuShDiffEq}, among many others). We have 
\begin{equation}
\int \limits_{\Omega _{\varepsilon }}\nabla W_{i,\varepsilon }\nabla \eta {dx%
}=-\mathcal{A}_{n}\int \limits_{\Omega ^{i}}\eta {dx}+\mathcal{B}%
_{n}\varepsilon ^{-\gamma }\sum \limits_{j\in \Upsilon _{\varepsilon
}^{i}}\int \limits_{\partial G_{\varepsilon }^{j}}\eta {ds}+\zeta
_{\varepsilon },  \label{w limit relation}
\end{equation}%
here $\Omega ^{1}=\Omega \setminus \omega $, $\Omega ^{2}=\omega $, and then 
\begin{gather*}
\lim \limits_{\varepsilon \rightarrow 0}\varepsilon ^{-\gamma }\int
\limits_{S_{\varepsilon }^{2,T}}\partial _{t}u_{\varepsilon }H^{\ast
}(\varphi ){dsdt}=\mathcal{A}_{n}\int \limits_{\omega ^{T}}H(u_{0})\varphi {%
dxdt}-\lim \limits_{\varepsilon \rightarrow 0}\varepsilon ^{-\gamma }%
\mathcal{B}_{n}\int \limits_{S_{\varepsilon }^{2,T}}u_{\varepsilon }H^{\ast
}(\varphi ){dsdt} \\
-N^{-1}\lim \limits_{\varepsilon \rightarrow 0}\varepsilon ^{-\gamma }\int
\limits_{S_{\varepsilon }^{2,T}}p_{\varepsilon }H^{\ast }(\varphi ){dsdt}.
\end{gather*}%
Using that $u_{\varepsilon }(x,0)=0$ and $H^{\ast }(\varphi )(x,T)=0$, we
integrate by parts the integral in the right-hand side, and further
transform the previous equality to get 
\begin{gather}
\lim \limits_{\varepsilon \rightarrow 0}\varepsilon ^{-\gamma }\int
\limits_{S_{\varepsilon }^{2,T}}u_{\varepsilon }(-\partial _{t}H^{\ast
}(\varphi )+\mathcal{B}_{n}H^{\ast }(\varphi )){dsdt}  \notag \\
=\mathcal{A}_{n}\int \limits_{\omega ^{T}}H(u_{0})\varphi {dxdt}-\lim
\limits_{\varepsilon \rightarrow 0}N^{-1}\varepsilon ^{-\gamma }\int
\limits_{S_{\varepsilon }^{2,T}}p_{\varepsilon }H^{\ast }(\varphi ){dsdt}.
\label{thm: u limit rel 1}
\end{gather}

\textit{Step A.2. }We take $\varphi =W_{2,\varepsilon }G(\varphi )$, where $%
\varphi =\psi (x)\eta (t)$ with $\psi \in C_{0}^{\infty }(\Omega )$, $\eta
\in C^{1}([0,T])$ as a test function in the integral identity 
\eqref{int ident init adjoint
prob}, and get 
\begin{gather}
-\int \limits_{Q_{\varepsilon }^{T}}\partial _{t}p_{\varepsilon
}W_{2,\varepsilon }G(\varphi ){dxdt}-\int \limits_{S_{\varepsilon
}^{2,T}}\partial _{t}p_{\varepsilon }G(\varphi ){dsdt}+\int
\limits_{Q_{\varepsilon }^{T}}\nabla p_{\varepsilon }\nabla
(W_{2,\varepsilon } G(\varphi )){dxdt}  \notag \\
=\int \limits_{Q_{\varepsilon }^{T}}\nabla u_{\varepsilon }\nabla
(W_{2,\varepsilon }G(\varphi )){dxdt}.  \label{thm: W_2 p first}
\end{gather}%
Taking $W_{2,\varepsilon }G(\varphi )$ as a test function in 
\eqref{int
ident u}, we have (using the same arguments as a above) 
\begin{equation*}
\int \limits_{Q_{\varepsilon }^{T}}\nabla u_{\varepsilon }\nabla
(W_{2,\varepsilon }G(\varphi )){dxdt}=-\int \limits_{S_{\varepsilon
}^{2,T}}\partial _{t}u_{\varepsilon }G(\varphi ){dsdt}-N^{-1}\varepsilon
^{-\gamma }\int \limits_{S_{\varepsilon }^{2,T}}p_{\varepsilon }G(\varphi ){%
dsdt}+\zeta _{\varepsilon }.
\end{equation*}%
Substituting this relation into \eqref{thm: W_2 p first}, we derive 
\begin{gather}
\int \limits_{Q_{\varepsilon }^{T}}\nabla W_{2,\varepsilon }\nabla
(p_{\varepsilon }G(\varphi )){dxdt}-\varepsilon ^{-\gamma }\int
\limits_{S_{\varepsilon }^{2,T}}\partial _{t}(p_{\varepsilon
}-u_{\varepsilon })G(\varphi ){dsdt}  \notag \\
=\int \limits_{Q_{\varepsilon }^{T}}\partial _{t}p_{\varepsilon
}W_{2,\varepsilon }G(\varphi ){dxdt}-N^{-1}\varepsilon ^{-\gamma }\int
\limits_{S_{\varepsilon }^{2,T}}p_{\varepsilon }G(\varphi ){dsdt}+\zeta
_{\varepsilon }.  \label{thm: p ident G}
\end{gather}%
Using the properties of $W_{2,\varepsilon }$ and a priori estimates of $%
u_{\varepsilon }$ and $p_{\varepsilon }$, we conclude that the first
integral in the right-hand side of the above equality converge to zero as $%
\varepsilon \rightarrow 0$. Also, using the relation \eqref{w limit relation}%
, we derive 
\begin{gather}
-\varepsilon ^{-\gamma }\int \limits_{S_{\varepsilon }^{2,T}}\partial
_{t}(p_{\varepsilon }-u_{\varepsilon })G(\varphi ){dsdt}+\varepsilon
^{-\gamma }\mathcal{B}_{n}\int \limits_{S_{\varepsilon
}^{2,T}}p_{\varepsilon }G(\varphi ){dsdt}=  \notag \\
=\mathcal{A}_{n}\int \limits_{\omega ^{T}}p_{0}G(\varphi ){dxdt}%
-N^{-1}\varepsilon ^{-\gamma }\int \limits_{S_{\varepsilon
}^{2,T}}p_{\varepsilon }G(\varphi ){dsdt}+\zeta _{\varepsilon }.
\label{thm: p ident G 2}
\end{gather}%
Note, that $p_{\varepsilon }(x,T)=u_{\varepsilon }(x,T)$ and $G(\varphi
)(0)=0$, therefore, we have 
\begin{equation*}
-\varepsilon ^{-\gamma }\int \limits_{S_{\varepsilon }^{2,T}}\partial
_{t}(p_{\varepsilon }-u_{\varepsilon })G(\varphi ){dsdt}=\varepsilon
^{-\gamma }\int \limits_{S_{\varepsilon }^{2,T}}\partial _{t}G(\varphi
)(p_{\varepsilon }-u_{\varepsilon }){dsdt}.
\end{equation*}%
Thus, from \eqref{thm: p ident G 2}, we get 
\begin{equation*}
\varepsilon ^{-\gamma }\int \limits_{S_{\varepsilon }^{2,T}}p_{\varepsilon
}(\partial _{t}G(\varphi )+(\mathcal{B}_{n}+N^{-1})G(\varphi )){dsdt} \\
=\mathcal{A}_{n}\int \limits_{\omega ^{T}}p_{0}G(\varphi ){dxdt}+\varepsilon
^{-\gamma }\int \limits_{S_{\varepsilon }^{2,T}}u_{\varepsilon }\partial
_{t}G(\varphi ){dsdt}+\zeta _{\varepsilon }.
\end{equation*}%
Using the definition of $G(\varphi )$, we conclude 
\begin{equation}  \label{p limit with u}
\varepsilon ^{-\gamma }\int \limits_{S_{\varepsilon }^{2,T}}p_{\varepsilon
}\varphi {dsdt}=\mathcal{A}_{n}\int \limits_{\omega ^{T}}p_{0}G(\varphi ){%
dxdt}+\varepsilon ^{-\gamma }\int \limits_{S_{\varepsilon
}^{2,T}}u_{\varepsilon }(\varphi -(\mathcal{B}_{n}+N^{-1})G(\varphi )){dsdt}%
+\zeta _{\varepsilon }.
\end{equation}%
Then, we substitute \eqref{p limit with u} (with $\varphi =H^{\ast }(\varphi
)$) into \eqref{thm: u limit rel 1}, and get 
\begin{gather*}
\lim \limits_{\varepsilon \rightarrow 0}\varepsilon ^{-\gamma }\int
\limits_{S_{\varepsilon }^{2,T}}u_{\varepsilon }(-\partial _{t}H^{\ast
}(\varphi )+(\mathcal{B}_{n}+N^{-1})H^{\ast }(\varphi )-N^{-1}(\mathcal{B}%
_{n}+N^{-1})G(H^{\ast }(\varphi ))){dsdt}= \\
=\mathcal{A}_{n}\int \limits_{\omega ^{T}}H(u_{0})\varphi {dxdt}-\mathcal{A}%
_{n}N^{-1}\int \limits_{\omega ^{T}}p_{0}G(H^{\ast }(\varphi )){dxdt}.
\end{gather*}%
Using the definition of $H^{\ast }$, we derive 
\begin{equation}  \label{thm: u
limit rel}
\lim \limits_{\varepsilon \rightarrow 0}\varepsilon ^{-\gamma }\int
\limits_{S_{\varepsilon }^{2,T}}u_{\varepsilon }\varphi {dsdt}=\mathcal{A}%
_{n}\int \limits_{\omega ^{T}}H(u_{0})\varphi {dxdt} - \mathcal{A}%
_{n}N^{-1}\int \limits_{\omega ^{T}}H(G^{\ast }(p_{0}))\varphi {dxdt}.
\end{equation}

\textit{Step A.3. }Now, we can find the limit of the integrals taken over $%
S_{\varepsilon }^{2,T}$ in the integral identity~\eqref{int ident u}.
Indeed, we take $W_{2,\varepsilon }\varphi $ as a test function in %
\eqref{int ident u}, and get 
\begin{gather}
\varepsilon ^{-\gamma }\int \limits_{S_{\varepsilon }^{2,T}}\partial
_{t}u_{\varepsilon }\varphi {dsdt}+\varepsilon ^{-\gamma }N^{-1}\int
\limits_{S_{\varepsilon }^{2,T}}p_{\varepsilon }\varphi {dsdt}=-\int
\limits_{\Omega _{\varepsilon }^{T}}\nabla u_{\varepsilon }\nabla
(W_{2,\varepsilon }\varphi ){dxdt}+\int \limits_{Q_{\varepsilon
}^{T}}fW_{2,\varepsilon }\varphi {dxdt}  \notag \\
=\mathcal{A}_{n}\int \limits_{\omega ^{T}}u_{0}\varphi {dxdt}-\varepsilon
^{-\gamma }\mathcal{B}_{n}\int \limits_{S_{\varepsilon
}^{2,T}}u_{\varepsilon }\varphi {dsdt}+\zeta _{\varepsilon }  \notag \\
=\mathcal{A}_{n}\int \limits_{\omega ^{T}}u_{0}\varphi {dxdt}-\mathcal{A}_{n}%
\mathcal{B}_{n}\int \limits_{\omega ^{T}}H(u_{0})\varphi {dxdt}+\mathcal{A}%
_{n}\mathcal{B}_{n}N^{-1}\int \limits_{\omega ^{T}}H(G^{\ast
}(p_{0}))\varphi {dxdt}+\zeta _{\varepsilon }.  \label{pa_t u conv control}
\end{gather}%
Thus, we found the limit of the terms related to $S_{\varepsilon }^{2,T}$.

\textit{Step A.4.} To complete the derivation of the limit equation, we
proceed with the terms related to $S_{\varepsilon }^{1,T}$. We take $%
W_{1,\varepsilon }M^{\ast }(\varphi )$ as a test function in the integral
identity \eqref{int ident u}, and obtain (again using the properties of the
function $W_{1,\varepsilon }$, we conclude that the integral with $f$
converges to zero) 
\begin{equation*}
\int \limits_{Q_{\varepsilon }^{T}}\nabla W_{1,\varepsilon }\nabla
(u_{\varepsilon }M^{\ast }(\varphi )){dxdt}+\varepsilon ^{-\gamma }\int
\limits_{S_{\varepsilon }^{1,T}}\partial _{t}u_{\varepsilon }M^{\ast
}(\varphi ){dsdt}=\zeta _{\varepsilon },
\end{equation*}%
where $\zeta _{\varepsilon }\rightarrow 0$ as $\varepsilon \rightarrow 0$.
Using \eqref{w limit
relation}, we derive 
\begin{equation*}
\varepsilon ^{-\gamma }\mathcal{B}_{n}\int \limits_{S_{\varepsilon
}^{1,T}}u_{\varepsilon }M^{\ast }(\varphi ){dsdt}+\varepsilon ^{-\gamma
}\int \limits_{S_{\varepsilon }^{1,T}}\partial _{t}u_{\varepsilon }M^{\ast
}(\varphi ){dsdt}=\mathcal{A}_{n}\int \limits_{(\Omega \setminus \overline{%
\omega })\times (0,T)}u_{0}M^{\ast }(\varphi ){dxdt}+\zeta _{\varepsilon }.
\end{equation*}%
As $M^{\ast }(\varphi )(x,T)=0$ and $u_{\varepsilon }(x,0)=0$, we have 
\begin{equation*}
\varepsilon ^{-\gamma }\int \limits_{S_{\varepsilon }^{1,T}}\partial
_{t}u_{\varepsilon }M^{\ast }(\varphi ){dsdt}=-\varepsilon ^{-\gamma }\int
\limits_{S_{\varepsilon }^{1,T}}u_{\varepsilon }\partial _{t}M^{\ast
}(\varphi ){dsdt}.
\end{equation*}%
Thus, we derive 
\begin{equation*}
\varepsilon ^{-\gamma }\int \limits_{S_{\varepsilon }^{1,T}}u_{\varepsilon
}(-\partial _{t}M^{\ast }(\varphi )+\mathcal{B}_{n}M^{\ast }(\varphi )){dsdt}%
=\mathcal{A}_{n}\int \limits_{(\Omega \setminus \overline{\omega })\times
(0,T)}u_{0}M^{\ast }(\varphi ){dxdt}+\zeta _{\varepsilon }.
\end{equation*}%
Using the definition of $M^{\ast }(\varphi )$, we get 
\begin{equation}  \label{u conv uncont}
\varepsilon ^{-\gamma }\int \limits_{S_{\varepsilon }^{1,T}}u_{\varepsilon
}\varphi {dsdt}=\mathcal{A}_{n}\int \limits_{(\Omega \setminus \overline{%
\omega })\times (0,T)}M(u_{0})\varphi {dxdt}+\zeta _{\varepsilon }.
\end{equation}%
Now, we take $W_{1,\varepsilon }\varphi $ as a test function in the integral
identity \eqref{int ident u}, and using similar arguments as above, we
derive 
\begin{equation*}
\int \limits_{Q_{\varepsilon }^{T}}\nabla W_{1,\varepsilon }\nabla
(u_{\varepsilon }\varphi ){dxdt}+\varepsilon ^{-\gamma }\int
\limits_{S_{\varepsilon }^{1,T}}\partial _{t}u_{\varepsilon }\varphi {dsdt}%
=\zeta _{\varepsilon }.
\end{equation*}%
We transform this identity using relation \eqref{w limit relation} and
obtain 
\begin{gather}
\varepsilon ^{-\gamma }\int \limits_{S_{\varepsilon }^{1,T}}\partial
_{t}u_{\varepsilon }\varphi {dsdt}=\mathcal{A}_{n}\int \limits_{(\Omega
\setminus \overline{\omega })\times (0,T)}u_{0}\varphi {dxdt}-\mathcal{B}%
_{n}\varepsilon ^{-\gamma }\int \limits_{S_{\varepsilon
}^{1,T}}u_{\varepsilon }\varphi {dsdt}+\zeta _{\varepsilon }  \notag \\
=\mathcal{A}_{n}\int \limits_{(\Omega \setminus \overline{\omega })\times
(0,T)}(u_{0}-\mathcal{B}_{n}M(u_{0}))\varphi {dxdt}+\zeta _{\varepsilon }.
\label{pa_t u conv uncontrol}
\end{gather}

Finally, using the convergences \eqref{pa_t u conv control} and 
\eqref{pa_t
u conv uncontrol}, we can pass to the limit in the integral identity for $%
u_{\varepsilon }$ and get the integral identity for $u_{0}$ 
\begin{gather}
\int \limits_{Q^{T}}\partial _{t}u_{0}\varphi {dxdt}+\int
\limits_{Q^{T}}\nabla u_{0}\nabla \varphi {dxdt}  \notag \\
+\mathcal{A}_{n}\int \limits_{(\Omega \setminus \overline{\omega })\times
(0,T)}(u_{0}-\mathcal{B}_{n}M(u_{0}))\varphi {dxdt}+\mathcal{A}_{n}\int
\limits_{w^{T}}(u_{0}-\mathcal{B}_{n}H(u_{0}))\varphi{dxdt}  \notag \\
=\int \limits_{Q^{T}}f\varphi {dxdt}-N^{-1}\mathcal{A}_{n}\mathcal{B}%
_{n}\int \limits_{w^{T}}H(G^{\ast }(p_{0}))\varphi {dxdt}.
\label{u_0 int ident}
\end{gather}%
Thus, $u_{0}$ is a solution to the following problem 
\begin{equation*}
\left \{ 
\begin{array}{ll}
\partial _{t}u_{0}-\Delta u_{0}+\mathcal{A}_{n}(u_{0}-\mathcal{B}%
_{n}H(u_{0}))\chi _{\omega ^{T}} &  \\ 
+\mathcal{A}_{n}(u_{0}-\mathcal{B}_{n}M(u_{0}))\chi _{(\Omega \setminus 
\overline{\omega })\times (0,T)}=f-N^{-1}\mathcal{A}_{n}\mathcal{B}%
_{n}H(G^{\ast }(p_{0}))\chi _{\omega ^{T}}, & (x,t)\in Q^{T}, \\ 
u_{0}(x,0)=0, & x\in \Omega , \\ 
u_{0}(x,t)=0, & (x,t)\in \Gamma ^{T}.%
\end{array}%
\right.
\end{equation*}

\textit{Step B.1. }Let us find the limit equation for $p_{0}$. We take $%
W_{2,\varepsilon }\varphi $ as a test function in the adjoint problem's
integral identity~\eqref{int
ident init adjoint prob}, and get 
\begin{equation*}
-\int \limits_{Q_{\varepsilon }^{T}}\partial _{t}p_{\varepsilon
}W_{2,\varepsilon }\varphi {dxdt}-\varepsilon ^{-\gamma }\int
\limits_{S_{\varepsilon }^{2,T}}\partial _{t}p_{\varepsilon }\varphi {dsdt}%
+\int \limits_{Q_{\varepsilon }^{T}}\nabla p_{\varepsilon }\nabla
(W_{2,\varepsilon }\varphi ){dxdt}=\int \limits_{Q_{\varepsilon }^{T}}\nabla
u_{\varepsilon }\nabla (W_{2,\varepsilon }\varphi ){dxdt}.
\end{equation*}%
The first integral in the left-hand side converges to zero due to the
properties of $W_{2,\varepsilon }$, thus, we have 
\begin{equation*}
-\varepsilon ^{-\gamma }\int \limits_{S_{\varepsilon }^{2,T}}\partial
_{t}p_{\varepsilon }\varphi {dsdt}=\int \limits_{Q_{\varepsilon }^{T}}\nabla
(u_{\varepsilon }-p_{\varepsilon })\nabla (W_{2,\varepsilon }\varphi ){dxdt}%
+\zeta _{\varepsilon },
\end{equation*}%
Using \eqref{w limit relation}, \eqref{p limit with u} and 
\eqref{thm: u
limit rel}, we derive 
\begin{gather*}
-\varepsilon ^{-\gamma }\int \limits_{S_{\varepsilon }^{2,T}}\partial
_{t}p_{\varepsilon }\varphi {dsdt}=-\mathcal{A}_{n}\int%
\limits_{w^{T}}(u_{0}-p_{0})\varphi {dxdt}+\varepsilon ^{-\gamma }\mathcal{B}%
_{n}\int \limits_{S_{\varepsilon }^{2,T}}(u_{\varepsilon }-p_{\varepsilon
})\varphi {dsdt} + \zeta_\varepsilon \\
=-\mathcal{A}_{n}\int \limits_{w^{T}}(u_{0}-p_{0})\varphi {dxdt}-\mathcal{A}%
_{n}\mathcal{B}_{n}\int \limits_{w^{T}}p_{0}G(\varphi ){dxdt}+\varepsilon
^{-\gamma }\mathcal{B}_{n}(\mathcal{B}_{n}+N^{-1})\int
\limits_{S_{\varepsilon }^{2,T}}u_{\varepsilon }G(\varphi ){dsdt} +
\zeta_\varepsilon \\
=\mathcal{A}_{n}\int \limits_{w^{T}}p_{0}(\varphi -\mathcal{B}_{n}G(\varphi
)){dxdt}-\mathcal{A}_{n}\int \limits_{w^{T}}u_{0}\varphi {dxdt}+\mathcal{A}%
_{n}\mathcal{B}_{n}(\mathcal{B}_{n}+N^{-1})\int
\limits_{Q^{T}}H(u_{0})G(\varphi ){dxdt} \\
-N^{-1}\mathcal{A}_{n}\mathcal{B}_{n}(\mathcal{B}_{n}+N^{-1})\int%
\limits_{w^{T}}p_{0}G(H^{\ast }(G(\varphi ))){dxdt} + \zeta_\varepsilon.
\end{gather*}%
Combining the terms with $p_{0}$, we get the expression 
\begin{equation*}
\varphi -\mathcal{B}_{n}G(\varphi )-N^{-1}\mathcal{B}_{n}(\mathcal{B}%
_{n}+N^{-1})G(H^{\ast }(G(\varphi )))\equiv E(\varphi ),
\end{equation*}%
Using relation \textit{(ii)} from Lemma~\ref{lem:aux relations}, we derive 
\begin{equation*}
E(\varphi )=\varphi -\mathcal{B}_{n}H(\varphi ).
\end{equation*}%
Therefore, the term with $p_{0}$ is 
\begin{equation*}
\mathcal{A}_{n}\int \limits_{\omega ^{T}}p_{0}(\varphi -\mathcal{B}%
_{n}H(\varphi )){dxdt}=\mathcal{A}_{n}\int \limits_{\omega ^{T}}(p_{0}-%
\mathcal{B}_{n}H^{\ast }(p_{0}))\varphi {dxdt}.
\end{equation*}%
Putting it all together, we derive 
\begin{gather}
-\varepsilon ^{-\gamma }\int \limits_{S_{\varepsilon }^{2,T}}\partial
_{t}p_{\varepsilon }\varphi {dsdt}=\mathcal{A}_{n}\int \limits_{w^{T}}(p_{0}-%
\mathcal{B}_{n}H^{\ast }(p_{0}))\varphi {dxdt}-  \notag \\
-\mathcal{A}_{n}\int \limits_{w^{T}}(u_{0}-\mathcal{B}_{n}(\mathcal{B}%
_{n}+N^{-1})H^{\ast }(G(u_{0})))\varphi {dxdt}.  \label{pa_t p cont}
\end{gather}

\textit{Step B.2. }Next, we deal with the parts related to $S_{\varepsilon
}^{1,T}$. We take $W_{1,\varepsilon }M(\varphi )$ as a test function in the
integral identity \eqref{int ident init adjoint prob}, and get 
\begin{gather*}
-\int \limits_{Q_{\varepsilon }^{T}}\partial _{t}p_{\varepsilon
}W_{1,\varepsilon }M(\varphi ){dxdt}-\varepsilon ^{-\gamma }\int
\limits_{S_{\varepsilon }^{1,T}}\partial _{t}p_{\varepsilon }M(\varphi ){dsdt%
}+\int \limits_{Q_{\varepsilon }^{T}}\nabla W_{1,\varepsilon }\nabla
(p_{\varepsilon }M(\varphi )){dxdt}= \\
=\int \limits_{Q_{\varepsilon }^{T}}\nabla W_{1,\varepsilon }\nabla
(u_{\varepsilon }M(\varphi ))dxdt + \zeta _{\varepsilon }.
\end{gather*}%
Using integral identity \eqref{int ident u} taken with the same test
function, we transform the last relation to the form 
\begin{gather*}
\int \limits_{Q_{\varepsilon }^{T}}\nabla W_{1,\varepsilon }\nabla
(p_{\varepsilon }M(\varphi )){dxdt}-\varepsilon ^{-\gamma }\int
\limits_{S_{\varepsilon }^{1,T}}\partial _{t}(p_{\varepsilon
}-u_{\varepsilon })M(\varphi ){dsdt}= \\
=\int \limits_{Q_{\varepsilon }^{T}}\partial _{t}(p_{\varepsilon
}-u_{\varepsilon })W_{1,\varepsilon }M(\varphi ){dxdt}+\int
\limits_{Q_{\varepsilon }^{T}}fW_{1,\varepsilon }M(\varphi ){dxdt}+\zeta
_{\varepsilon }.
\end{gather*}%
Properties of $W_{1,\varepsilon }$ imply that the terms at the right-hand
side converges to zero as $\varepsilon \rightarrow 0$. We use 
\eqref{w limit
relation} and transform the first term in the left-hand side and finally get 
\begin{equation*}
-\varepsilon ^{-\gamma }\int \limits_{S_{\varepsilon }^{1,T}}\partial
_{t}(p_{\varepsilon }-u_{\varepsilon })M(\varphi ){dsdt}+\varepsilon
^{-\gamma }\int \limits_{S_{\varepsilon }^{1,T}}p_{\varepsilon }M(\varphi ){%
dsdt}=\mathcal{A}_{n}\int \limits_{(\Omega \setminus \overline{\omega }%
)\times (0,T)}p_{0}M(\varphi ){dxdt}+\zeta _{\varepsilon }.
\end{equation*}%
Integrating by parts in the first term, we conclude 
\begin{equation*}
\varepsilon ^{-\gamma }\int \limits_{S_{\varepsilon }^{1,T}}(p_{\varepsilon
}-u_{\varepsilon })\partial _{t}M(\varphi ){dsdt}+\varepsilon ^{-\gamma
}\int \limits_{S_{\varepsilon }^{1,T}}p_{\varepsilon }M(\varphi ){dsdt}=%
\mathcal{A}_{n}\int \limits_{(\Omega \setminus \overline{\omega })\times
(0,T)}p_{0}M(\varphi ){dxdt}+\zeta _{\varepsilon }.
\end{equation*}%
Using the definition of $M^{\ast }$ and the convergence \eqref{u conv uncont}%
, we derive 
\begin{gather}
\varepsilon ^{-\gamma }\int \limits_{S_{\varepsilon }^{1,T}}p_{\varepsilon
}\varphi {dsdt}=\varepsilon ^{-\gamma }\int \limits_{S_{\varepsilon
}^{1,T}}p_{\varepsilon }(\partial _{t}M(\varphi )+\mathcal{B}_{n}M(\varphi ))%
{dsdt}  \notag \\
=\mathcal{A}_{n}\int \limits_{(\Omega \setminus \overline{\omega })\times
(0,T)}p_{0}M(\varphi ){dxdt}+\mathcal{A}_{n}\int \limits_{(\Omega \setminus 
\overline{\omega })\times (0,T)}M(u_{0})(\varphi -\mathcal{B}_{n}M(\varphi ))%
{dxdt}+\zeta _{\varepsilon }.  \label{p W_1 relation}
\end{gather}%
Lastly, we take $W_{1,\varepsilon }\varphi $ as a test function in the
integral identity \eqref{int ident init adjoint prob}, and using 
\eqref{pa_t
u conv uncontrol} and \eqref{p W_1 relation}, we get 
\begin{gather*}
-\varepsilon ^{-\gamma }\int \limits_{S_{\varepsilon }^{1,T}}\partial
_{t}p_{\varepsilon }\varphi {dsdt}=\int \limits_{Q_{\varepsilon }^{T}}\nabla
(u_{\varepsilon }-p_{\varepsilon })\nabla (W_{1,\varepsilon }\varphi ){dxdt}%
+\zeta _{\varepsilon } \\
=\mathcal{A}_{n}\int \limits_{(\Omega \setminus \overline{\omega })\times
(0,T)}(p_{0}-u_{0})\varphi {dxdt}+\varepsilon ^{-\gamma }\mathcal{B}_{n}\int
\limits_{S_{\varepsilon }^{1,T}}(u_{\varepsilon }-p_{\varepsilon })\varphi {%
dsdt}+\zeta _{\varepsilon } \\
=\mathcal{A}_{n}\int \limits_{(\Omega \setminus \overline{\omega })\times
(0,T)}(p_{0}-u_{0})\varphi {dxdt}+\mathcal{A}_{n}\mathcal{B}_{n}\int
\limits_{(\Omega \setminus \overline{\omega })\times (0,T)}M(u_{0})\varphi {%
dxdt} \\
-\mathcal{A}_{n}\mathcal{B}_{n}\int \limits_{(\Omega \setminus \overline{%
\omega })\times (0,T)}p_{0}M(\varphi ){dxdt}-\mathcal{A}_{n}\mathcal{B}%
_{n}\int \limits_{(\Omega \setminus \overline{\omega })\times
(0,T)}M(u_{0})(\varphi -\mathcal{B}_{n}M(\varphi )){dxdt}+\zeta
_{\varepsilon }.
\end{gather*}%
Grouping similar terms, we derive 
\begin{gather}
-\varepsilon ^{-\gamma }\int \limits_{S_{\varepsilon }^{1,T}}\partial
_{t}p_{\varepsilon }\varphi {dsdt}=\mathcal{A}_{n}\int \limits_{(\Omega
\setminus \overline{\omega })\times (0,T)}(p_{0}-\mathcal{B}_{n}M^{\ast
}(p_{0}))\varphi {dxdt}  \notag \\
-\mathcal{A}_{n}\int \limits_{(\Omega \setminus \overline{\omega })\times
(0,T)}(u_{0}-\mathcal{B}_{n}^{2}M^{\ast }(M(u_{0})))\varphi {dxdt}+\zeta
_{\varepsilon }.  \label{pa_t p uncont}
\end{gather}

\textit{Step B.3. }Now, using convergences \eqref{pa_t p cont} and %
\eqref{pa_t p uncont}, we can pass to the limit as $\varepsilon \rightarrow
0 $ in \eqref{int ident init adjoint
prob}, and get the integral identity for $p_{0}$ 
\begin{gather}
-\int \limits_{Q^{T}}\partial _{t}p_{0}\varphi {dxdt}+\int
\limits_{Q^{T}}\nabla p_{0}\nabla \varphi {dxdt}  \notag \\
+\mathcal{A}_{n}\int \limits_{\omega ^{T}}(p_{0}-\mathcal{B}_{n}H^{\ast
}(p_{0}))\varphi {dxdt}+\mathcal{A}_{n}\int \limits_{(\Omega \setminus 
\overline{\omega })\times (0,T)}(p_{0}-\mathcal{B}_{n}M^{\ast
}(p_{0}))\varphi {dxdt}  \notag \\
=\int \limits_{Q^{T}}\nabla u_{0}\nabla \varphi {dxdt}+\mathcal{A}_{n}\int
\limits_{Q^{T}}(u_{0}-\mathcal{B}_{n}(\mathcal{B}_{n}+N^{-1})G^{\ast
}(H(u_{0}))\varphi {dxdt}  \notag \\
+\mathcal{A}_{n}\int \limits_{(\Omega \setminus \overline{\omega })\times
(0,T)}(u_{0}-\mathcal{B}_{n}^{2}M^{\ast }(M(u_{0})))\varphi {dxdt},
\label{p_0 int ident}
\end{gather}%
that is valid for an arbitrary function $\varphi \in
L^{2}(0,T;H_{0}^{1}(\Omega ))$. Thus, the limit adjoint problem is 
\begin{equation}
\left \{ 
\begin{array}{ll}
-\partial _{t}p_{0}-\Delta p_{0}+ &  \\ 
+\mathcal{A}_{n}(p_{0}-\mathcal{B}_{n}H^{\ast }(p_{0}))\chi _{\omega ^{T}}+%
\mathcal{A}_{n}(p_{0}-\mathcal{B}_{n}M^{\ast }(p_{0}))\chi _{(\Omega
\setminus \overline{\omega })\times (0,T)}= &  \\ 
=-\Delta u_{0}+\mathcal{A}_{n}(u_{0}-\mathcal{B}_{n}(\mathcal{B}%
_{n}+N^{-1})G^{\ast }(H(u_{0}))\chi _{\omega ^{T}}+ &  \\ 
+\mathcal{A}_{n}(u_{0}-\mathcal{B}_{n}^{2}M^{\ast }(M(u_{0})))\chi _{(\Omega
\setminus \overline{\omega })\times (0,T)}, & (x,t)\in Q^{T} \\ 
p(x,T)=u_{0}(x,T), & x\in \Omega , \\ 
p(x,t)=0, & (x,t)\in \Gamma ^{T}.%
\end{array}%
\right.  \label{p_0 limit prob}
\end{equation}%
This concludes the proof of the homogenization theorem.
\end{proof}

\section{Proof of the limit cost functional and controls convergence theorems%
}

In this Section we will complete the characterization of the limit optimal
control.

\begin{proof}
of Theorem \ref{thm: cost func limit}. We start with the expression of $%
J_{\varepsilon }$ taken at $v_{\varepsilon }=-N^{-1}p_{\varepsilon }\chi
_{S_{\varepsilon }^{2,T}}$: 
\begin{gather*}
2J_{\varepsilon }(v_{\varepsilon })=\int \limits_{Q_{\varepsilon
}^{T}}|\nabla u_{\varepsilon }|^{2}{dxdt}+\int \limits_{\Omega _{\varepsilon
}}|u_{\varepsilon }(x,T)|^{2}{dx} \\
+\varepsilon ^{-\gamma }\int \limits_{S_{\varepsilon }}|u_{\varepsilon
}(x,T)|^{2}{ds}+N^{-1}\varepsilon ^{-\gamma }\int \limits_{S_{\varepsilon
}^{2,T}}p_{\varepsilon }^{2}{dsdt}.
\end{gather*}%
Using integral identity \eqref{int ident u}, we get 
\begin{gather}
2J_{\varepsilon }(v_{\varepsilon })=\int \limits_{Q_{\varepsilon
}^{T}}|\nabla u_{\varepsilon }|^{2}{dxdt}+\int \limits_{\Omega _{\varepsilon
}}|u_{\varepsilon }(x,T)|^{2}{dx}+\varepsilon ^{-\gamma }\int
\limits_{S_{\varepsilon }}|u_{\varepsilon }(x,T)|^{2}{ds}+\int
\limits_{Q_{\varepsilon }^{T}}fp_{\varepsilon }{dxdt}  \notag \\
-\int \limits_{Q_{\varepsilon }^{T}}\nabla u_{\varepsilon }\nabla
p_{\varepsilon }{dxdt}-\int \limits_{Q_{\varepsilon }^{T}}\partial
_{t}u_{\varepsilon }p_{\varepsilon }{dxdt}-\varepsilon ^{-\gamma }\int
\limits_{S_{\varepsilon }^{T}}\partial _{t}u_{\varepsilon }p_{\varepsilon }{%
dsdt}.  \label{thm: cost func step 1}
\end{gather}%
From the integral identity \eqref{int ident init adjoint prob}, we have 
\begin{gather*}
-\int \limits_{Q_{\varepsilon }^{T}}\nabla u_{\varepsilon }\nabla
p_{\varepsilon }{dxdt}-\int \limits_{Q_{\varepsilon }^{T}}\partial
_{t}u_{\varepsilon }p_{\varepsilon }{dxdt}-\varepsilon ^{-\gamma }\int
\limits_{S_{\varepsilon }^{T}}\partial _{t}u_{\varepsilon }p_{\varepsilon }{%
dsdt} \\
=-\int \limits_{Q_{\varepsilon }^{T}}\nabla p_{\varepsilon }\nabla
u_{\varepsilon }{dxdt}+\int \limits_{Q_{\varepsilon }^{T}}\partial
_{t}p_{\varepsilon }u_{\varepsilon }{dxdt}+\varepsilon ^{-\gamma }\int
\limits_{S_{\varepsilon }^{T}}\partial _{t}p_{\varepsilon }u_{\varepsilon }{%
dsdt} \\
-\varepsilon ^{-\gamma }\int \limits_{S_{\varepsilon }}|u_{\varepsilon
}(x,T)|^{2}{ds}-\int \limits_{\Omega_{\varepsilon }}|u_{\varepsilon
}(x,T)|^{2}{dx} \\
=-\int \limits_{Q_{\varepsilon }^{T}}|\nabla u_{\varepsilon }|^{2}{dxdt}%
-\varepsilon ^{-\gamma }\int \limits_{S_{\varepsilon }}|u_{\varepsilon
}(x,T)|^{2}{ds}-\int \limits_{\Omega_{\varepsilon }}|u_{\varepsilon
}(x,T)|^{2}{dx}.
\end{gather*}%
Substituting this equality into \eqref{thm: cost func step 1}, we get 
\begin{equation*}
2J_{\varepsilon }(v_{\varepsilon })=\int \limits_{Q_{\varepsilon
}^{T}}fp_{\varepsilon }{dxdt}.
\end{equation*}%
Then, passing to the limit as $\varepsilon \rightarrow 0$ we have 
\begin{equation*}
\lim \limits_{\varepsilon \rightarrow 0}2J_{\varepsilon }(v_{\varepsilon
})=\int \limits_{Q^{T}}fp_{0}{dxdt}.
\end{equation*}%
Using the integral identity \eqref{u_0 int ident} for the function $u_{0}$,
we get 
\begin{gather*}
\int \limits_{Q^{T}}fp_{0}{dxdt}=\int \limits_{Q^{T}}\partial _{t}u_{0}p_{0}{%
dxdt}+\int \limits_{Q^{T}}\nabla u_{0}\nabla p_{0}{dxdt}+\mathcal{A}_{n}\int
\limits_{\omega ^{T}}(u_{0}-\mathcal{B}_{n}H(u_{0}))p_{0}{dxdt} \\
+\mathcal{A}_{n}\int \limits_{(\Omega \setminus \overline{\omega })\times
(0,T)}(u_{0}-\mathcal{B}_{n}M(u_{0}))p_{0}{dxdt}+N^{-1}\mathcal{A}_{n}%
\mathcal{B}_{n}\int \limits_{\omega ^{T}}H(G^{\ast }(p_{0}))p_{0}{dxdt}= \\
=-\int \limits_{Q^{T}}\partial _{t}p_{0}u_{0}{dxdt}+\int \limits_{\Omega
}|u_{0}(x,T)|^{2}{dx}+\int \limits_{Q^{T}}\nabla p_{0}\nabla u_{0}{dxdt} \\
+\mathcal{A}_{n}\int \limits_{\omega ^{T}}(p_{0}-\mathcal{B}_{n}H^{\ast
}(p_{0}))u_{0}{dxdt}+\mathcal{A}_{n}\int \limits_{(\Omega \setminus 
\overline{\omega })\times (0,T)}(p_{0}-\mathcal{B}_{n}M^{\ast }(p_{0}))u_{0}{%
dxdt} \\
+N^{-1}\mathcal{A}_{n}\mathcal{B}_{n}\int \limits_{\omega ^{T}}H(G^{\ast
}(p_{0}))p_{0}{dxdt}.
\end{gather*}%
Then, we use the integral identity \eqref{p_0 int ident} for the function $%
p_{0}$ and duality relation for the operators $H$ and $M$, to conclude that 
\begin{gather*}
\int \limits_{Q^{T}}fp_{0}{dxdt}=\int \limits_{\Omega }|u_{0}(x,T)|^{2}{dxdt}%
+\int \limits_{Q^{T}}|\nabla u_{0}|^{2}{dxdt} \\
+\mathcal{A}_{n}\int \limits_{\omega ^{T}}(u_{0}-\mathcal{B}_{n}(\mathcal{B}%
_{n}+N^{-1})G^{\ast }(H(u_{0})))u_{0}{dxdt} \\
+\mathcal{A}_{n}\int \limits_{(\Omega \setminus \overline{\omega })\times
(0,T)}(u_{0}-\mathcal{B}_{n}^{2}M^{\ast }(M(u_{0})))u_{0}{dxdt}+N^{-1}%
\mathcal{A}_{n}\mathcal{B}_{n}\int \limits_{\omega ^{T}}H(G^{\ast
}(p_{0}))p_{0}{dxdt}.
\end{gather*}%
Let us show that the derived expression is non-negative. According to the
definition of $M$ and $M^{\ast }$, we have 
\begin{gather*}
\int \limits_{(\Omega \setminus \overline{\omega })\times (0,T)}(u_{0}-%
\mathcal{B}_{n}^{2}M^{\ast }(M(u_{0})))u_{0}{dxdt}=\int \limits_{(\Omega
\setminus \overline{\omega })\times (0,T)}(u_{0}^{2}-\mathcal{B}%
_{n}^{2}M^{2}(u_{0})){dxdt} \\
=\int \limits_{(\Omega \setminus \overline{\omega })\times (0,T)}((\partial
_{t}M(u_{0})+\mathcal{B}_{n}M(u_{0}))^{2}-\mathcal{B}_{n}^{2}M^{2}(u_{0})){%
dxdt} \\
=\int \limits_{(\Omega \setminus \overline{\omega })\times (0,T)}(\partial
_{t}M(u_{0}))^{2}{dxdt}+\mathcal{B}_{n}\int \limits_{\Omega \setminus 
\overline{\omega }}|M(u_{0})(x,T)|^{2}{dx} \\
=\int \limits_{(\Omega \setminus \overline{\omega })\times (0,T)}(u_{0}-%
\mathcal{B}_{n}M(u_{0}))^{2}{dxdt}+\mathcal{B}_{n}\int \limits_{\Omega
\setminus \overline{\omega }}|M(u_{0})(x,T)|^{2}{dx} \ge 0.
\end{gather*}

Using that $G^{\ast }(H(u_{0}))$ is the solution to the problem 
\eqref{second order
ODE} with boundary conditions \eqref{bound cond ODE}, we get 
\begin{gather*}
\int \limits_{\omega ^{T}}(u_{0}-\mathcal{B}_{n}(\mathcal{B}%
_{n}+N^{-1})G^{\ast }(H(u_{0})))u_{0}{dxdt} \\
=\int \limits_{\omega ^{T}}(u_{0}-\partial _{tt}^{2}G^{\ast
}(H(u_{0}))-u_{0})u_{0}{dxdt}=-\int \limits_{\omega ^{T}}\partial
_{tt}^{2}G^{\ast }(H(u_{0}))u_{0}{dxdt} \\
=\int \limits_{\omega ^{T}}\partial _{tt}^{2}G^{\ast }(H(u_{0}))(\partial
_{tt}^{2}G^{\ast }(H(u_{0}))-\mathcal{B}_{n}(\mathcal{B}_{n}+N^{-1})G^{\ast
}(H(u_{0}))){dxdt} \\
=\int \limits_{\omega ^{T}}(\partial _{tt}^{2}G^{\ast }(H(u_{0})))^{2}{dxdt}-%
\mathcal{B}_{n}(\mathcal{B}_{n}+N^{-1})\int \limits_{\omega ^{T}}\partial
_{tt}^{2}G^{\ast }(H(u_{0}))G^{\ast }(H({u_{0}})){dxdt} \\
=\int \limits_{\omega ^{T}}(\partial _{tt}^{2}G^{\ast }(H(u_{0})))^{2}{dxdt}+%
\mathcal{B}_{n}(\mathcal{B}_{n}+N^{-1})\int \limits_{\omega ^{T}}(\partial
_{t}G^{\ast }(H(u_{0})))^{2}{dxdt} \\
+\mathcal{B}_{n}\int \limits_{\omega }(\partial _{t}G^{\ast
}(H(u_{0}))(x,0))^{2}{dx} \ge 0.
\end{gather*}%
From here, we already see that the expression is non-zero, however, we
convert it to a form that is more similar to the one derived for the terms
with $M$. We have 
\begin{gather*}
\mathcal{B}_{n}\int \limits_{\omega }(\partial _{t}G^{\ast
}(H(u_{0}))(x,0))^{2}{dx} \\
=\mathcal{B}_{n}\int \limits_{\omega }((\partial _{t}G^{\ast
}(H(u_{0}))(x,0))^{2}-(\partial _{t}G^{\ast }(H(u_{0}))(x,T))^{2}){dx} \\
+\mathcal{B}_{n}\int \limits_{\omega }(\partial _{t}G^{\ast
}(H(u_{0}))(x,T))^{2}{dx} \\
=-\mathcal{B}_{n}\int \limits_{\omega ^{T}}\partial _{t}|\partial
_{t}G^{\ast }(H(u_{0}))|^{2}{dxdt}+\mathcal{B}_{n}\int \limits_{\omega
}(\partial _{t}G^{\ast }(H(u_{0}))(x,T))^{2}{dx} \\
=-2\mathcal{B}_{n}\int \limits_{\omega ^{T}}\partial _{tt}^{2}G^{\ast
}(H(u_{0}))\partial _{t}G^{\ast }(H(u_{0})){dxdt} +\mathcal{B}_{n}\int
\limits_{\omega }(\partial _{t}G^{\ast }(H(u_{0}))(x,T))^{2}{dx}.
\end{gather*}%
Note, that from the definition of $G^{\ast }(H(u_{0}))$, we have 
\begin{equation*}
\partial _{t}G^{\ast }(H(u_{0}))(x,T)=(\mathcal{B}_{n}+N^{-1})G^{\ast
}(H(u_{0}))(x,T)-H(u_{0})(x,T)=-H(u_{0})(x,T).
\end{equation*}%
Thus, we transform the last equality to 
\begin{gather*}
\mathcal{B}_{n}\int \limits_{\omega }(\partial _{t}G^{\ast
}(H(u_{0}))(x,0))^{2}{dx}= \\
=-2\mathcal{B}_{n}\int \limits_{\omega ^{T}}\partial _{tt}^{2}G^{\ast
}(H(u_{0}))\partial _{t}G^{\ast }(H(u_{0})){dxdt} + \mathcal{B}_{n}\int
\limits_{\omega }|H(u_{0})(x,T)|^{2}{dx}.
\end{gather*}%
Now, we put this into the original expression and get 
\begin{gather*}
\int \limits_{\omega ^{T}}(u_{0}-\mathcal{B}_{n}(\mathcal{B}%
_{n}+N^{-1})G^{\ast }(H(u_{0})))u_{0}{dxdt}= \\
=\int \limits_{\omega ^{T}}(\partial _{tt}^{2}G^{\ast }(H(u_{0})))^{2}{dxdt}+%
\mathcal{B}_{n}(\mathcal{B}_{n}+N^{-1})\int \limits_{\omega ^{T}}(\partial
_{t}G^{\ast }(H(u_{0})))^{2}{dxdt}- \\
-2\mathcal{B}_{n}\int \limits_{\omega ^{T}}\partial _{tt}^{2}G^{\ast
}(H(u_{0}))\partial _{t}G^{\ast }(H(u_{0})){dxdt} +\mathcal{B}_{n}\int
\limits_{\omega }|H(u_{0})(x,T)|^{2}{dx}.
\end{gather*}%
We transform it in the following way 
\begin{gather*}
\int \limits_{\omega ^{T}}(u_{0}-\mathcal{B}_{n}(\mathcal{B}%
_{n}+N^{-1})G^{\ast }(H(u_{0})))u_{0}{dxdt} \\
=\int \limits_{\omega ^{T}}(\mathcal{B}_{n}\partial _{t}G^{\ast
}(H(u_{0}))-\partial _{tt}^{2}G^{\ast }(H(u_{0})))^{2}{dxdt}+N^{-1}\mathcal{B%
}_{n}\int \limits_{\omega ^{T}}(\partial _{t}G^{\ast }(H(u_{0})))^{2}{dxdt}
\\
+\mathcal{B}_{n}\int \limits_{\omega }|H(u_{0})(x,T)|^{2}{dx}=\int
\limits_{\omega ^{T}}(\partial _{t}H(u_{0})-N^{-1}\partial _{t}G^{\ast
}(H(u_{0})))^{2}{dxdt} \\
+N^{-1}\mathcal{B}_{n}\int \limits_{\omega ^{T}}(\partial _{t}G^{\ast
}(H(u_{0})))^{2}{dxdt}+\mathcal{B}_{n}\int \limits_{\omega
}|H(u_{0})(x,T)|^{2}{dx}.
\end{gather*}
Using again the definition of operators $G^{\ast }$ and $H$, we have 
\begin{gather*}
\partial _{t}H(u_{0})-N^{-1}\partial _{t}G^{\ast }(H(u_{0}))=u_{0}-(\mathcal{%
B}_{n}+N^{-1})H(u_{0})+N^{-1}(\mathcal{B}_{n}+N^{-1})G^{\ast }(H(u_{0})) \\
+N^{-1}H(u_{0})-N^{-1}(\mathcal{B}_{n}+N^{-1})G^{\ast }(H(u_{0}))=u_{0}-%
\mathcal{B}_{n}H(u_{0}).
\end{gather*}%
Thus, we conclude 
\begin{gather*}
\int \limits_{\omega ^{T}}(u_{0}-\mathcal{B}_{n}(\mathcal{B}%
_{n}+N^{-1})G^{\ast }(H(u_{0})))u_{0}{dxdt} \\
=\int \limits_{\omega ^{T}}|u_{0}-\mathcal{B}_{n}H(u_{0})|^{2}{dxdt}+N^{-1}%
\mathcal{B}_{n}\int \limits_{\omega ^{T}}(\partial _{t}G^{\ast
}(H(u_{0})))^{2}{dxdt}+\mathcal{B}_{n}\int \limits_{\omega
}|H(u_{0})(x,T)|^{2}{dx}.
\end{gather*}

Using that $H(G^{\ast }(p_{0}))$ is a solution \eqref{second order ODE} we
get 
\begin{gather*}
\int \limits_{\omega ^{T}}H(G^{\ast }(p_{0}))p_{0}{dxdt}=-\int
\limits_{\omega ^{T}}H(G^{\ast }(p_{0}))(\partial _{tt}^{2}H(G^{\ast
}(p_{0}))-\mathcal{B}_{n}(\mathcal{B}_{n}+N^{-1})H(G^{\ast }(p_{0}))){dxdt}
\\
=\int \limits_{\omega ^{T}}(\partial _{t}H(G^{\ast }(p_{0})))^{2}{dxdt}-\int
\limits_{\omega }\partial _{t}(H(G^{\ast }(p_{0})))(x,T)H(G^{\ast
}(p_{0}))(x,T){dx} \\
+\mathcal{B}_{n}(\mathcal{B}_{n}+N^{-1})\int \limits_{\omega ^{T}}(H(G^{\ast
}(p_{0})))^{2}{dxdt}=\int \limits_{\omega ^{T}}(\partial _{t}H(G^{\ast
}(p_{0})))^{2}{dxdt} \\
+\mathcal{B}_{n}(\mathcal{B}_{n}+N^{-1})\int \limits_{\omega ^{T}}(H(G^{\ast
}(p_{0})))^{2}{dxdt}+(\mathcal{B}_{n}+N^{-1})\int \limits_{\omega
}(H(G^{\ast }(p_{0}))(x,T))^{2}{dx}.
\end{gather*}

Thus, we have 
\begin{gather*}
\lim \limits_{\varepsilon \rightarrow 0}2J_{\varepsilon }(v_{\varepsilon
})=\Vert \nabla u_{0}\Vert _{L^{2}(Q^{T})}^{2}+\Vert u_{0}(x,T)\Vert
_{L^{2}(\Omega )}^{2} \\
+\mathcal{A}_{n}\int \limits_{(\Omega \setminus \overline{\omega })\times
(0,T)}|u_{0}-\mathcal{B}_{n}M(u_{0})|^{2}{dxdt}+\mathcal{A}_{n}\mathcal{B}%
_{n}\int \limits_{\Omega \setminus \overline{\omega }}|M(u_{0})(x,T)|^{2}{dx}
\\
+\mathcal{A}_{n}\int \limits_{\omega ^{T}}|u_{0}-\mathcal{B}_{n}H(u_{0})|^{2}%
{dxdt}+\mathcal{A}_{n}\mathcal{B}_{n}\int \limits_{\omega
}|H(u_{0})(x,T)|^{2}{dx} \\
+N^{-1}\mathcal{A}_{n}\mathcal{B}_{n}\int \limits_{\omega ^{T}}(\partial
_{t}G^{\ast }(H(u_{0})))^{2}{dxdt}+N^{-1}\mathcal{A}_{n}\mathcal{B}_{n}\int
\limits_{\omega ^{T}}(\partial _{t}H(G^{\ast }(p_{0})))^{2}{dxdt} \\
+N^{-1}\mathcal{A}_{n}\mathcal{B}_{n}^{2}(\mathcal{B}_{n}+N^{-1})\int%
\limits_{\omega ^{T}}(H(G^{\ast }(p_{0})))^{2}{dxdt} \\
+N^{-1}\mathcal{A}_{n}\mathcal{B}_{n}(\mathcal{B}_{n}+N^{-1})\int
\limits_{\omega }(H(G^{\ast }(p_{0}))(x,T))^{2}{dx} =
J_0(-N^{-1}H(G^*(p_0))).
\end{gather*}

This concludes the proof.
\end{proof}

\bigskip

Finally, we will show that the system \eqref{homogenized system} is related
to the limit functional and the limit optimal control $v_{0}$.

\bigskip

\begin{proof}
of Theorem \ref{Thm convergence controls}. If $v_{0}$ is the optimal
control, then for any admissible function $v\in U_{ad}$ we have 
\begin{equation*}
J_{0}^{\prime }(v_{0})v = \lim \limits_{\lambda \rightarrow 0}\frac{%
J_{0}(v_{0}+\lambda v)-J_{0}(v_{0})}{\lambda }=0.
\end{equation*}%
We set $\theta =(u_{0}(v_{0}+\lambda v)-u_{0}(v_{0}))/\lambda $. The
function $\theta $ is a solution to the following problem 
\begin{equation}
\left \{ 
\begin{array}{ll}
\partial _{t}\theta -\Delta \theta +\mathcal{A}_{n}(\theta -\mathcal{B}%
_{n}H(\theta ))\chi _{\omega ^{T}} &  \\ 
+\mathcal{A}_{n}(\theta -\mathcal{B}_{n}M(\theta ))\chi _{(\Omega \setminus 
\overline{\omega })\times (0,T)}=\mathcal{A}_{n}\mathcal{B}_{n}v\chi
_{\omega ^{T}}, & (x,t)\in Q^{T}, \\ 
\theta (x,0)=0, & x\in \Omega , \\ 
\theta (x,t)=0, & (x,t)\in \Gamma ^{T},%
\end{array}%
\right.  \label{limit theta prob}
\end{equation}%
note that due to the linearity of the operator $M$ and $H$, we have 
\begin{gather*}
(H(u_0(v_0 + \lambda v) - H(u_0(v_0)))/\lambda = H(\theta), \\
(M(u_0(v_0 + \lambda v) - M(u_0(v_0)))/\lambda = M(\theta).
\end{gather*}

Thus, we have 
\begin{gather}
J_{0}^{\prime }(v_{0})v=\int \limits_{Q^{T}}\nabla \theta \nabla u_{0}(v_{0})%
{dxdt}+\int \limits_{\Omega }\theta (x,T)u_{0}(v_{0})(x,T){dx}  \notag \\
+\mathcal{A}_{n}\int \limits_{(\Omega \setminus \overline{\omega })\times
(0,T)}(\theta -\mathcal{B}_{n}M(\theta ))(u_{0}(v_{0})-\mathcal{B}%
_{n}M(u_{0}(v_{0}))){dxdt}  \notag \\
+\mathcal{A}_{n}\mathcal{B}_{n}\int \limits_{\Omega \setminus \overline{%
\omega }}M(\theta )(x,T)M(u_{0}(v_{0}))(x,T){dx}  \notag \\
+\mathcal{A}_{n}\int \limits_{\omega ^{T}}(\theta -\mathcal{B}_{n}H(\theta
))(u_{0}(v_{0})-\mathcal{B}_{n}H(u_{0}(v_{0}))){dxdt}  \notag \\
+\mathcal{A}_{n}\mathcal{B}_{n}\int \limits_{\omega }H(\theta
)(x,T)H(u_{0}(v_{0}))(x,T){dx}  \notag \\
+N^{-1}\mathcal{A}_{n}\mathcal{B}_{n}\int \limits_{\omega ^{T}}\partial
_{t}G(H^{\ast }(\theta ))\partial _{t}G(H^{\ast }(u_{0}(v_{0}))){dxdt}+N%
\mathcal{A}_{n}\mathcal{B}_{n}\int \limits_{\omega ^{T}}\partial
_{t}v_{0}\partial _{t}v{dxdt}  \notag \\
+N\mathcal{A}_{n}\mathcal{B}_{n}^{2}(\mathcal{B}_{n}+N^{-1})\int
\limits_{\omega ^{T}}v_{0}v{dxdt}+N\mathcal{A}_{n}\mathcal{B}_{n}(\mathcal{B}%
_{n}+N^{-1})\int \limits_{\omega }v_{0}(x,T)v(x,T){dx}.
\label{limit J deriv}
\end{gather}%
Now, we use that $p_{0}$ is a solution to the problem \eqref{p_0 limit
prob}, that is adjoint to \eqref{limit state problem func}, and get 
\begin{gather*}
\int \limits_{Q^{T}}\nabla \theta \nabla u_{0}(v_{0}){dxdt}=-\int
\limits_{Q^{T}}\partial _{t}p_{0}\theta {dxdt}+\int \limits_{Q^{T}}\nabla
p_{0}\nabla \theta {dxdt} \\
+\mathcal{A}_{n}\int \limits_{\omega ^{T}}(p_{0}-\mathcal{B}_{n}H^{\ast
}(p_{0}))\theta {dxdt}+\mathcal{A}_{n}\int \limits_{(\Omega \setminus 
\overline{\omega })\times (0,T)}(p_{0}-\mathcal{B}_{n}M^{\ast
}(p_{0}))\theta {dxdt} \\
-\mathcal{A}_{n}\int \limits_{\omega ^{T}}(u_{0}-\mathcal{B}_{n}(\mathcal{B}%
_{n}+N^{-1})G^{\ast }(H(u_{0})))\theta {dxdt} \\
-\mathcal{A}_{n}\int \limits_{(\Omega \setminus \overline{\omega })\times
(0,T)}(u_{0}-\mathcal{B}_{n}^{2}M^{\ast }(M(u_{0})))\theta {dxdt}.
\end{gather*}%
As $\theta $ is a solution to the problem \eqref{limit theta prob}, we have 
\begin{gather}
\int \limits_{Q^{T}}\nabla \theta \nabla u_{0}(v_{0}){dxdt}=-\int
\limits_{\Omega }u_{0}(v_{0})(x,T)\theta (x,T){dx}+\mathcal{A}_{n}\mathcal{B}%
_{n}\int \limits_{\omega ^{T}}vp_{0}{dxdt}  \notag \\
-\mathcal{A}_{n}\int \limits_{\omega ^{T}}(u_{0}-\mathcal{B}_{n}(\mathcal{B}%
_{n}+N^{-1})G^{\ast }(H(u_{0}))\theta {dxdt}-\mathcal{A}_{n}\int
\limits_{(\Omega \setminus \overline{\omega })\times (0,T)}(u_{0}-\mathcal{B}%
_{n}^{2}M^{\ast }(M(u_{0})))\theta {dxdt.}  \label{nabla theta rel}
\end{gather}%
Using the same transformations as in the proof of the theorem above, we have 
\begin{gather}
\int \limits_{(\Omega \setminus \overline{\omega })\times
(0,T)}(u_{0}(v_{0})-\mathcal{B}_{n}^{2}M^{\ast }(M(u_{0}(v_{0})))\theta {dxdt%
}  \notag \\
=\int \limits_{(\Omega \setminus \overline{\omega })\times
(0,T)}(u_{0}(v_{0})\theta -\mathcal{B}_{n}^{2}M(u_{0}(v_{0})))M(\theta ){dxdt%
}  \notag \\
=\int \limits_{(\Omega \setminus \overline{\omega })\times (0,T)}((\partial
_{t}M(u_{0}(v_{0}))+\mathcal{B}_{n}M(u_{0}(v_{0})))((\partial _{t}M(\theta )
\notag \\
+\mathcal{B}_{n}M(\theta ))-\mathcal{B}_{n}^{2}M(u_{0}(v_{0}))M(\theta ){dxdt%
}  \notag \\
=\int \limits_{(\Omega \setminus \overline{\omega })\times
(0,T)}(u_{0}(v_{0})-\mathcal{B}_{n}M(u_{0}(v_{0})))(\theta -\mathcal{B}%
_{n}M(\theta )){dxdt}  \notag \\
+\mathcal{B}_{n}\int \limits_{\Omega }M(u_{0}(v_{0}))(x,T)M(\theta )(x,T){dx}%
.  \label{theta and u rel 2}
\end{gather}%
And, similarly as in the proof of the theorem above, we get 
\begin{gather*}
\int \limits_{\omega ^{T}}(u_{0}(v_0)-\mathcal{B}_{n}(\mathcal{B}%
_{n}+N^{-1})G^{\ast }(H(u_{0}(v_0))))\theta {dxdt} \\
=\int \limits_{\omega ^{T}}(u_{0}(v_{0})-\mathcal{B}_{n}H(u_{0}(v_{0})))(%
\theta -\mathcal{B}_{n}H(\theta )){dxdt} \\
+\mathcal{B}_{n}\int \limits_{\omega }H(u_{0}(v_{0}))(x,T)H(\theta )(x,T){dx}
\\
+N^{-1}\mathcal{B}_{n}\int \limits_{\omega ^{T}}\partial _{t}G(H^{\ast
}(u_{0}(v_{0})))\partial _{t}G(H^{\ast }(\theta )){dxdt}.
\end{gather*}%
Indeed, we use that $G^{\ast }(H(u_{0}))$ is the solution to the problem 
\eqref{second order
ODE} with boundary conditions \eqref{bound cond ODE}, and derive 
\begin{gather*}
\int \limits_{\omega ^{T}}(u_{0}-\mathcal{B}_{n}(\mathcal{B}%
_{n}+N^{-1})G^{\ast }(H(u_{0})))\theta{dxdt} \\
=\int \limits_{\omega ^{T}}\partial _{tt}^{2}G^{\ast }(H(u_{0}))(\partial
_{tt}^{2}G^{\ast }(H(\theta))-\mathcal{B}_{n}(\mathcal{B}_{n}+N^{-1})G^{\ast
}(H(\theta))){dxdt} \\
=\int \limits_{\omega ^{T}}\partial _{tt}^{2}G^{\ast }(H(u_{0}))
\partial_{tt}^{2}G^{\ast }(H(\theta)){dxdt} - \mathcal{B}_{n}(\mathcal{B}%
_{n}+N^{-1})\int \limits_{\omega ^{T}}\partial _{tt}^{2}G^{\ast
}(H(u_{0}))G^{\ast }(H(\theta)){dxdt} \\
=\int \limits_{\omega ^{T}}\partial _{tt}^{2}G^{\ast }(H(u_{0}))
\partial_{tt}^{2}G^{\ast }(H(\theta)){dxdt} + \mathcal{B}_{n}(\mathcal{B}%
_{n}+N^{-1})\int \limits_{\omega ^{T}}\partial _{t}G^{\ast
}(H(u_{0}))\partial _{t}G^{\ast }(H(\theta)){dxdt} \\
+ \mathcal{B}_{n}\int \limits_{\omega }\partial _{t}G^{\ast
}(H(u_{0}))(x,0)\partial _{t}G^{\ast }(H(\theta))(x,0){dx}.
\end{gather*}

We transform the last term in the following way 
\begin{gather*}
\mathcal{B}_{n}\int \limits_{\omega }\partial _{t}G^{\ast
}(H(u_{0}))(x,0)\partial _{t}G^{\ast }(H(\theta))(x,0){dx} \\
=-\mathcal{B}_{n}\int \limits_{\omega ^{T}}\partial _{t}(\partial
_{t}G^{\ast }(H(u_{0}))G^{\ast }(H(\theta))){dxdt} \\
+ \mathcal{B}_{n}\int \limits_{\omega }\partial _{t}G^{\ast
}(H(u_{0}))(x,T)\partial _{t}G^{\ast }(H(\theta))(x,T){dx} \\
=-\mathcal{B}_{n}\int \limits_{\omega ^{T}}(\partial _{tt}^{2}G^{\ast
}(H(u_{0}))\partial _{t}G^{\ast }(H(\theta)) + \partial _{t} G^{\ast
}(H(u_{0}))\partial _{tt}^{2}G^{\ast }(H(\theta))){dxdt} \\
+ \mathcal{B}_{n}\int \limits_{\omega }\partial _{t}G^{\ast
}(H(u_{0}))(x,T)\partial _{t}G^{\ast }(H(\theta))(x,T){dx}.
\end{gather*}%
Note, that from the definition of $G^{\ast }(H(u_{0}))$, we have 
\begin{equation*}
\partial _{t}G^{\ast }(H(u_{0}))(x,T) = -H(u_{0})(x,T),\quad
\partial_{t}G^{\ast}(H(\theta))(x, T) = - H(\theta)(x, T).
\end{equation*}%
Thus, we transform the last equality to 
\begin{gather*}
\mathcal{B}_{n}\int \limits_{\omega }\partial _{t}G^{\ast
}(H(u_{0}))(x,0)\partial _{t}G^{\ast }(H(\theta))(x,0){dx} \\
= -\mathcal{B}_{n}\int \limits_{\omega ^{T}}(\partial _{tt}^{2}G^{\ast
}(H(u_{0}))\partial _{t}G^{\ast }(H(\theta)) + \partial _{t} G^{\ast
}(H(u_{0}))\partial _{tt}^{2}G^{\ast }(H(\theta))){dxdt} \\
+ \mathcal{B}_{n}\int \limits_{\omega }H(u_{0})(x,T) H(\theta)(x,T){dx}.
\end{gather*}%
Now, we put this into the original expression and get 
\begin{gather*}
\int \limits_{\omega ^{T}}(u_{0}-\mathcal{B}_{n}(\mathcal{B}%
_{n}+N^{-1})G^{\ast }(H(u_{0})))\theta {dxdt}= \\
=\int \limits_{\omega ^{T}}\partial _{tt}^{2}G^{\ast }(H(u_{0})) \partial
_{tt}^{2}G^{\ast }(H(\theta)){dxdt}+\mathcal{B}_{n}(\mathcal{B}%
_{n}+N^{-1})\int \limits_{\omega ^{T}}\partial _{t}G^{\ast }(H(u_{0}))
\partial _{t}G^{\ast }(H(\theta)){dxdt}- \\
- \mathcal{B}_{n}\int \limits_{\omega ^{T}}(\partial _{tt}^{2}G^{\ast
}(H(u_{0}))\partial _{t}G^{\ast }(H(\theta)) + \partial _{t} G^{\ast
}(H(u_{0}))\partial _{tt}^{2}G^{\ast }(H(\theta))){dxdt} \\
+\mathcal{B}_{n}\int \limits_{\omega }H(u_{0})(x,T) H(\theta)(x,T){dx}.
\end{gather*}%
We transform it in the following way 
\begin{gather*}
\int \limits_{\omega ^{T}}(u_{0}-\mathcal{B}_{n}(\mathcal{B}%
_{n}+N^{-1})G^{\ast }(H(u_{0})))\theta {dxdt} \\
= \int \limits_{\omega ^{T}}(\mathcal{B}_{n}\partial _{t}G^{\ast
}(H(u_{0}))-\partial _{tt}^{2}G^{\ast }(H(u_{0})))(\mathcal{B}_{n}\partial
_{t}G^{\ast }(H(\theta))-\partial _{tt}^{2}G^{\ast }(H(\theta))){dxdt} \\
+ N^{-1}\mathcal{B}_{n}\int \limits_{\omega ^{T}}\partial _{t}G^{\ast
}(H(u_{0})) \partial _{t}G^{\ast }(H(\theta)){dxdt} +\mathcal{B}_{n}\int
\limits_{\omega }H(u_{0})(x,T) H(\theta)(x,T){dx} \\
= \int \limits_{\omega ^{T}}(\partial _{t}H(u_{0})-N^{-1}\partial
_{t}G^{\ast }(H(\theta))) (\partial _{t}H(\theta)-N^{-1}\partial _{t}G^{\ast
}(H(\theta))){dxdt} \\
+ N^{-1}\mathcal{B}_{n}\int \limits_{\omega ^{T}}\partial _{t}G^{\ast
}(H(u_{0})) \partial _{t}G^{\ast }(H(\theta)){dxdt} +\mathcal{B}_{n}\int
\limits_{\omega }H(u_{0})(x,T) H(\theta)(x,T){dx}.
\end{gather*}
Using again the definition of operators $G^{\ast }$ and $H$, we have 
\begin{gather*}
\partial _{t}H(u_{0})-N^{-1}\partial _{t}G^{\ast }(H(u_{0}))=u_{0}-\mathcal{B%
}_{n}H(u_{0}), \\
\partial _{t}H(\theta)-N^{-1}\partial _{t}G^{\ast }(H(\theta))=u_{0}-%
\mathcal{B}_{n}H(\theta).
\end{gather*}%
Thus, we derive 
\begin{gather}
\int \limits_{\omega ^{T}}(u_{0}-\mathcal{B}_{n}(\mathcal{B}%
_{n}+N^{-1})G^{\ast }(H(u_{0})))\theta{dxdt}  \notag \\
=\int \limits_{\omega ^{T}}(u_{0}-\mathcal{B}_{n}H(u_{0}))(\theta - \mathcal{%
B}_{n}H(\theta)){dxdt} +  \notag \\
+ N^{-1}\mathcal{B}_{n}\int \limits_{\omega ^{T}}\partial _{t}G^{\ast
}(H(u_{0})) \partial _{t}G^{\ast }(H(\theta)){dxdt} +\mathcal{B}_{n}\int
\limits_{\omega }H(u_{0})(x,T) H(\theta)(x,T){dx}.  \label{theta and u rel 1}
\end{gather}

Substituting relations \eqref{theta and u rel 2} and \eqref{theta and u rel
1} into \eqref{nabla theta rel}, and then, putting it into \eqref{limit J
deriv}, we derive 
\begin{gather*}
J_{0}^{\prime }(v_{0})v=\mathcal{A}_{n}\mathcal{B}_{n}\int \limits_{\omega
^{T}}p_{0}v{dxdt}+N\mathcal{A}_{n}\mathcal{B}_{n}\int \limits_{\omega
^{T}}\partial _{t}v_{0}\partial _{t}v{dxdt} \\
+N\mathcal{A}_{n}\mathcal{B}_{n}^{2}(\mathcal{B}_{n}+N^{-1})\int
\limits_{\omega ^{T}}v_{0}v{dxdt}+N\mathcal{A}_{n}\mathcal{B}_{n}(\mathcal{B}%
_{n}+N^{-1})\int \limits_{\omega }v_{0}(x,T)v(x,T){dx}.
\end{gather*}%
Integrating by parts in the second integral, we get 
\begin{gather*}
J_{0}^{\prime }(v_{0})v=\mathcal{A}_{n}\mathcal{B}_{n}\int \limits_{\omega
^{T}}p_{0}v{dxdt}+N\mathcal{A}_{n}\mathcal{B}_{n}\int \limits_{\omega
}(\partial _{t}v_{0}(x,T)+(\mathcal{B}_{n}+N^{-1})v_{0}(x,T))v(x,T){dx} \\
+N\mathcal{A}_{n}\mathcal{B}_{n}\int \limits_{\omega ^{T}}(-\partial
_{tt}^{2}v_{0}+\mathcal{B}_{n}(\mathcal{B}_{n}+N^{-1})v_{0})v{dxdt}=0,
\end{gather*}%
for an arbitrary admissible function $v\in U_{ad}$. Hence, we get that $%
v_{0} $ must be a solution of the following problem 
\begin{equation*}
\left \{ 
\begin{array}{ll}
\partial _{tt}^{2}v_{0}-\mathcal{B}_{n}(\mathcal{B}%
_{n}+N^{-1})v_{0}=N^{-1}p_{0} &  \\ 
v_{0}(x,0)=0,\quad \partial _{t}v_{0}(x,T)+(\mathcal{B}%
_{n}+N^{-1})v_{0}(x,T)=0. & 
\end{array}%
\right.
\end{equation*}%
But, this is related to the problem satisfied by $H(G^{\ast }(-N^{-1}p_{0}))$%
. Due to the uniqueness of the solution of the problem we conclude that $%
v_{0}=-N^{-1}H(G^{\ast }(p_{0}))\chi _{\omega ^{T}}$.
\end{proof}

\bigskip

\begin{remark}
As in the papers \cite{DiPoShAttouch} and \cite{DiPoShRACSAM}, thanks to
Remark \ref{Rem target nonzero}, by simplifying the cost functional $%
J_{\varepsilon }$ as in \cite{DiPoShRACSAM}, by making the parameter $%
N\rightarrow 0$, it seems possible to show the approximate controllability
with final observation of solutions of the limit problem.
\end{remark}

\textbf{Acknowledgement. }The research of J.I. D\'{\i}az was partially
supported by the project PID2023-146754NB-I00 funded by
MCIU/AEI/10.13039/501100011033 and FEDER,\newline
EU.MCIU/AEI/10.13039/-501100011033/FEDER, EU.

\bigskip

\end{document}